\newtheorem{thm}{Theorem}[section]
\newtheorem{lem}[thm]{Lemma}
\newtheorem{cor}[thm]{Corollary}
\theoremstyle{definition}
\newtheorem{definition}[thm]{Definition}
\newtheorem{example}[thm]{Example}
\newcommand{\seq}[1]{\langle #1\rangle}
\newcommand{\Z}{\mathbb Z}
\newcommand{\N}{\mathbb N}
\begin{document}
\title[Limits and Transitivity in Free Groups]{Limit Sets and Internal Transitivity in Free Group Actions}

\author[K. Binder]{Kyle Binder}
\address[K. Binder]{Baylor University, Waco TX, 76798}
\email[K. Binder]{Kyle\_Binder@baylor.edu}
\author[J. Meddaugh]{Jonathan Meddaugh}
\address[J. Meddaugh]{Baylor University, Waco TX, 76798}
\email[J. Meddaugh]{Jonathan\_Meddaugh@baylor.edu}

%\date{} % Activate to display a given date or no date (if empty),
% otherwise the current date is printed
\subjclass[2010]{37B50, 37B10, 37B20, 54H20}
\keywords{shadowing, pseudo-orbit tracing property, topological dynamics}

\begin{abstract} It has been recently shown that, under appropriate hypotheses, the $\omega$-limit sets of a dynamical system are characterized by internal chain transitivity. In this paper, we examine generalizations of these ideas in the context of the action of a finitely generated free group or monoid. We give general definitions for several types of limit sets and analogous notions of internal transitivity. We then demonstrate that these limit sets are completely characterized by internal transitivity properties in shifts of finite type and general dynamical systems exhibiting a form of the shadowing property.
\end{abstract}

\maketitle

\section{Introduction}

The study of group and semi-group actions on compact metric spaces is a natural generalization of the study of traditional discrete dynamical systems. Unsurprisingly, as the groups of interest increase in complexity, basic results from the standard theory of topological dynamical systems (i.e $\Z$ and $\N$ actions) become difficult to generalize or fail entirely. What is particularly surprising, however, is just how quickly things begin to fall apart. Even in the case of the action of the group $\Z^d$ for $d\geq 2$, simple questions regarding entropy are difficult to answer even in the simplified context of shift spaces \cite{hochman1, hochman2}.

However, there are aspects of these actions that have been studied with some success. Of particular interest to this paper are generalizations of the known relationship between $\omega$-limit sets and internal chain transitivity, especially under the additional hypothesis of shadowing. In particular, it has long been known that $\omega$-limit sets are internally chain transitive \cite{Hirsch}, and that in certain categories of dynamical systems, the converse is also true including shifts of finite type, topologically hyperbolic maps, certain Julia sets of quadratic polynomials and Axiom A diffeomorphisms  \cite{BGKR-omega, BGOR-DCDS, BR-ToAppear, BMR-ToAppear, Bowen}.  More recently, Meddaugh and Raines demonstrated that under the assumption that the system has the \emph{shadowing property}, the collection of internally chain transitive sets is precisely the closure of the collection of $\omega$-limit sets in the Hausdorff topology \cite{MR}. Thus, in situations in which it is known, a priori, that the collection of $\omega$-limit sets is closed in this topology (interval maps are an important example of such a situation \cite{Blokh}), the shadowing property is sufficient to establish that internal chain transitivity coincides precisely with the property of being an $\omega$-limit set. This connection is further explored in \cite{GoodMeddaugh-ICT}, in which variations of the shadowing property are explored to more precisely characterize the necessary conditions under which this equivalence occurs.

There have been a number of attempts to extend these sorts of results to the broader context of group actions. In particular, several authors have considered the structure of limit sets for $\Z^d$ actions, specifically for subshifts of $\Sigma^{\Z^d}$ where $\Sigma$ is a finite alphabet. Among the first generalizations along theses lines is the work of Oprocha \cite{oprochalimitsets, oprochashadowingmulti} in which notions of shadowing and limit sets for these sorts of actions are explored. The connection between limit sets and internal transitivity was explored by Meddaugh and Raines \cite{MR-Zd}. An important aspect of this body of work is that for multi-dimensional shift spaces, there is no unique natural analog for forward iteration, so many types of limit sets arise, and correspondingly, many notions of internal transitivity. One other common aspect of these results is that the problem of \emph{completability} arises quite frequently. This is not surprising, as it is related to tiling problems, in which this issue is well-known problem  \cite{Berger}.

It is at this point that we propose that $\Z^d$ actions are not necessarily the most natural successors to $\Z$ actions in terms of complexity. In this paper, we examine the action of finitely generated free groups and monoids on compact metric spaces. We examine notions of limit sets and establish that there are internal transitivity properties that characterize them in the context of shifts of finite type. We then introduce a generalization of the notion of shadowing to these actions and establish that a subshift has shadowing precisely when it is a shift of finite type. This parallels the known result for $\Z$ actions as demonstrated by Walters \cite{WaltersFiniteType}. We then go on to demonstrate that under the assumption of shadowing, we witness analogs to the results of Meddaugh and Raines \cite{MR}, and, under an appropriate analog of \emph{asymptotic shadowing}, we recover generalizations of the characterization of limit sets in terms of internal transitivity as developed by Good and Meddaugh \cite{GoodMeddaugh-ICT}.

\section{Preliminaries}
Let $G$ be a group or monoid and $X$ a compact metric space. A continuous left $G$-action on $X$ is a function $f:G\times X\to X$ that satisfies the following conditions: for each $g\in G$, the function $f_g$ defined by $f_g(x)=f(g,x)$ is continuous, for the identity $e\in G$, $f_e$ is the identity on $X$, and for $g,h\in G$, $f_{gh}=f_h\circ f_g$.

For $n\in\mathbb{N} $, let $ F $ denote the free group on the $n$ generators $\{s_0,\ldots s_{n-1}\}$ and $ S=\{s_0,\ldots s_{n-1},s_0^{-1},\ldots s_{n-1}^{-1}\} $ be the set of all generators and their inverses. The set of reduced words of $F$ is the set $W=\{e\}\cup\{w_0w_1\cdots w_k\in S^\omega: w_i\neq w_{i+1}^{-1} \textrm{ for } i<k\}$ with $e$
denoting the empty word. Each element of $F$ has a unique representative in $W$ and the group operation of $F$ is realized in $W$ by concatenation followed by cancellation.

We define the \emph{length} of an element $ u \in F $ to be the number of letters in its reduced representation. We denote this by $ \left| u \right| $.
The identity of $ F $ is associated with the empty word $ e $ and has length $ 0 $.
Finally, we will say for two elements of $ F $ with reduced representations $ u = u_{0} \dots u_{n} $ and $ v = v_{0} \dots v_{n+k} $ , that $ u $ is a  \emph{prefix} of
$ v $ if $ u_{i} = v_{i} $ for $ 0 \leq i \leq n $. Additionally, we will take the identity $ e $ to be a prefix of every element of $ F $.

It will also be necessary to consider the infinite words of $F$. In particular, we define the set $W_\infty=\{\seq{w_i}_{i\in\omega}: w_i\neq w_{i+1}^{-1} \textrm{ for } i\in\omega\}$. Let $ u, w $ be two words (either finite or infinite) with length at least $ n $. We say $ u|_{n} = w|_{n} $ if $ u_{i} = w_{i} $ for $ 0 \leq i < n $.

For $ n \in \mathbb{N} $, let $ H $ denote the free monoid on the $ n $ generators $ P = \{s_0,\ldots s_{n-1}\}$.
In this case, $H$ coincides with the set of words of $\{e\}\cup\{w_0w_1\cdots w_k \in P^{\omega} \}$ with $ e $ denoting the empty word as
every element of $ H $ has a unique representation, and the binary operation is simply concatenation.
The collection of infinite words of $ H $ is the set $ H_{\infty} = \{\seq{w_i}_{i\in\omega}: w_{i} \in P\} $.
The length of elements, prefixes, and restrictions $ w_{|n} $ are defined the same as in the free group case.

For the sake of generality, we take $ G $ be either a free group or monoid with $ W $ the set of words,
$ W_{\infty} $ the set of infinite words, and $ S $ the set of generators (with inverses in the
group case).

For a finite alphabet $ \mathcal{A} $, \emph{the full shift of $ \mathcal{A} $ over $ G $} (denoted $ \mathcal{A}^{G} $) is the set
of all functions $ x: G \rightarrow \mathcal{A} $. There is a natural $G$-action $\sigma$ on $\mathcal A^{G}$ defined as follows.
For every $ s \in S $ there is an associated \emph{shift map} $ \sigma_{s}: \mathcal{A}^{G} \rightarrow \mathcal{A}^{G} $ defined by
$ \sigma_{s}(x)(u) = x(su) $.
For $ v = v_{0} \dots v_{n} \in G $, we define $ \sigma_{v}(x) = \sigma_{v_{n}} \circ \dots \circ \sigma_{v_{0}}(x) $. Thus $ \sigma_{v}(x)(u) = x(vu) $.

For fixed $ G $, define $ \Sigma^{n} = \left\{u \in G : \left|u \right| < n \right\} $.
We can place a metric on $ \mathcal{A}^{G} $ defined by $ d(x,y) = \inf\left( \left\{ 2^{-n} : x|_{\Sigma^{n}} = y|_{\Sigma^{n}}  \right\}\cup\{1\}\right) $.
It is easy to see that under the topology induced by this metric, $\mathcal A^{G}$ is compact and the action $\sigma$ is continuous. In particular, if $d(x,y) = 2^{-n} $ then $ d(\sigma_{i}(x), \sigma_{i}(y)) \leq 2^{-n + 1} $.
Furthermore, for $ u \in G $, if $ d(x,y) = 2^{-n} $ then $ d(\sigma_{u}(x), \sigma_{u}(y)) \leq \min \left\{ 2^{-m + \left| u \right|}, 1 \right\} $.

An \emph{n-block} is a function $ B_{n}: \Sigma^{n} \rightarrow \mathcal{A} $.
An element $ x \in \mathcal{A}^{G} $ is said to contain $ B_{n} $ if there exists a $ u \in G $ such that $ \sigma_{u}(x)|_{\Sigma^{n}} = B_{n} $.
Let $ \mathcal{F} $ be a collection of m-blocks where m is allowed to range over the integers.
We can create a subspace of $ \mathcal{A}^{G} $ defined as $ X_{\mathcal{F}} = \left\{x \in \mathcal{A}^{G} : x \; \textrm{does not contain any} \; B \in \mathcal{F} \right\} $.
$ X_{\mathcal{F}} $ is compact and invariant under the shift maps.
In this case, $ \mathcal{F} $ is called a \emph{set of forbidden blocks}.
Any invariant and compact subspace of $ \mathcal{A}^{G} $ can be expressed as $ X_{\mathcal{F}} $ for some set $ \mathcal{F} $.
This is called a \emph{shift space}.
In the case that $ \mathcal{F} $ is finite, the shift space is a \emph{shift of finite type} (SFT).
If $ M $ is the maximal integer such that some $ B \in \mathcal{F} $ is an M-block, $ Y $ is called an \emph{M-step shift of finite type}.
In this case, we can assume without loss of generality that every element of $ \mathcal{F} $ is an M-block.

\begin{figure}
	\centering

	\begin{tikzpicture}[scale = .3]
	\draw (0,0) -- (0,(4) -- (0,6);
	\draw[dotted] (0,6) -- (0,7.25);
	\draw (0,0) -- (4,0) -- (6,0);
	\draw[dotted] (6,0) -- (7.25,0);
	\draw (0,0) -- (0,-4) -- (0, -6);
	\draw[dotted] (0,-6) -- (0, -7.25);
	\draw (0,0) -- (-4,0) -- (-6,0);
	\draw[dotted] (-6,0) -- (-7.25, 0);
	\draw (-2,4) -- (0,4) -- (2, 4);
	\draw (-2,-4) -- (0,-4) -- (2, -4);
	\draw (4,2) -- (4,0) -- (4, -2);
	\draw (-4,2) -- (-4,0) -- (-4, -2);
	\draw[dotted] (1.25,6) -- (0,6) -- (-1.25,6);
	\draw[dotted] (1.25,-6) -- (0,-6) -- (-1.25,-6);
	\draw[dotted] (6,1.25) -- (6,0) -- (6,-1.25);
	\draw[dotted] (-6,1.25) -- (-6,0) -- (-6,-1.25);
	\draw[dotted] (2.25,4) -- (2,4);
	\draw[dotted] (2.25,-4) -- (2,-4);
	\draw[dotted] (2, 2.25) -- (2,4) -- (2,5.25);
	\draw[dotted] (2, -2.25) -- (2,-4) -- (2,-5.25);
	\draw[dotted] (-2, 2.25) -- (-2,4) -- (-2,5.25);
	\draw[dotted] (-2, -2.25) -- (-2,-4) -- (-2,-5.25);
	\draw[dotted] (4, 2) -- (4,2.25);
	\draw[dotted] (-4, 2) -- (-4,3.25);
	\draw[dotted] (4, -2) -- (4,-3.25);
	\draw[dotted] (-4, -2) -- (-4,-3.25);
	\draw[dotted] (5.25,2) -- (4,2) -- (2.25,2);
	\draw[dotted] (-5.25,2) -- (-4,2) -- (-2.25,2);
	\draw[dotted] (5.25,-2) -- (4,-2) -- (2.25,-2);
	\draw[dotted] (-5.25,-2) -- (-4,-2) -- (-2.25,-2);
	\draw[dotted] (-3.25,-4) -- (-2,-4);
	\draw[dotted] (-3.25,4) -- (-2,4);

	\node at (0,0) [fill=white] {\tiny 0};
	\node at (4,0) [fill = white]{\tiny 1};
	\node at (-4,0) [fill = white]{\tiny 0};
	\node at (0,4) [fill = white]{\tiny 0};
	\node at (0,-4) [fill = white]{\tiny 1};
	\node at (2,4) [fill = white]{\tiny 0};
	\node at (2,-4) [fill = white]{\tiny 0};
	\node at (-2,4) [fill = white]{\tiny 1};
	\node at (-2,-4) [fill = white]{\tiny 1};
	\node at (4,2) [fill = white]{\tiny 1};
	\node at (4,-2) [fill = white]{\tiny 1};
	\node at (-4,2) [fill = white]{\tiny 1};
	\node at (-4,-2) [fill = white]{\tiny 1};
	\node at (0,6) [fill = white]{\tiny 1};
	\node at (0,-6) [fill = white]{\tiny 1};
	\node at (6,0) [fill = white]{\tiny 1};
	\node at (-6,0) [fill = white]{\tiny 1};
	\end{tikzpicture}
	\caption{A representation of an element of $ \left\{0,1 \right\}^{F_{2}} $ with the middle 3-block filled in.} \label{fig:M1}

\end{figure}

In a metric space $ X $, the distance between a point $ x \in X $ and closed subset $ A \subseteq X $ we define as
$ d(x,A) = \inf_{a \in A}d(x,a) $. This gives rise to the \emph{Hausdorff distance} between two closed subsets $ A, B \subseteq X $
defined as
\[ d_{H}(A,B) = \max\{\sup_{a \in A}d(a,B), \sup_{b \in B}d(b,A)\} .\]

\section{$\omega$-limit Sets for Group Actions}\label{Omega}

In the context of continuous group or monoid actions on a compact metric space $X$, there is no clear \emph{best} analogue to the notion of an $\omega$-limit set. Principal to this ambiguity is that there is more than one `future' to consider. This problem is especially apparent in the action of groups, but is still present even if we restrict our attention to monoids. In \cite{MR-Zd} and \cite{oprochalimitsets}, the authors explore some of these notions in the context of $\mathbb Z^d$ actions. Perhaps the most general notion of limit sets in this context are the semigroup limit sets as defined by Barros and Souza \cite{Barros2010}.

\begin{definition}
	Let $S$ be a semigroup acting continuously on $X$ and let $\mathcal F$ be a family of subsets of $S$. For $x\in X$, we define
	\[\omega(x,\mathcal F)=\bigcap_{A\in\mathcal F}\overline{\{f_s(x) : s\in A\}}\]
\end{definition}

It is immediate that $\omega(x,\mathcal F)$ is a compact subset of $X$. If $\mathcal F$ is taken to be a filter base (for any $A,B\in\mathcal F$, there exists $C\in\mathcal F$ with $C\subseteq A\cap B$), then $\omega(x,\mathcal F)$ is nonempty, and under appropriate assumptions, it is also invariant \cite{Barros2012}.

For the purposes of the following sections, we consider only the action of free groups and free monoids with finitely many generators. In particular, let $G$ be the free group or monoid on $n$ generators, with $S$ the set of generators (and inverses in the case $ G $ is a free group) and let $G$ act continuously on the compact metric space $X$ by the assignment $u\in G\mapsto f_u:X\to X$. We also take $W$ to be the set of reduced words in $G$, and $W_\infty$ the set of infinite reduced words.

In this setting, there are a number of specific limit set types that warrant particular attention.

The first of these is acquired by taking $\mathcal{F}$ to be the family of subsets of words with minimum lengths.

\begin{definition}
	For $ x \in X $, $ \omega(x) = \bigcap_{n \in \omega} \overline {\left\{ f_{u}(x) : \left| u \right| > n \right\}} $.
\end{definition}

The equivalent metric formulation is given in the following lemma.

\begin{lem}
	$ \omega(x) = \left\{y \in X : \forall \, n \in \mathbb{N} \; \; \exists \, \left| u_{n} \right| > n  \; \; s.t. \; \;  d(f_{u_{n}}(x), y) < \frac{1}{n}  \right\} $.
\end{lem}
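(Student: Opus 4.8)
The plan is to prove the two inclusions separately, in each case translating the topological statement ``$y\in\overline{A_m}$'', where $A_m=\{f_u(x):|u|>m\}$, into a statement about words $u$ that bring $x$ close to $y$ under $f$. Note first the obvious monotonicity $A_{m+1}\subseteq A_m$, so the closures $\overline{A_m}$ form a nested decreasing sequence of compact sets, though we will not need anything beyond the definition of closure.

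For the inclusion $\omega(x)\subseteq\{y:\forall n\,\exists |u_n|>n\ \text{s.t.}\ d(f_{u_n}(x),y)<1/n\}$, fix $y\in\omega(x)$ and $n\in\mathbb N$ (it suffices to treat $n\geq 1$). By definition $y\in\overline{A_n}$, so every open neighborhood of $y$ meets $A_n$; in particular the ball $B(y,1/n)$ contains some point $f_{u_n}(x)$ with $|u_n|>n$. This $u_n$ witnesses the required condition, and letting $n$ range over $\mathbb N$ gives the claim.

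For the reverse inclusion, suppose $y$ lies in the right-hand set and fix an arbitrary $m\in\omega$; we show $y\in\overline{A_m}$. Given $\varepsilon>0$, choose $n>m$ with $1/n<\varepsilon$. By hypothesis there is $u_n$ with $|u_n|>n>m$ and $d(f_{u_n}(x),y)<1/n<\varepsilon$. Since $|u_n|>m$ we have $f_{u_n}(x)\in A_m$, so $B(y,\varepsilon)\cap A_m\neq\emptyset$; as $\varepsilon$ was arbitrary, $y\in\overline{A_m}$. Since $m$ was arbitrary, $y\in\bigcap_{m\in\omega}\overline{A_m}=\omega(x)$.

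I do not anticipate a genuine obstacle here: the argument is a routine unwinding of the definitions of closure and of $\omega(x)$, and it uses neither compactness of $X$ nor continuity of the action. The only minor points to be careful about are the degenerate small-$n$ behavior (handled by restricting to $n\geq 1$, the case $n=0$ being vacuous) and the implicit nonemptiness of the relevant sets, which is supplied automatically — by $y\in\overline{A_n}$ in the first direction and by the hypothesis itself in the second.
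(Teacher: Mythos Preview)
Your proof is correct and follows essentially the same approach as the paper's: both arguments prove the two inclusions separately by unwinding the definition of closure of the sets $A_n=\{f_u(x):|u|>n\}$. The only cosmetic difference is that the paper phrases closure via convergent sequences while you use the equivalent ``every ball meets the set'' characterization; the logical content is identical.
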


\begin{proof}
	Let $ y \in \omega(x) $. For $ n \in \mathbb{N} $ there exists a sequence $ \left\{ u_{i} \right\}_{i \in \mathbb{N}}  \subseteq G $ with $ u_{i} > n $
	such that $ \left\{ f_{u_{i}}(x) \right\} $ converges to $ y $, as $ y \in \overline{ \left\{ f_{u}(x): \left| u \right| > n \right\} }$.
	Thus we can choose $ \left| u_{n} \right| > n $ with $ d (f_{u_{n}}(x), y) < \frac{1}{n} $.

	Now let $ y \in \left\{ y \in X : \forall \, n \in \mathbb{N} \; \; \exists \, \left| u_{n} \right| > n  \; \; s.t. \; \;  d(f_{u_{n}}(x), y) < \frac{1}{n}  \right\} $.
	For $ n \in \mathbb{N} $ choose $ \left| u_{n} \right| > n $ with $ d(f_{u_{n}}(x), y) < \frac{1}{n}  $.
	Thus, for $ m \geq n $, $f_{u_{m}}(x) \in \left\{f_{u}(x):\left| u \right| > n \right\} $, and $ \left\{ f_{u_{i}}(x) \right\}_{i = m}^{\infty} $ converges to $ y $.
	Therefore $ y \in \overline{\left\{f_{u}(x): \left| u \right| > n \right\}} $.
\end{proof}

\begin{lem}
	$ \omega(x) $ is invariant.
\end{lem}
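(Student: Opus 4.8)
The plan is to use the metric description of $\omega(x)$ from the preceding lemma and to show that $f_s(\omega(x)) \subseteq \omega(x)$ for every generator $s \in S$; in the free group case this will then be promoted to equality, which (as with the shift maps on $X_{\mathcal F}$) is the relevant notion of invariance here.

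First I would fix $s \in S$ and a point $y \in \omega(x)$, and verify that $f_s(y)$ meets the defining condition for membership in $\omega(x)$: given $n \in \mathbb{N}$, I need $v \in G$ with $|v| > n$ and $d(f_v(x), f_s(y)) < 1/n$. By continuity of $f_s$ at $y$ there is $\delta > 0$ with $d(z,y) < \delta \Rightarrow d(f_s(z), f_s(y)) < 1/n$; choose $m \in \mathbb{N}$ with $1/m < \delta$ and $m \geq n+2$, and use $y \in \omega(x)$ to obtain $u \in G$ with $|u| > m$ and $d(f_u(x), y) < 1/m < \delta$. Then $d(f_s(f_u(x)), f_s(y)) < 1/n$, and since $f_s \circ f_u = f_{us}$ by the composition rule $f_{gh} = f_h \circ f_g$, the element $v = us$ has $d(f_v(x), f_s(y)) < 1/n$.

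The only step requiring care — and thus the main obstacle, though a mild one — is the length estimate $|v| > n$. In the monoid case $|us| = |u|+1 > m > n$, so there is nothing to check; in the free group case cancellation occurs exactly when $s$ is the inverse of the final letter of $u$, giving $|us| = |u| - 1$, but then $|u| > m \geq n+2$ still forces $|us| \geq n+1 > n$. Hence $f_s(y) \in \omega(x)$ in all cases, and since $s \in S$ and $y \in \omega(x)$ were arbitrary, $f_s(\omega(x)) \subseteq \omega(x)$ for every generator.

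Finally, when $G$ is a free group, $s^{-1} \in S$ as well, so the inclusion just proved also gives $f_{s^{-1}}(\omega(x)) \subseteq \omega(x)$; applying $f_s$ and using $f_s \circ f_{s^{-1}} = f_{s^{-1}s} = f_e = \mathrm{id}_X$ yields $\omega(x) \subseteq f_s(\omega(x))$, hence $f_s(\omega(x)) = \omega(x)$. I would also remark that, even for the monoid, a compactness argument applied to a sequence of words realizing a fixed $y \in \omega(x)$ (passing to a subsequence with constant final generator and extracting a convergent subsequence of the truncated orbit points) shows $\omega(x) \subseteq \bigcup_{s \in S} f_s(\omega(x))$.
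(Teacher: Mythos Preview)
Your argument is correct and follows essentially the same route as the paper's proof: pick $y\in\omega(x)$ and $s\in S$, use (uniform) continuity of $f_s$ together with the metric description of $\omega(x)$ to produce words $u$ of large length with $f_{us}(x)$ close to $f_s(y)$, and conclude $f_s(y)\in\omega(x)$. You are in fact more careful than the paper, which glosses over the possible one-letter cancellation in $|us|$, and your final paragraph (promoting the inclusion to an equality in the group case, and the surjectivity remark for monoids) goes beyond what the paper actually proves, since the paper's lemma and its proof only establish $f_i(\omega(x))\subseteq\omega(x)$.
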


\begin{proof}
	Let $ y \in \omega(x) $ and $ i \in S $. By the uniform continuity of $ f_{i} $, there exists $ \delta_{n} > 0 $ such that $ d(f_{i}(x), f_{i}(y)) < \frac{1}{n} $ if $ d(x, y) < \delta_{n} $.
	As $ y \in \omega(x) $, there is a sequence $ \left\{ u_{j} \right\} $  increasing in length such that $ d(y, f_{u_{j}}(x)) < \delta_{j} $.
	Hence, $ d(f_{i}(y), f_{u_{j}i}(x)) < \frac{1}{j} $ for all $ j $ so $ y \in \omega(x) $.
\end{proof}

This notion of limit set captures all possible `futures' of $x$ under the action of $G$, completely discarding any notion of direction. In contrast, the following type of limit sense has a very strong sense of directionality, capturing only those behaviors of the orbit of $x$ realized along a particular trajectory.

\begin{definition}
	For $ x \in X $ and $ w \in W_{\infty} $, $ \omega_{w}(x) = \bigcap_{n \in N} \overline {\left\{ f_{w_{0} \dots w_{k}}(x) : k > n \right\}}$.
\end{definition}

Here, we take the family $\mathcal{F}$ to be the collection of finite prefixes of the infinite word $w$. As expected, $\omega_w(x)$ is compact and non-empty. And, while it is not necessarily invariant, it does still have a limited level of invariance.

\begin{thm}
	In the case of a free group action, for $ y \in \omega_{w}(x) $ there exists $ i \neq j \in S $ such that $ f_{i}(y), \; f_{j}(y) \in \omega_{w}(x) $.
\end{thm}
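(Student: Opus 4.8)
The plan is to obtain the two required points of $\omega_w(x)$ as images $f_i(y)$ and $f_j(y)$ under single generators, using only continuity of the action together with the no-backtracking property built into $W_\infty$.

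First I would rewrite the hypothesis in sequence form: setting $v_k = w_0w_1\cdots w_k$ for the prefixes of $w$, a point $y\in\omega_w(x)$ is precisely a limit $y=\lim_{\ell}f_{v_{k_\ell}}(x)$ along some strictly increasing index sequence $(k_\ell)_{\ell\in\mathbb N}$. The engine of the argument is the action identity $f_s\circ f_{v_k}=f_{v_ks}$ (an instance of the axiom $f_{gh}=f_h\circ f_g$): applying a generator $s$ to the orbit point $f_{v_k}(x)$ just appends $s$ to the word. So I want generators $s$ for which the elements $v_{k_\ell}s$ keep having length tending to infinity; then $f_s(y)=\lim_\ell f_{v_{k_\ell}s}(x)$ must land in every tail set $\overline{\{f_{v_k}(x):k>n\}}$ whose intersection defines $\omega_w(x)$. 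For each $\ell$ there are two obvious candidates: the forward letter $s=w_{k_\ell+1}$, which turns $v_{k_\ell}$ into the longer prefix $v_{k_\ell+1}$; and the ``undo'' letter $s=w_{k_\ell}^{-1}$, which cancels the last letter and produces $v_{k_\ell-1}$, still of length $k_\ell$ and hence still growing.

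Both candidates depend on $\ell$, but $S$ is finite, so the next step is a pigeonhole: pass to a sub-subsequence along which the \emph{ordered pair} $(w_{k_\ell},w_{k_\ell+1})$ equals a fixed $(a,b)\in S\times S$. I would then take $i:=b$ and $j:=a^{-1}$; these lie in $S$ (here is where being a free \emph{group} is used), and their distinctness $i\ne j$ is exactly the statement $b\ne a^{-1}$, i.e.\ $w_{k_\ell+1}\ne(w_{k_\ell})^{-1}$, which holds because $w\in W_\infty$ is reduced. The rest is routine: continuity of $f_i$ and $f_j$ together with $v_{k_\ell}b=v_{k_\ell+1}$ (no cancellation) and $v_{k_\ell}a^{-1}=v_{k_\ell-1}$ (one cancellation), plus $k_\ell+1\to\infty$ and $k_\ell-1\to\infty$, yield $f_i(y),f_j(y)\in\omega_w(x)$; at the outset one drops finitely many terms so that $k_\ell\ge1$ and $v_{k_\ell-1}$ is a genuine prefix.

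I expect the only real obstacle to be guaranteeing that the two generators are distinct. That is what forces the pigeonhole to be applied to the ordered pair $(w_{k_\ell},w_{k_\ell+1})$ simultaneously rather than to its coordinates separately, and it is the one point at which the defining no-backtracking condition of $W_\infty$ is invoked. It is also the step that explains the ``free group'' hypothesis: the backward generator $a^{-1}$ is available only because inverses lie in $S$, and for a free monoid action one should expect only the single forward generator to survive this argument.
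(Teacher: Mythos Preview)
Your proposal is correct and follows essentially the same approach as the paper: both arguments select a subsequence of prefix indices, stabilize the ordered pair $(w_{k_\ell},w_{k_\ell+1})$ by pigeonhole over the finite set $S$, take $i=w_{k_\ell+1}$ and $j=w_{k_\ell}^{-1}$, invoke the reduced-word condition for $i\neq j$, and use continuity together with $v_{k_\ell}i=v_{k_\ell+1}$ and $v_{k_\ell}j=v_{k_\ell-1}$ to conclude. The only cosmetic difference is that the paper performs the pigeonhole in two nested steps (first fixing $i$, then $j$ among those indices) rather than on the pair at once.
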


\begin{proof}
	Let $ y \in \omega_{w}(x) $ be given.
	By the uniform continuity of the maps $ f_{i}$, for every $ n \in \mathbb{N} $ there is $ k_{n} $ such that
	if $ d(x,z) < \frac{1}{k_{n}} $, then $ d(f_{i}(x), f_{i}(z)) < \frac{1}{n} $ for all $ i \in S $.
	For $ n \in \mathbb{N} $ choose $ m_{n} > n + 1 $ such that $ d(f_{w_{0} \dots w_{m_{n}}}(x), y) < \frac{1}{k_{m}} $.
	Choose $ i \in S $ such that $ i = w_{m_{n} + 1} $ for infinitely many $ n $.
	We can then choose $ j $ so $ j = w_{m_{n}}^{-1} $ for infinitely many $ n $ with
	$ w_{m_{n} + 1} = i $.
	Thus $ i \neq j $.
	By passing to a subsequence if necessary, we can assume $ w_{m_{n}} = i^{-1}$ and $ w_{m_{n}+1}= j$.
	Notice then that $ d(f_{w_{0} \dots w_{l_{n}}}(x), y) < \frac{1}{k_{n}} $.
	Thus $ d(f_{j}(f_{w_{0} \dots w_{m_{n}}}(x)), f_{j}(y)) = d(f_{w_{0} \dots w_{m_{n} - 1}}(x), f_{j}(y)) < \frac{1}{n} $.
	Similarly, $ d(f_{w_{0} \dots w_{m_{n} + 1}}(x), f_{i}(y)) < \frac{1}{n} $.
	As $ m_{n} - 1 > n $, $ f_{i}(y) $, $ f_{j}(y) \in \omega_{w}(x) $.
\end{proof}

Because there are not inverses in a free monoid, we obtain a slightly weaker result in this
context.

\begin{cor}
	In the case of a free monoid action, for $ y \in \omega_{w}(x) $ there exists $ i \in S $ such that $ f_{i}(y) \in \omega_{w}(x) $ and there exists $z\in\omega_w(x)$ and $j\in S$ with $y=f_j(z)$.
\end{cor}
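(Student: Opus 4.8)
The plan is to follow the proof of the preceding theorem, replacing the one place where it uses inverses (to run the orbit ``backwards'') by a compactness argument in $X$. Recall that since $G$ is a free monoid, $S=P$ is finite and an infinite word $w\in W_\infty$ carries no adjacency restriction, so that $w_0\dots w_k\,s$ is a prefix of a legitimate infinite word for every $s\in S$.

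For the first conclusion, fix $y\in\omega_w(x)$. By the definition of $\omega_w(x)$ together with the metrizability of $X$, there is a sequence $m_n\to\infty$ with $f_{w_0\dots w_{m_n}}(x)\to y$. Since $S$ is finite, the pigeonhole principle gives a single $i\in S$ with $w_{m_n+1}=i$ for infinitely many $n$; passing to that subsequence, assume $w_{m_n+1}=i$ for all $n$. Using $f_{uv}=f_v\circ f_u$ we obtain $f_{w_0\dots w_{m_n+1}}(x)=f_i\bigl(f_{w_0\dots w_{m_n}}(x)\bigr)$, which by continuity of $f_i$ converges to $f_i(y)$. As $m_n+1\to\infty$, this displays $f_i(y)$ as a limit point of $\{f_{w_0\dots w_k}(x):k>N\}$ for every $N$, hence $f_i(y)\in\omega_w(x)$.

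For the second conclusion, take again a sequence $m_n\to\infty$ with $f_{w_0\dots w_{m_n}}(x)\to y$, and assume $m_n\geq 1$. Pigeonhole produces a fixed $j\in S$ with $w_{m_n}=j$ along a subsequence. Put $z_n=f_{w_0\dots w_{m_n-1}}(x)$, so that $f_j(z_n)=f_{w_0\dots w_{m_n}}(x)\to y$. By compactness of $X$, after passing to a further subsequence $z_n\to z$ for some $z\in X$; since $m_n-1\to\infty$ we get $z\in\omega_w(x)$, and continuity of $f_j$ gives $y=\lim_n f_j(z_n)=f_j(z)$, which is the asserted decomposition.

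There is no serious obstacle here; the only points that need care are the bookkeeping of the nested subsequence extractions---harmless, since the two halves are argued from independent sequences converging to $y$---and the indexing convention $f_{uv}=f_v\circ f_u$, which is precisely what forbids restating the second conclusion in the simpler form $f_j(y)\in\omega_w(x)$ once inverses are no longer available.
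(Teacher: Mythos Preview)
Your proof is correct, and the paper itself omits the proof of this corollary, leaving it as an exercise in adapting the preceding theorem. Your adaptation is exactly the expected one: the forward step (finding $i$) is identical to the group case, and for the backward step you replace the unavailable application of $f_{w_{m_n}^{-1}}$ by extracting a convergent subsequence of the preimages $z_n=f_{w_0\dots w_{m_n-1}}(x)$ via compactness and then using continuity of $f_j$.
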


A similar limit set with a less rigid, but still quite strong, notion of directionality is the following. Here, the family $\mathcal F$ is the collection of subsets of words that share prefixes of specified lengths with $w$.

\begin{definition}
	For $ x \in X $ and $ w \in W_{\infty} $, $ \omega_{F_{w}}(x) =  \bigcap_{n \in N} \overline {\left\{ f_{u}(x) : u|_{n} = w|_{n} \right\} } $
\end{definition}

Again this is compact and non-empty and has an equivalent analytic definition.

\begin{lem}
	$ \omega_{F_{w}}(x) = \left\{y \in X : \forall \; m \; \exists \, u \in G \; s.t. \; d(f_{u}(x), y)) < \frac{1}{m} \textrm{ and } u_{|m} = w_{|m} \right\}  $.
\end{lem}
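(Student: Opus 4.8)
The plan is to prove the two set inclusions separately, mirroring the argument used above for the metric characterization of $\omega(x)$. Write $A_n = \left\{ f_u(x) : u \in G,\ |u| \geq n,\ u|_n = w|_n \right\}$, so that $\omega_{F_w}(x) = \bigcap_{n \in \mathbb{N}} \overline{A_n}$, and let $R$ denote the set on the right-hand side of the claimed identity. Note that each $A_n$ is nonempty, since the finite word $w_0 \cdots w_{n-1}$ is an admissible choice of $u$, and that compactness and non-emptiness of $\omega_{F_w}(x)$ have already been observed.

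For the inclusion $\omega_{F_w}(x) \subseteq R$: fix $y \in \omega_{F_w}(x)$ and $m \in \mathbb{N}$. Since $y \in \overline{A_m}$, there is a sequence in $A_m$ converging to $y$, and in particular some $u \in G$ with $u|_m = w|_m$ and $d(f_u(x), y) < \frac{1}{m}$. As $m$ was arbitrary, $y \in R$.

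For the reverse inclusion $R \subseteq \omega_{F_w}(x)$: fix $y \in R$ and $n \in \mathbb{N}$; it suffices to show $y \in \overline{A_n}$. For each $m \geq n$ choose $u_m \in G$ with $u_m|_m = w|_m$ and $d(f_{u_m}(x), y) < \frac{1}{m}$. Since $m \geq n$, the word $u_m$ has length at least $m \geq n$ and its first $n$ letters coincide with those of $w$, so $u_m|_n = w|_n$ and hence $f_{u_m}(x) \in A_n$. The sequence $\seq{f_{u_m}(x)}_{m \geq n}$ converges to $y$, so $y \in \overline{A_n}$. As $n$ was arbitrary, $y \in \bigcap_{n \in \mathbb{N}} \overline{A_n} = \omega_{F_w}(x)$.

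The argument is entirely routine, and no continuity or compactness input is needed beyond what has already been recorded. The only point requiring a moment of care is the observation that length-$m$ prefix agreement with $w$ entails length-$n$ prefix agreement whenever $n \leq m$, which is immediate from the definition of $u|_n$, together with keeping track of the implicit requirement $|u| \geq n$ built into the notation $u|_n = w|_n$.
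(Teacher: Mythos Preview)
Your proof is correct and is precisely the natural argument: the paper omits the proof of this lemma, but your two-inclusion argument directly parallels the proof the paper gives for the analogous metric characterization of $\omega(x)$, and this is clearly what was intended.
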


A principal benefit of this looser notion of directionality is that we recover invariance.

\begin{thm}
	$ \omega_{F_{w}}(x) $ is invariant.
\end{thm}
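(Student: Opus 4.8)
The plan is to establish that $f_i\bigl(\omega_{F_w}(x)\bigr)\subseteq\omega_{F_w}(x)$ for every $i\in S$. In the free monoid case this inclusion is the appropriate statement of invariance; in the free group case, applying the inclusion to both $i$ and $i^{-1}$ and using $f_i\circ f_{i^{-1}}=f_e=\mathrm{id}$ upgrades it to the equality $f_i\bigl(\omega_{F_w}(x)\bigr)=\omega_{F_w}(x)$. Throughout I would work with the analytic description of $\omega_{F_w}(x)$ from the preceding lemma: $y\in\omega_{F_w}(x)$ iff for every $m$ there is $u\in G$ with $d(f_u(x),y)<\frac1m$ and $u|_m=w|_m$.

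So fix $y\in\omega_{F_w}(x)$ and $i\in S$; I must show $f_i(y)\in\omega_{F_w}(x)$, that is, for each $m$ I must produce $v\in G$ with $d(f_v(x),f_i(y))<\frac1m$ and $v|_m=w|_m$. Given $m$, uniform continuity of $f_i$ on the compact space $X$ yields $\delta>0$ with $d(z,z')<\delta\Rightarrow d(f_i(z),f_i(z'))<\frac1m$. Choose $m'>m$ with $\frac1{m'}<\delta$, and use $y\in\omega_{F_w}(x)$ to pick $u\in G$ with $d(f_u(x),y)<\frac1{m'}<\delta$ and $u|_{m'}=w|_{m'}$; note $|u|\ge m'>m$. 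Put $v=ui$. Since $f_{ui}=f_i\circ f_u$, we have $f_v(x)=f_i(f_u(x))$ and therefore $d(f_v(x),f_i(y))<\frac1m$, which is the first requirement.

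The remaining point, and the only place any care is needed, is verifying $v|_m=w|_m$, where one must track a possible cancellation in the reduced word $ui$. Writing $u=u_0\cdots u_{|u|-1}$ in reduced form: if $u_{|u|-1}\neq i^{-1}$ (automatic in the monoid case) then $ui$ is already reduced and $v_j=u_j$ for $j<|u|$; if $u_{|u|-1}=i^{-1}$ (only possible for a free group) then exactly one cancellation occurs, $v$ has reduced form $u_0\cdots u_{|u|-2}$ of length $|u|-1\ge m'-1\ge m$, and $v_j=u_j$ for $j<|u|-1$. In either case $|v|\ge m$ and the first $m$ letters of $v$ equal the first $m$ letters of $u$, which agree with those of $w$ because $u|_{m'}=w|_{m'}$ and $m\le m'$. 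Hence $v|_m=w|_m$, so $f_i(y)\in\omega_{F_w}(x)$, and invariance follows.

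I do not expect a genuine obstacle here: the argument is essentially the uniform-continuity argument used above for the invariance of $\omega(x)$, the one new ingredient being the reduced-word cancellation. That is handled precisely by demanding that $u$ share a prefix of length $m'>m$ with $w$ rather than merely a prefix of length $m$, so that absorbing one letter at the end of $u$ still leaves its length-$m$ prefix, and hence $w$'s, intact.
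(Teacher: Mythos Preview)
Your proof is correct and follows essentially the same approach as the paper: use uniform continuity of $f_i$ to control $d(f_{ui}(x),f_i(y))$, and pick $u$ agreeing with $w$ on a prefix of length strictly greater than $m$ so that the single possible cancellation in $ui$ still leaves the length-$m$ prefix of $w$ intact. Your treatment of the reduced-word cancellation is more explicit than the paper's (which simply asserts ``As $k_m>m$, $ui|_m=w|_m$''), but the idea is identical.
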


\begin{proof}
	For $ m \in \mathbb{N} $ choose $ k_{m} > m $ such that for $ i \in S $, if $ d(y,z) < \frac{1}{k_{m}} $ then $ d(f_{i}(y), f_{i}(z)) < \frac{1}{m} $.
	Now find $ u \in G $ such that $ d(f_{u}(x), y) < \frac{1}{k_{m}} $. Thus $ d( f_{ui}(x), f_{i}(y)) < \frac{1}{m} $. As $ k_{m} > m $, $ ui_{|m} = w_{|m} $.
	Therefore $ f_{i}(y) \in \omega_{F_{w}}(x) $.
\end{proof}

Among these limit set types, we have the following relationships.

\begin{thm}
	For all $w\in W_\infty$ and all $x\in X$, we have \[\omega_{w}(x) \subseteq \omega_{F_{w}}(x) \subseteq \omega(x).\]
\end{thm}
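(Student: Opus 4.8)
The plan is to establish each inclusion by showing that, for every $n$, the set whose closure appears in one intersection is contained in the set whose closure appears in the other, and then passing to closures and to the intersection over $n$. Everything reduces to two elementary bookkeeping facts about reduced words: first, if a finite word $u$ satisfies $u|_n = w|_n$, then $u$ has length at least $n$ (the restriction notation is only defined in that case), so $|u| \geq n$; second, for every prefix $w_0\dots w_k$ of the infinite word $w$ with $k \geq n$ one has $|w_0\dots w_k| = k+1 \geq n$ and the first $n$ letters of $w_0\dots w_k$ agree with $w|_n$.

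For $\omega_w(x) \subseteq \omega_{F_w}(x)$, I would fix $n$ and note that whenever $k > n$ the reduced word $w_0\dots w_k$ has length $k+1 > n$ and prefix $w|_n$, so $\{f_{w_0\dots w_k}(x) : k > n\} \subseteq \{f_u(x) : u|_n = w|_n\}$. Taking closures and using the definition of $\omega_w(x)$ gives $\omega_w(x) \subseteq \overline{\{f_u(x) : u|_n = w|_n\}}$; intersecting over all $n$ yields the claim.

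For $\omega_{F_w}(x) \subseteq \omega(x)$, I would again fix $n$ and use the first bookkeeping fact: $u|_{n+1} = w|_{n+1}$ forces $|u| \geq n+1 > n$, so $\{f_u(x) : u|_{n+1} = w|_{n+1}\} \subseteq \{f_u(x) : |u| > n\}$. Passing to closures and using that $\omega_{F_w}(x)$ lies in the closure of the left-hand set, we get $\omega_{F_w}(x) \subseteq \overline{\{f_u(x) : |u| > n\}}$ for every $n$, hence $\omega_{F_w}(x) \subseteq \omega(x)$. One could equivalently phrase this step via the metric descriptions already proved: given $y \in \omega_{F_w}(x)$ and $n$, apply the corresponding lemma with $m = n+1$ to obtain $u$ with $d(f_u(x),y) < \frac{1}{n+1} < \frac{1}{n}$ and $u|_{n+1} = w|_{n+1}$, so $|u| > n$, and then invoke the metric characterization of $\omega(x)$.

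I do not expect a genuine obstacle here: the argument is entirely a matter of keeping the prefix-and-length bookkeeping straight and absorbing the harmless off-by-one between the strict inequality $k > n$ in the definition of $\omega_w(x)$ and the prefix conditions defining the other two sets. The single point that deserves a moment's attention is precisely that ``$u|_n = w|_n$'' presupposes $|u| \geq n$, which is exactly what makes the second inclusion work.
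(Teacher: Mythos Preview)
Your proposal is correct and follows the same approach as the paper: both inclusions are obtained by observing the elementary containments of the pre-closure sets and then passing to closures and intersections. The only difference is that you are slightly more careful with the second inclusion, shifting by one so that $u|_{n+1}=w|_{n+1}$ genuinely forces $|u|>n$; the paper's version writes $\{f_u(x):u|_n=w|_n\}\subseteq\{f_u(x):|u|>n\}$ directly, which strictly speaking only gives $|u|\geq n$, but of course the intersection over all $n$ absorbs this harmlessly either way.
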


\begin{proof}
	The first inclusion follows from the observation that $ \left\{ f_{w_{0} \dots w_{k}}(x) : k > n \right\} \subseteq \left\{ f_{u}(x) : u_{|n} = w_{|n} \right\} $. The second inclusion similarly follows from the observation that $ \left\{ f_{u}(x) : u_{|n} = w_{|n} \right\}\subseteq \left\{ f_{u}(x) : \left| u \right| > n \right\}$.
\end{proof}
%
%\begin{lem}
%	$ \bigcup_{w \in W} \omega_{w}(x) \subseteq \omega(x) $.
%\end{lem}
%
%\begin{proof}
%	Suppose $ y \in \omega_{w}(x) $ for some $ w $. For all $ n > m $, $ \left| w_{0} \dots w_{n-1} \right| > m  $. Hence
%	$ \overline{\left\{f_{w_{1}\dots w_{n}}(x) : n > m \right\}} \subseteq \overline{\left\{f_{u}(x):
%		\left| u \right|  > m \right\}}$ and $ \omega_{w}(x) \subseteq \omega(x) $.
%\end{proof}
%
%\begin{lem}
%	$\omega_{F_{w}}(x) \subseteq \omega(x)$
%\end{lem}
%
%\begin{proof}
%	Let $ y \in \omega_{F_{w}}(x) $. Thus there is a sequence of finite words $ w_{n} $ with $ w_{n|n} = w_{|n} $ and $ d(f_{w_{n}}(x), y) < \frac{1}{n} $.
%	Note then that $ \left|w_{n}\right| \geq n $. Therefore $ y \in \omega(x) $.
%\end{proof}
%
%\begin{lem}
%	$ \omega_{w}(x) \subseteq \omega_{F_{w}}(x) $
%\end{lem}
%
%\begin{proof}
%	This is immediate after noting that $ \left\{ f_{w_{0} \dots w_{k}}(x) : k > n \right\} \subseteq \left\{ f_{u}(x) : u_{|n} = w_{|n} \right\} $.
%\end{proof}
%
%\begin{thm}
%	$ \omega_{w}(x) \subseteq \omega_{F_{w}}(x) \subseteq \omega(x) $ for all $ w \in W_{\infty} $.
%\end{thm}

\section{Analogues of internal chain transitivity}

As explored in a number of papers \cite{BGOR-DCDS, BGKR-omega, BMR-Variations, BDG-tentmaps, GoodMeddaugh-ICT}, there is a strong connection between limit sets and notions of transitivity. It is therefore not surprising that there are connections between analogous notions in the dynamics of group actions on compact metric spaces. In particular, we begin by seeking analogues of chain transitivity that will characterize the classes of limit sets described in Section \ref{Omega}.
For notation, let $ \mathfrak{W}_w $ denote the collection of all $ \omega_{w}$-limit sets and $ \mathfrak{W}_{F_{w}} $ the collection of all $ \omega_{F_{w}}$-limit sets in $ X $.

\begin{definition}
	Given $ \epsilon > 0 $ and an element $ u = u_{1} \dots u_{n} \in W $, an $ \epsilon $\emph{-chain indexed by u} is
	a sequence $ \left\{x_{1}, \dots, x_{n+1} \right\} $ of $ X $ such that $ d(f_{u_{i}}(x_{i})), x_{i+1}) < \epsilon $.
\end{definition}

\begin{definition}
	A closed subset $Y$ of $X$ is \emph{internally chain transitive} ($Y\in ICT$) if
	for every $ x, y \in Y $ and $ \epsilon > 0 $ there is a $ u \in W $ and $ \epsilon $-chain indexed by $ u $ with
	$ x_{1} = x $ and $ x_{n+1} = y $.
\end{definition}

\begin{thm}
 $ ICT $ is closed.
\end{thm}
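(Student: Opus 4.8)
The plan is to interpret ``closed'' as closed in the hyperspace of nonempty closed subsets of $X$ under the Hausdorff metric $d_H$; since that hyperspace is metrizable, it suffices to prove sequential closedness. So suppose $\langle Y_k\rangle$ is a sequence of internally chain transitive sets with $d_H(Y_k,Y)\to 0$ for some closed $Y\subseteq X$ (a $d_H$-limit of closed sets is automatically closed). Fix $x,y\in Y$ and $\epsilon>0$; the goal is to produce a $u\in W$ together with an $\epsilon$-chain in $Y$ indexed by $u$ that runs from $x$ to $y$.

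First I would fix a gauge by uniform continuity. Since $S$ is finite and each $f_s$ is continuous on the compact space $X$, there is a single $\delta>0$ with $\delta<\epsilon/3$ such that $d(a,b)<\delta$ implies $d(f_s(a),f_s(b))<\epsilon/3$ for all $s\in S$ and all $a,b\in X$. Set $\delta'=\min\{\delta,\epsilon\}/3$ and choose $k$ so large that $d_H(Y_k,Y)<\delta'$. Because $x,y\in Y$, there are $x',y'\in Y_k$ with $d(x,x')<\delta'$ and $d(y,y')<\delta'$. Applying internal chain transitivity of $Y_k$ with tolerance $\delta'$ gives $u=u_1\cdots u_n\in W$ and a $\delta'$-chain $\{z_1,\dots,z_{n+1}\}\subseteq Y_k$ indexed by $u$ with $z_1=x'$ and $z_{n+1}=y'$. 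I then transfer this chain into $Y$: put $x_1=x$, $x_{n+1}=y$, and for each $i$ with $1<i<n+1$ use $d_H(Y_k,Y)<\delta'$ to choose $x_i\in Y$ with $d(x_i,z_i)<\delta'$. Note that $d(x_1,z_1)=d(x,x')<\delta'$ and $d(x_{n+1},z_{n+1})=d(y,y')<\delta'$ as well, so $d(x_i,z_i)<\delta'$ for every index $i$.

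What remains is the verification of the chain condition, which is a three-term triangle estimate: for $1\le i\le n$ we have $d(f_{u_i}(x_i),x_{i+1})\le d(f_{u_i}(x_i),f_{u_i}(z_i))+d(f_{u_i}(z_i),z_{i+1})+d(z_{i+1},x_{i+1})$, where the first summand is $<\epsilon/3$ by the choice of $\delta$ (as $d(x_i,z_i)<\delta'\le\delta$), the second is $<\delta'\le\epsilon/3$ since $\{z_i\}$ is a $\delta'$-chain, and the third is $<\delta'\le\epsilon/3$ by the choice of the $x_i$; hence the total is $<\epsilon$. Thus $\{x_1,\dots,x_{n+1}\}$ is an $\epsilon$-chain in $Y$ indexed by $u$ from $x$ to $y$, so $Y\in ICT$. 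This argument is elementary; the only thing that needs care is the bookkeeping of the constants --- $\delta$ must be shrunk enough to absorb a single application of the generator maps, and $\delta'$ must be small enough both to serve as the tolerance fed into $Y_k$'s chain transitivity and to permit the endpoints of the transferred chain to be pinned exactly at $x$ and $y$ rather than merely near them. It is precisely the finiteness of the generating set $S$ that guarantees a single such $\delta$ exists.
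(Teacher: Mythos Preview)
Your proof is correct and follows essentially the same approach as the paper's: pick a nearby $ICT$ set, build a fine chain there between approximants of $x$ and $y$, and push it back into $Y$ via the Hausdorff proximity and a three-term triangle estimate. The only differences are cosmetic---you frame things sequentially and are slightly more explicit with the bookkeeping (introducing an auxiliary $\delta'$), but the underlying argument is identical.
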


\begin{proof}
 Let $ Y \in \overline{ICT} $, $x,y \in Y $ and $ \epsilon > 0 $.
 By the uniform continuity of $ f_{i} $ there is a $ \frac{\epsilon}{3} > \delta > 0 $
 such that $ d(a,b) < \delta $ implies $ d(f_{i}(a), f_{i}(b)) < \frac{\epsilon}{3} $.
 Choose $ B \in ICT $ with $ d_{H}(A,B) < \delta $ and $ x', y' \in B $ with $ d(x,x') < \delta
 $ and $ d(y,y') < \delta $.
 There is a $ \delta $-chain indexed by $ u $ from $ x' $ to $ y' $ in $ B $.
 Say this chain is $ \left\{ x_{1}, \dots , x_{n+1} \right\} $.
 For $ 2 \leq i \leq n $ choose $ z_{i} \in Y $ with $ d(z_{i}, x_{i}) <  \delta $ and let $ z_{1} = x $, $ z_{n+1} = y $.
 It follows that $ \left\{z_{1}, \dots, z_{n+1} \right\} $ is an $ \epsilon $-chain from
 $ x $ to $ y $ as $ d(f_{u_i}(z_{i}), z_{i+1}) \leq d(f_{u_i}(z_{i}), f_{u_i}(x_{i})) + d(f_{u_i}(x_{i}), x_{i+1}) + d(x_{i+1}, z_{i+1})
 < \epsilon $. Therefore $ Y \in ICT $.
\end{proof}

We have the following results correlating limit sets with internal chain transitivity.

\begin{lem}\label{omegaSufClose}
	For every $ \epsilon > 0 $, $ w = w_{1}w_{2}\dots \in W_\infty $, and $ x \in X $ there exists $ N_{\epsilon} \in \mathbb{N} $ such that for $ n > N_{\epsilon} $ $ d(f_{w_{1}\dots w_{n}}(x), \omega_{w}(x)) < \epsilon $.
\end{lem}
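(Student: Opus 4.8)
The plan is to argue by contradiction using the compactness of $X$, exactly as in the classical case. Fix $\epsilon > 0$, $w \in W_\infty$, and $x \in X$, and suppose no such $N_\epsilon$ exists. Then for every $N$ there is some $n > N$ with $d(f_{w_1\dots w_n}(x), \omega_w(x)) \geq \epsilon$; choosing these successively, we obtain a strictly increasing sequence $n_1 < n_2 < \cdots$ with $d(f_{w_1\dots w_{n_j}}(x), \omega_w(x)) \geq \epsilon$ for all $j$.

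Next I would invoke compactness of $X$ to pass to a subsequence along which $f_{w_1\dots w_{n_j}}(x)$ converges to some point $y \in X$. The key observation is then that $y \in \omega_w(x)$: for each fixed $m$, all but finitely many terms $f_{w_1\dots w_{n_j}}(x)$ have $n_j > m$, so $y \in \overline{\{f_{w_0\dots w_k}(x) : k > m\}}$; since this holds for every $m$, $y$ lies in the intersection defining $\omega_w(x)$.

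Finally, I would derive the contradiction from the fact that $z \mapsto d(z, \omega_w(x))$ is continuous (since $\omega_w(x)$ is a closed subset of the compact space $X$, so in particular nonempty). Passing to the limit in $d(f_{w_1\dots w_{n_j}}(x), \omega_w(x)) \geq \epsilon$ gives $d(y, \omega_w(x)) \geq \epsilon$, which is impossible because $y \in \omega_w(x)$ forces $d(y, \omega_w(x)) = 0$.

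The only point requiring any care — and the place I would expect a referee to look — is the claim that the subsequential limit $y$ genuinely belongs to $\omega_w(x)$; this is where the nested structure of the sets $\overline{\{f_{w_0\dots w_k}(x) : k > m\}}$ and the unboundedness of the indices $n_j$ are both used. Everything else is routine compactness bookkeeping.
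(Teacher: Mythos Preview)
Your proof is correct and follows exactly the same approach as the paper's own argument: assume the conclusion fails, extract an increasing sequence of indices along which the orbit stays $\epsilon$-away from $\omega_w(x)$, pass to a convergent subsequence by compactness, observe that the limit lies in $\omega_w(x)$, and derive a contradiction. Your write-up is in fact more careful than the paper's (which leaves the verification that $y \in \omega_w(x)$ implicit), but the strategy is identical.
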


\begin{proof}
	Suppose to the contrary. Then we have an increasing sequence of integers $ \left\{m_{n}\right\} $ with $ d(f_{w_{1} \dots w_{m_{n}}}(x), \omega_{w}(x)) > \epsilon $.
	By passing to a subsequence if necessary, $ \left\{f_{w_{1} \dots w_{m_{n}}} (x) \right\} $ converges to a point $ y \in \omega_{w}(x) $.
	However, $ d(y, \omega_{w}(x)) \geq \epsilon $, a contradiction.
\end{proof}

\begin{thm}
	$ \mathfrak{W}_{w} \subseteq ICT $.
\end{thm}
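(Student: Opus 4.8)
The plan is to show that an arbitrary $\omega_{w}$-limit set $Y=\omega_{w}(x)$ lies in $ICT$ by exhibiting, for any $y,z\in Y$ and $\epsilon>0$, an $\epsilon$-chain contained in $Y$ from $y$ to $z$. The underlying idea: a sufficiently late finite stretch of the orbit of $x$ along the prefixes of $w$ is an \emph{exact} chain indexed by the corresponding subword of $w$, and by Lemma~\ref{omegaSufClose} every point of such a stretch lies uniformly close to $Y$; so we can nudge this stretch back into $Y$ at the cost of only a controlled error, and splice $y$ and $z$ onto its ends. Since only the forward orbit along $w$ is used, the argument is the same for free groups and free monoids.

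For the construction, first use finiteness of $S$ and uniform continuity of the $f_{s}$ to choose $\delta\in(0,\epsilon/2)$ with $d(a,b)<\delta\Rightarrow d(f_{s}(a),f_{s}(b))<\epsilon/2$ for all $s\in S$, and apply Lemma~\ref{omegaSufClose} with $\delta$ in place of $\epsilon$ to get $N$ with $d(f_{w_{1}\cdots w_{j}}(x),Y)<\delta$ whenever $j>N$. Since $y,z\in\omega_{w}(x)=\bigcap_{n}\overline{\{f_{w_{1}\cdots w_{k}}(x):k>n\}}$, pick $m>N$ with $d(f_{w_{1}\cdots w_{m}}(x),y)<\delta$ and then $k>m$ with $d(f_{w_{1}\cdots w_{k}}(x),z)<\delta$. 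Set $u=w_{m+1}\cdots w_{k}$, which, being a subword of an infinite reduced word, is itself reduced, so $u\in W$ with no cancellation. With $n=k-m$, define the chain $x_{1},\dots,x_{n+1}$ indexed by $u$ by $x_{1}=y$, $x_{n+1}=z$, and, for $2\le i\le n$, pick $x_{i}\in Y$ with $d(x_{i},f_{w_{1}\cdots w_{m+i-1}}(x))<\delta$ — possible since $m+i-1>N$.

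It then remains to verify $d(f_{u_{i}}(x_{i}),x_{i+1})<\epsilon$ for each $i$, which is a routine triangle-inequality computation using the composition law $f_{w_{k}}\circ f_{w_{1}\cdots w_{k-1}}=f_{w_{1}\cdots w_{k}}$: each $x_{i}$ lies within $\delta$ of the genuine orbit point it shadows, so $f_{u_{i}}(x_{i})$ lies within $\epsilon/2$ of the next genuine orbit point, which is within $\delta<\epsilon/2$ of $x_{i+1}$; at the two ends the same estimate holds by the $\delta$-closeness of $y$ and $z$ to the relevant orbit points (and the boundary case $n=1$ is subsumed). Hence $Y\in ICT$.

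The single point needing care — and the reason Lemma~\ref{omegaSufClose} is invoked rather than just the definition of $\omega_{w}(x)$ — is that the $ICT$ condition requires the chain to lie \emph{in} $Y$, not merely near it. The exact orbit stretch along $w$ may fail to meet $Y$ at all, so one must replace every interior vertex with an honest point of $Y$; that this is possible, uniformly, is precisely the content of the eventual $\delta$-proximity of the $w$-orbit to $Y$. Everything else is bookkeeping with the modulus of continuity.
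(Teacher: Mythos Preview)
Your proof is correct and follows essentially the same approach as the paper: choose $\delta$ by uniform continuity, invoke Lemma~\ref{omegaSufClose} to get $N$, pick two late indices $m<k$ along $w$ with $f_{w_{1}\cdots w_{m}}(x)$ near $y$ and $f_{w_{1}\cdots w_{k}}(x)$ near $z$, and then perturb the exact orbit stretch indexed by $w_{m+1}\cdots w_{k}$ into $Y$ using the eventual $\delta$-proximity. The only cosmetic difference is that the paper uses $\epsilon/3$ where you use $\epsilon/2$; your explicit remark about why the lemma (rather than just the definition of $\omega_{w}(x)$) is needed is a nice clarification.
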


\begin{proof}
	Let $ \omega_{w}(x) \in \mathfrak{W}_{w} $, $ y,z \in \omega_{w}(x) $, and $ \epsilon > 0 $. By the uniform continuity of $ f_{i} $,  there is a $\frac{\epsilon}{3} > \delta > 0 $
	such that if $ d(p, q) < \delta $ then $ d(f_{u}(p), f_{u}(q)) < \frac{\epsilon}{3} $ for $ u \in S $.
	Let $ N_{\delta} $ be given by the previous lemma. Find $ n > m > N_{\delta} $ such that $ d(f_{w_{1} \dots w_{m}}(x), y) < \delta $ and $ d(f_{w_{1} \dots w_{n}}(x), z) < \delta $.

	Let $k=n-m$ and fix $ t_{1}\dots t_{k} = w_{m+1} \dots w_{n} $. Set $ x_{0} = y $, $ x_{k} = z $. For $1<i< k $ choose $ x_{i} $ so $ d(x_{i}, \sigma_{w_{1} \dots w_{n + i -1}}(x)) < \delta $.
	We claim that $ d(f_{t_{i}}(x_{i}), x_{i+1}) < \epsilon $. $ d(x_{i}, f_{w_{1} \dots w_{n + i -1}}(x)) < \delta $ so $d(f_{t_{i}}(x_{i}), f_{w_{1} \dots w_{n + i}}(x)) <
	\frac{\epsilon}{3}$. Thus $ d(f_{t_{i}}(x_{i}), x_{i+1}) < d(f_{t_{i}}(x_{i}),  f_{w_{1} \dots w_{n + i}}(x)) +  d(x_{i+1}, f_{w_{1} \dots w_{n + i}}(x)) < \epsilon$.
	Therefore $ \omega_{w}(x) $ is internally chain transitive.
\end{proof}

\begin{lem}\label{omegaFSufClose}
	Given $ x \in X $ and $ w \in W $, for every $ \epsilon > 0 $ there exists $ N_{\epsilon} \in \mathbb{N} $ such that for all $ u \in G $ with $ u_{|n} = w_{|n} $ for $ n > N_{\epsilon} $, $ d(f_{u}(x), \omega_{F_{w}}(x)) < \epsilon $.
\end{lem}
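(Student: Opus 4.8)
The plan is to argue by contradiction, mirroring the proof of Lemma \ref{omegaSufClose}. Suppose the conclusion fails for some $\epsilon > 0$. Then for every $N \in \mathbb{N}$ there is a $u \in G$ together with some $n > N$ such that $u_{|n} = w_{|n}$ yet $d(f_u(x), \omega_{F_w}(x)) \geq \epsilon$. Applying this successively for $N = 1, 2, 3, \ldots$ and discarding repetitions, I obtain a sequence $\seq{u_k}$ in $G$ and a strictly increasing sequence of integers $\seq{n_k}$ with $(u_k)_{|n_k} = w_{|n_k}$ and $d(f_{u_k}(x), \omega_{F_w}(x)) \geq \epsilon$ for all $k$. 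The only bookkeeping subtlety is that the negated statement produces both a witnessing group element and a length of agreement, and one must track the two together; this is routine.

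Next I invoke compactness of $X$: passing to a subsequence, I may assume $f_{u_k}(x) \to y$ for some $y \in X$. I then claim $y \in \omega_{F_w}(x)$, using the analytic characterization of $\omega_{F_w}(x)$ established in the lemma immediately preceding this one. Fix $m \in \mathbb{N}$. Since $n_k \to \infty$, I can pick $k$ large enough that $n_k > m$, whence $(u_k)_{|m} = w_{|m}$, and simultaneously large enough that $d(f_{u_k}(x), y) < \frac{1}{m}$. Thus $u_k$ witnesses the defining condition ``$\exists\, u \in G$ with $d(f_u(x), y) < \frac{1}{m}$ and $u_{|m} = w_{|m}$.'' As $m$ was arbitrary, $y \in \omega_{F_w}(x)$.

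Finally, the function $z \mapsto d(z, \omega_{F_w}(x))$ is continuous on $X$ (as $\omega_{F_w}(x)$ is closed and nonempty), so from $f_{u_k}(x) \to y$ we obtain $d(y, \omega_{F_w}(x)) = \lim_k d(f_{u_k}(x), \omega_{F_w}(x)) \geq \epsilon$. But $y \in \omega_{F_w}(x)$ forces $d(y, \omega_{F_w}(x)) = 0$, a contradiction. Hence no such $\epsilon$ exists, and the lemma follows. I expect no genuine obstacle: the argument is a direct adaptation of Lemma \ref{omegaSufClose}, with ``words of large length'' replaced by ``words sharing a long prefix with $w$,'' and with the analytic description of $\omega_{F_w}(x)$ supplying the membership criterion.
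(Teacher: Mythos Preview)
Your proof is correct and follows essentially the same contradiction argument as the paper's own proof: negate, extract a sequence whose prefix agreement with $w$ grows without bound while staying $\epsilon$-far from $\omega_{F_w}(x)$, pass to a convergent subsequence by compactness, and observe the limit lies in $\omega_{F_w}(x)$, contradicting the distance bound. Your write-up is in fact more explicit than the paper's in invoking the analytic characterization of $\omega_{F_w}(x)$ and the continuity of the distance-to-a-set function, but the underlying idea is identical.
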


\begin{proof}
	Suppose not. Then for every $ n > N $ there is a $ u_{n} \in G $ with $ u_{n |n} = w_{|n} $ such that $ d(f_{u_{n}}(x), \omega_{F_{w}}(x)) > \epsilon $.
	By passing to a subsequence if necessary, these $ f_{u_{n}}(x) $ converge to a point $ y \in \omega_{F_{w}}(x) $. However, $ d(y, \omega_{F_{w}}(x)) > \epsilon $,
	a contradiction.
\end{proof}

With this lemma, we get the following theorem using the same technique as in the analogous theorem for $  \omega_{w}(x) $.

\begin{thm}
	For an $F$-action on $X$, $ \mathfrak{W}_{F_{w}} \subseteq ICT$.
\end{thm}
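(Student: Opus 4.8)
The plan is to follow the proof of the preceding theorem $\mathfrak{W}_w \subseteq ICT$ almost verbatim, with one structural change. In place of the single forward trajectory $f_{w_1\cdots w_k}(x)$ along $w$, I will use a trajectory that first runs \emph{backwards} from an approximant of $y$ to a long common prefix of $w$, and then runs forwards to an approximant of $z$. It is precisely here that the free-group hypothesis is used: traversing the backward leg requires inverse generators, and these belong to $S$ only when $G$ is a free group (which is why the theorem is stated for $F$-actions and only a weaker statement, in the spirit of the earlier corollary, can hold for monoids).

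Fix $\omega_{F_w}(x)\in\mathfrak{W}_{F_w}$, points $y,z\in\omega_{F_w}(x)$, and $\epsilon>0$. By uniform continuity of the maps $f_s$, choose $\delta$ with $0<\delta<\epsilon/3$ so that $d(p,q)<\delta$ implies $d(f_s(p),f_s(q))<\epsilon/3$ for every $s\in S$, and let $N_\delta$ be the integer furnished by Lemma~\ref{omegaFSufClose} for this $\delta$. Using the analytic description of $\omega_{F_w}(x)$, pick $u,v\in G$ and an integer $n>N_\delta$ with $u|_n=v|_n=w|_n$, $d(f_u(x),y)<\delta$, and $d(f_v(x),z)<\delta$. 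Replacing $n$ by the length of the longest common prefix of $u$ and $v$ (still $>N_\delta$), and setting aside the degenerate case $u=v$ (in which either $y=z$ and the one-point chain indexed by $e$ suffices, or $v$ can be re-selected via the analytic description with a larger index so that $u\ne v$), we may write $u=(w|_n)a_1\cdots a_p$ and $v=(w|_n)b_1\cdots b_q$ with $a_1\neq b_1$.

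Now form the word $r=a_p^{-1}\cdots a_1^{-1}b_1\cdots b_q$. Reducedness of $a_1\cdots a_p$ and of $b_1\cdots b_q$, together with $a_1\neq b_1$ and the fact that $a_1,b_1\neq w_n^{-1}$, guarantees that $r$ is a reduced word, so $r\in W$, and $ur=v$ in $F$. Reading $r$ from left to right starting at $u$ produces group elements $g_0=u,\ g_1,\ \ldots,\ g_N=v$ (where $N=p+q$ and $g_i=g_{i-1}r_i$), namely $(w|_n)a_1\cdots a_{p-1},\ \ldots,\ (w|_n)a_1,\ w|_n,\ (w|_n)b_1,\ \ldots,\ (w|_n)b_1\cdots b_q$, and \emph{every one of these retains $w|_n$ as a prefix}, hence satisfies $g_i|_n=w|_n$ with $n>N_\delta$. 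By Lemma~\ref{omegaFSufClose} each $f_{g_i}(x)$ lies within $\delta$ of $\omega_{F_w}(x)$, so for $1\le i\le N-1$ choose $x_{i+1}\in\omega_{F_w}(x)$ with $d(x_{i+1},f_{g_i}(x))<\delta$, and set $x_1=y$, $x_{N+1}=z$; then $d(x_i,f_{g_{i-1}}(x))<\delta$ for all $i$. The familiar three-term estimate — compare $f_{r_i}(x_i)$ to $f_{r_i}(f_{g_{i-1}}(x))=f_{g_i}(x)$ (error $<\epsilon/3$ by the choice of $\delta$) and then $f_{g_i}(x)$ to $x_{i+1}$ (error $<\delta$) — gives $d(f_{r_i}(x_i),x_{i+1})<\epsilon/3+\delta<\epsilon$. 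Thus $\{x_1,\dots,x_{N+1}\}$ is an $\epsilon$-chain indexed by $r$ from $y$ to $z$ with all terms in $\omega_{F_w}(x)$, so $\omega_{F_w}(x)\in ICT$.

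The $\delta$-bookkeeping and the triangle inequality are routine and are identical to the $\omega_w$ case. The one genuinely new point, and the step I expect to require the most care, is the construction of the connecting word $r$: one must check that it is a legitimate reduced word — which is what forces the reduction of $n$ to the longest common prefix so that $a_1\neq b_1$ — and, more importantly, that \emph{all} intermediate group elements along $r$ keep $w|_n$ as a prefix, since that is exactly the condition that lets Lemma~\ref{omegaFSufClose} control the entire chain simultaneously. The small exceptional configurations ($p=0$, $q=0$, $u=v$) are dispensed with as indicated above.
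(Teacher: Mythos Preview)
Your argument is correct and is exactly the fleshed-out version of what the paper intends by ``the same technique'' (the paper gives no further details beyond that phrase). The backward-then-forward walk through the common prefix, together with Lemma~\ref{omegaFSufClose} and the usual $\epsilon/3$ estimate, is the right implementation, and your remark about why the free-group hypothesis enters is on point.

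One small slip to repair: once you replace $n$ by the length of the longest common prefix $P$ of $u$ and $v$, that prefix need not equal $w|_n$ --- it contains the \emph{original} $w|_{n_0}$ but may continue beyond it along a branch different from $w$. So the assertions $u=(w|_n)a_1\cdots a_p$ and $g_i|_n=w|_n$ are not literally correct in general. The fix is painless: write $P$ in place of $(w|_n)$ throughout, and note that since each $g_i$ has $P$ as a prefix and $P$ has $w|_{n_0}$ as a prefix, Lemma~\ref{omegaFSufClose} applies with the original $n_0>N_\delta$. Alternatively, sidestep the issue entirely by choosing $v$ to agree with $w$ on a prefix of length greater than $|u|$; then the longest common prefix of $u$ and $v$ genuinely is a prefix of $w$ and your notation stands as written.
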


It should be noted that when $G$ is a group, internal chain transitivity is a rather weak condition. In particular, since there is no inherent `direction' for chains, any finite segment of an orbit is an internally chain transitive set. In particular, we fail to see the expected equivalence of being a limit set and being internally chain transitive. Even in the context of full shifts, internal chain transitivity alone is not enough to characterize $ \omega_{w}(x) $.

\begin{example}
	Let $ X $ be the full-shift on the alphabet $ \left\{0,1,2\right\} $ over $ F_{2} $.
	Let $ Y \subset X = \left\{x_{0}, x_{1}, \dots \right\}$ where $ x_{0} $ is 1 for elements of the form $ a^{m}b^{n} $ ($ n > 0 $) and 0 elsewhere,
	$ x_{1} = \sigma_{b^{-1}}(x_{0}) $. $ x_{2} $ is the same as $ x_{0} $ except $ 1 $ is 2. For $ i > 2 $, $ x_{2+i} = \sigma_{a^{i}}(x_{2})$.
	It is not hard to see that this is ICT. Getting to and from $ x_{0} $ and $ x_{1} $ is just a shift as is $ x_{2} $ to $ x_{2+i} $. To get from $ x_{1} $ to $ x_{2} $ we
	just need to get $ i $ sufficiently large so that $ d(\sigma_{a}(x_{1}), x_{2+i}) < \epsilon $ and then shift to get to $ x_{2} $.

	However, $ Y $ cannot be expressed as $ \omega_{w}(x) $ for any $ w, x $. Note that for $ \epsilon < 2^{-2} $ any $ \epsilon $-chain ending at $ x_{0} $ must
	be indexed by a word ending in $ b $ and any $ \epsilon $-chain beginning at $ x_{0} $ and going to $ x_{i} $ ($ i > 0 $) must begin with $ b^{-1} $. However, we claim this can
	never be the case for an element in $ \omega_{w}(x) $.

	Let $ x \in X $, $ 2^{-2} > \epsilon > 0 $, $ w = w_{1}w_{2} \dots \in W_{\infty} $ be given, and choose $ N_{\epsilon} \in \mathbb{N} $ such that for $ m > N_{\epsilon} $, $ d(\sigma_{w_{1} \dots w_{m}}(x), \omega_{w}(x)) < \epsilon $. Let $ \frac{\epsilon}{3} > \delta > 0$ so that if $ d(x, y)< \delta $ $ d(\sigma_{i}(x), \sigma_{i}(y)) < \frac{\epsilon}{3} $.
	Find $ k > N_{\epsilon} $ so that $ d(\sigma_{w_{1} \dots w_{k}}(x), x_{1}) < \delta  $, $ m > k $ with $ d(\sigma_{w_{1} \dots w_{m}}(x), x_{0}) < \delta $ and $ l > m $ with
	$ d(\sigma_{w_{1} \dots w_{l}}(x), x_{1}) < \delta $. As in the proof that $ \omega_{w}(x) $ is ICT, we can get an $ \epsilon $-chain between $ x_{1} $ and $ x_{0} $ indexed
	by the word $ w_{m+1}\dots s_{k} $ and a $ \epsilon $-chain  between $ x_{0} $ and $ x_{1} $ indexed by $ w_{k+1} \dots  w_{l} $. As $ w_{k} $, $ w_{k+1} $ are in a reduced word,
	$ w_{k} \neq w_{k+1}^{-1} $.
	This then shows that our example cannot be expressed as an $ \omega_w $-limit set.
\end{example}

With this example, we see that a stronger form of chain transitivity is necessary to characterize $\omega_w$-limit sets for free group actions.

\begin{definition}
	Let $ Y$ be a closed subset of $X$. $ Y $ is \emph{consistently internally chain transitive} ($Y\in CICT$) if for every $ x \in Y $ there are
	$ i(x), t(x) \in S $ with $ i(x) \neq t(x)^{-1} $ such that for every $ \epsilon > 0 $ and $ x, y \in Y $ there is an $ \epsilon $-chain
	between $ x $ and $ y $ indexed by a word starting with $ i(x) $ and ending with $ t(y) $.
\end{definition}

It is immediate from this definition that $CICT \subseteq ICT$. Also, in the context of free monoids, $ CICT $ is equal to $ ICT $
as there are no inverses.

In the case of $ ICT $, it was relatively easy to show the set is closed.
However, the analogous result for $ CICT $ has an added subtlety.
Not only must there be $ \delta $-chains, but also each point $ x $ has an associated initial and terminal index $ i(x) $ and $ t(x) $.
For $ Y \in \overline{CICT} $ and $ x \in Y $, it is intuitive to define $ i(x), j(x) $ to match those of points in $ CICT $ sets that
converge to $ x $. However, defining $ i(x), j(x) $ for all $ x $ in this manner may not imply $ i(x), t(y) $ interact in the necessary way
for any pair $ x, y $.
In order to properly choose $ i(x), t(x) $, we must appeal to properties of $ X $ being a compact metric space, namely that $ X $ is separable.

\begin{thm}
  $ CICT $ is closed.
\end{thm}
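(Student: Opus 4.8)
The plan is to show that $CICT$ is closed by taking $Y\in\overline{CICT}$ and constructing appropriate index assignments $i(x),t(x)$ for each $x\in Y$, then verifying that these witness $Y\in CICT$. The key difficulty, as the paper's own remarks flag, is that a naive limit construction of $i(x)$ and $t(x)$ from nearby $CICT$ sets need not produce indices that interact correctly across pairs: we need a single coherent choice. The remedy the excerpt hints at is separability of $X$, so I would first fix a countable dense subset $D=\{d_1,d_2,\dots\}$ of $X$, and work with a sequence $B_k\in CICT$ with $d_H(B_k,Y)\to 0$, each $B_k$ carrying its own index functions $i_k,t_k$.

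First I would set up a diagonal/subsequence argument. For each $k$ and each point $d_m\in D$, pick (if possible) a point $b^k_m\in B_k$ with $d(b^k_m,d_m)$ within $d_H(B_k,Y)+1/k$ of the distance $d(d_m,B_k)$, and record the pair $(i_k(b^k_m),t_k(b^k_m))\in S\times S$. Since $S$ is finite, $S\times S$ is finite, so by passing to a subsequence of $k$ (diagonalizing over $m\in\N$) I may assume that for every $m$ the pair $(i_k(b^k_m),t_k(b^k_m))$ is eventually constant in $k$, equal to some fixed $(\iota_m,\tau_m)$ with $\iota_m\neq\tau_m^{-1}$. Then for an arbitrary $y\in Y$, I would define $i(y)$ and $t(y)$ by choosing $d_m$ from $D$ converging to $y$ and taking a convergent-in-the-finite-set value of $(\iota_m,\tau_m)$ along that sequence; a further diagonalization ensures this is well defined (independent of the approximating sequence can be arranged, or simply fix one approximating sequence per point). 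This produces $i,t:Y\to S$ with $i(y)\neq t(y)^{-1}$.

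Next I would verify the chain condition. Given $x,y\in Y$ and $\epsilon>0$, use uniform continuity of the finitely many $f_s$ to get $\epsilon/3>\delta>0$ with $d(a,b)<\delta\implies d(f_s(a),f_s(b))<\epsilon/3$ for all $s\in S$. Choose $k$ large enough that $d_H(B_k,Y)<\delta$, that $B_k$'s index functions have stabilized to the relevant values, and that the points $b^k_{m}, b^k_{m'}$ approximating the dense points near $x$ and $y$ (hence near $x,y$ themselves within, say, $2\delta$) carry exactly the indices $i(x)=\iota_m$ and $t(y)=\tau_{m'}$ — this is where the careful bookkeeping pays off: the approximants must be chosen close enough to $x$ and $y$ that their prescribed indices agree with those we assigned to $x$ and $y$. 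Inside $B_k$ there is a $\delta$-chain $\{x_1,\dots,x_{n+1}\}$ from $b^k_m$ to $b^k_{m'}$ indexed by a word $u=u_1\cdots u_n$ starting with $i(x)$ and ending with $t(y)$. Replace the endpoints by $x$ and $y$ and, for $1<i\le n$, replace $x_i$ by a point $z_i\in Y$ with $d(z_i,x_i)<\delta$; the triangle-inequality estimate $d(f_{u_i}(z_i),z_{i+1})\le d(f_{u_i}(z_i),f_{u_i}(x_i))+d(f_{u_i}(x_i),x_{i+1})+d(x_{i+1},z_{i+1})<\epsilon$ (exactly as in the proof that $ICT$ is closed) shows this is an $\epsilon$-chain in $Y$ from $x$ to $y$ indexed by a word starting with $i(x)$ and ending with $t(y)$. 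Hence $Y\in CICT$.

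The main obstacle I anticipate is purely organizational: making the index assignment on all of $Y$ coherent using only countably many "witness" points while guaranteeing that whenever $x'\in Y$ is close enough to $x$, the witness indices attached to $x'$ coincide with $i(x),t(x)$. The finiteness of $S\times S$ is what makes this tractable (only finitely many possible index-pairs, so stabilization along a subsequence is forced), but the quantifier order — choose $\delta$, then $k$, then the dense approximants, all consistently — needs to be arranged with care, and I would state it as: for each $x\in Y$ fix once and for all a sequence $d_{m(x,j)}\to x$ realizing the defining value of $(i(x),t(x))$, and when handling a pair $(x,y)$ take $k$ large enough to simultaneously control $d_H(B_k,Y)$, the stabilization of indices for the finitely many dense points $d_{m(x,j)}$ and $d_{m(y,j)}$ in play, and the closeness of $B_k$'s corresponding approximants to $x$ and $y$.
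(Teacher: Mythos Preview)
Your proposal is correct and follows essentially the same approach as the paper: fix a countable dense set, use the finiteness of $S\times S$ together with a nested-subsequence (diagonal) argument to stabilize the index pairs along the approximating $CICT$ sets, extend the assignment to all of $Y$ via limits from the dense set, and then lift a $\delta$-chain from a suitable $B_k$ to an $\epsilon$-chain in $Y$ by the standard $\epsilon/3$ estimate. The only cosmetic difference is that you take $D$ dense in $X$ while the paper takes $\Lambda$ dense in $A=Y$ itself; since a compact metric space is separable and subspaces of separable metric spaces are separable, either choice works, though taking the dense set inside $Y$ (as the paper does) streamlines the bookkeeping slightly.
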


\begin{proof}
 Let $ A \in \overline{CICT} $ and choose a countable, dense subset of $ A $, $ \Lambda = \left\{ x_{1}, x_{2}, \dots \right\} $.
 Choose a sequence $ \left\{ B_{i} \right\}_{i \in \mathbb{N}} \subseteq CICT $ so that $ d_{H}(A, B_{n}) < \frac{1}{n} $.
 For each $ x_{n} $ and $ k \in \mathbb{N} $ choose $ x_{n}^{k} \in B_{k} $ with $ d(x_{n}, x_{n}^{k}) < \frac{1}{k} $.
 For $ x_{1} $ we can choose $ i(x_{1}), j(x_{1}) $ such that $ i(x_{1}) = i(x_{1}^{k}) $ and $ j(x_{1}) = j(x_{1}^{k}) $ for
 infinitely many $ k $. By passing to a subsequence $ \left\{ k_{m}(1) \right\}$ of $ \mathbb{N}$, we can assume $ i(x_{1}) = i(x_{1}^{k_{m}(1)}) $ and $ j(x_{1}) = j(x_{1}^{k_{m}(1)}) $ for all $ k_{m}(1)$ in the sequence.
 By induction, we can choose $ i(x_{n}), t(x_{n})$ and a subsequence $ \left\{ k_{m}(n) \right\}$ of $ \left\{ k_{m}(n-1) \right\}$ so that $ i(x_{n}) = i(x_{n}^{k_{m}(n)}) $ and $ j(x_{n}) = j(x_{n}^{k_{m}(n)}) $.
 For $ x \in A \backslash \Lambda $, choose a sequence $ \left\{ x_{n_k} \right\} \subseteq \Lambda $ that converges to $ x $.
 Choose $ i(x), j(x) $ so that $ i(x) = i(x_{n_{k}}) $ and $ j(x) = j(x_{n_{k}}) $ for infinitely many $ k $.

 Let $ x, y \in A $ and $ \epsilon > 0 $ be given. By the uniform continuity of the maps $ f_{i} $ there is a $ \frac{\epsilon}{3} > \delta > 0 $ such that
 $ d(p,q) < \delta $ implies $ d(f_{i}(p), f_{i}(q)) < \frac{\epsilon}{3} $.
 Choose $ x_{n}, x_{l} \in \Lambda $ with $ d(x, x_{n}), d(y, x_{l}) < \frac{\delta}{2} $, and $ i(x_{n}) = i(x) $,
 $ j(x_{k}) = j(y) $. Without loss of generality, assume $ l > n $. Choose $ k_{m}(l) > \frac{2}{\delta}$. By our choice, $i(x_{n}) = i(x_{n}^{k_{m}(l)})$, $j(x_{l}) = j(x_{l}^{k_{m}(l)})$, $ d(x,x_{n}^{k_{m}(l)}), d(y,x_{l}^{k_{m}(l)}) < \delta$, and there
 is a $ \delta $-chain in $ B_{k_{m}(l)}$ from $ x_{n}^{k_{m}(l)} $ to $ x_{l}^{k_{m}(l)} $
 indexed by a word beginning with $ i(x_{n}^{k_{m}(l)})$ and ending with $j(x_{l}^{k_{m}(l)})$.
 Replacing the first $ x_{n}^{k_{m}(l)} $ with $ x $ and the last $ x_{l}^{k_{m}(l)} $ with $ y $, and every other element of the chain with an element of $ A $ within $ \delta$
  gives an $ \epsilon $-chain from $ x $ to $ y $ indexed by
 a word beginning with $ i(x) $ and ending with $ t(y) $. Thus $ A \in CICT $.
\end{proof}

\begin{lem}
	$ \mathfrak{W}_{w} \subseteq CICT$.
\end{lem}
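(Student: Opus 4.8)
The plan is to upgrade the argument already used for $\mathfrak{W}_w \subseteq ICT$ so that it also controls the first and last letters of the word indexing each chain, and to extract the generators $i(y), t(y)$ required by the definition of $CICT$ from the reducedness of the infinite word $w$.

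First I would fix $\omega_w(x_0) \in \mathfrak{W}_w$ and a point $y \in \omega_w(x_0)$. Since $y$ belongs to every set $\overline{\{f_{w_1\dots w_k}(x_0) : k > n\}}$, I can choose a strictly increasing sequence $k_1 < k_2 < \cdots$ with $f_{w_1\dots w_{k_j}}(x_0) \to y$. For each $j$ the consecutive pair $(w_{k_j}, w_{k_j+1})$ of letters of $w$ satisfies $w_{k_j} \neq w_{k_j+1}^{-1}$ since $w$ is reduced, and there are only finitely many pairs $(a,b) \in S \times S$ with $a \neq b^{-1}$, so by pigeonhole one such pair $(a,b)$ occurs for infinitely many $j$. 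Passing to this subsequence gives $p_1 < p_2 < \cdots$ with $f_{w_1\dots w_{p_l}}(x_0) \to y$ and $w_{p_l} = a$, $w_{p_l+1} = b$ for all $l$; I then set $t(y) := a$ and $i(y) := b$, and $a \neq b^{-1}$ is exactly $i(y) \neq t(y)^{-1}$. (When $G$ is a free monoid this step is vacuous; in that case the lemma also follows directly from $\mathfrak{W}_w \subseteq ICT$ and the earlier observation that $CICT = ICT$.)

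With the data $(p_l), i(y), t(y)$ attached to each $y$, the rest mirrors the proof that $\mathfrak{W}_w \subseteq ICT$. Given $y, z \in \omega_w(x_0)$ and $\epsilon > 0$, I pick $\frac{\epsilon}{3} > \delta > 0$ by uniform continuity so that $d(p,q) < \delta$ implies $d(f_s(p), f_s(q)) < \frac{\epsilon}{3}$ for all $s \in S$, and let $N_\delta$ be as in Lemma~\ref{omegaSufClose}. I choose $p = p_l > N_\delta$ with $d(f_{w_1\dots w_p}(x_0), y) < \delta$ and then $q = q_{l'} > p+1$ with $d(f_{w_1\dots w_q}(x_0), z) < \delta$, using $z$'s subsequence $(q_l)$. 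The indexing word $u := w_{p+1}\dots w_q$ is a subword of $w$, hence reduced, begins with $w_{p+1} = i(y)$, and ends with $w_q = t(z)$; requiring $q > p+1$ makes $u$ have length at least two, so these two prescriptions do not conflict. Completing the chain by $x_1 := y$, $x_{q-p+1} := z$, and, for $1 < i \le q-p$, choosing $x_i \in \omega_w(x_0)$ within $\delta$ of $f_{w_1\dots w_{p+i-1}}(x_0)$ (which exists by Lemma~\ref{omegaSufClose} as $p+i-1 > N_\delta$), the triangle inequality yields $d(f_{w_{p+i}}(x_i), x_{i+1}) < \frac{\epsilon}{3} + \delta < \epsilon$ just as before, so $\{x_1,\dots,x_{q-p+1}\}$ is an $\epsilon$-chain from $y$ to $z$ indexed by a word beginning with $i(y)$ and ending with $t(z)$. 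Hence $\omega_w(x_0) \in CICT$.

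The delicate point — the one I expect to be the main obstacle — is the choice of the pair $i(y), t(y)$: it is essential to read $t(y)$ and $i(y)$ off the \emph{same} approximating index $p_l$, namely the letter entering position $p_l$ and the letter leaving it, because it is precisely the single-site relation $w_{p_l} \neq w_{p_l+1}^{-1}$ that forces $i(y) \neq t(y)^{-1}$; harvesting the two letters from unrelated subsequences would not give this inequality. That this inequality genuinely can fail without such care is exactly what the full-shift example preceding the lemma illustrates. A smaller point to keep track of is the requirement $q > p+1$, ensuring the two prescribed end letters of the indexing word are not accidentally identified.
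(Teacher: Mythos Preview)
Your proposal is correct and follows essentially the same route as the paper: extract $t(y)$ and $i(y)$ as a constant pair of consecutive letters $(w_{p_l}, w_{p_l+1})$ along a subsequence approximating $y$ (using pigeonhole and the reducedness of $w$ for $i(y)\neq t(y)^{-1}$), then build the $\epsilon$-chain along the segment $w_{p+1}\dots w_q$ exactly as in the ICT argument. The paper organizes the bookkeeping slightly differently (fixing sequences $\delta_n$ and $k_n(y)$ up front) but the idea is identical; your extra care about $q>p+1$ is harmless though not strictly necessary, since if $q=p+1$ the single letter $w_{p+1}$ is simultaneously $i(y)$ and $t(z)$.
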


\begin{proof}
	Fix $ w \in W_{\infty} $ and $ x \in X$.
	Choose $ \frac{1}{3n} > \delta_{n} > 0 $ so that for $ d(y,z) < \delta $, $ d(f_{i}(y), f_{i}(z) ) < \frac{1}{3n} $ and
	an increasing sequence of integers $ \{N_{n}\} $ so that if $ m > N_{n} $ $ d(f_{w_{1} \dots w_{m}}(x), \omega_{w}(x)) < \delta_{n} $.

	For $ y \in \omega_{w}(x)$, choose an increasing sequence of integers $ \left\{k_{n}(y)\right\} $ with \newline
	$ d(f_{w_{1}\dots w_{k_{n}(y)}}(x), y ) < \delta_{n}  $.
	By passing to a subsequence if necessary, we can assume
	$ w_{k_{i}(y)} = w_{k_{i+1}(y)} $ and $ w_{1+k_{i}(y)} = w_{1+k_{i+1}(y)} $ for $ i \in \mathbb{N} $. Set $ t(y) = w_{k_{i}(y)} $ and $ i(y) = w_{1+k_{i}(y)} $. Note that $ i(y) \neq t(y)^{-1} $ as both are consecutive letters in a reduced word.

	For given $ y, z \in \omega_{w}(x)$ and $ \epsilon > 0 $, find $ \frac{1}{n} < \epsilon $. Choose $ N_{n} < k_{l}(y) < k_{m}(z) $.
	Just as in the proof that $ \omega_{w}(x) $ is ICT we can find
	a $ \epsilon $-chain in $ \omega_{w}(x) $ between $ y $ and $ z $ indexed by $ w_{1+k_{l}(y)} \dots w_{k_{m}(z)} $.
\end{proof}

In shifts of finite type $X$, for a given $Y\in CICT$,  we can explicitly construct a word $w\in W_\infty$ and $x\in X$ with $Y=\omega_w(x)$.
The following lemma helps to verify our construction.

\begin{lem}\label{shiftPseudoOrbit}
	Let $ X $ be a shift space and fix a function $ \mathcal{O}: G \rightarrow X $. Suppose $ u \in G $ and $ m \in \mathbb{N} $ with the property
	that for $ v \in \Sigma^{m-1} $ and $ i \in S $, we have \[ d(\sigma_{i}(\mathcal{O}(uv)), \mathcal{O}(uvi)) < 2^{-m} .\] Then $\mathcal{O}(u)(v) = \mathcal{O}(uv)(e) $.
\end{lem}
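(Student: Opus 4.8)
The plan is to first translate the metric hypothesis into a statement about agreement of symbols, and then to run an induction on $|v|$ that peels off the letters of $v$ one at a time. For the translation I would use the elementary fact that, for the metric on $\mathcal{A}^{G}$, $d(x,y) < 2^{-m}$ forces $x(g) = y(g)$ for every $g \in G$ with $|g| \le m$; this is immediate from the definition of $d$, since the set of $n$ with $x|_{\Sigma^{n}} = y|_{\Sigma^{n}}$ is downward closed and $\Sigma^{m+1} = \{g : |g| \le m\}$. Combined with $\sigma_{i}(y)(g) = y(ig)$, the hypothesis of the lemma becomes: $\mathcal{O}(uv)(ig) = \mathcal{O}(uvi)(g)$ whenever $|v| < m-1$, $i \in S$, and $|g| \le m$. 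Informally, $\mathcal{O}$ is being treated as a pseudo-orbit and we want to show it is ``locally consistent'' on a ball of radius roughly $m$ around $u$.

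The heart of the proof is the auxiliary claim $(\star)$: for every reduced word $v$ with $|v| \le m-1$ and every reduced word $g$ such that $vg$ is reduced (no cancellation between $v$ and $g$) and $|v| + |g| \le m$, one has $\mathcal{O}(u)(vg) = \mathcal{O}(uv)(g)$. The statement of the lemma is then the special case $g = e$, since any $v \in \Sigma^{m-1}$ satisfies $|v| \le m-2 \le m-1$, the word $ve = v$ is reduced, and $|v| + 0 \le m$. I would prove $(\star)$ by induction on $|v|$: the base case $v = e$ is the tautology $\mathcal{O}(u)(g) = \mathcal{O}(u)(g)$, and for the step I would write $v = v'i$ with $v'$ the reduced prefix of length $|v|-1$ and $i \in S$. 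For $g$ as in $(\star)$, the word $v'ig = vg$ is reduced, hence so is $ig$, and $|v'| + |ig| = |v| + |g| \le m$; thus the induction hypothesis applied to $v'$ and the word $ig$ gives $\mathcal{O}(u)(vg) = \mathcal{O}(u)(v'(ig)) = \mathcal{O}(uv')(ig) = \sigma_{i}(\mathcal{O}(uv'))(g)$. Since $|v'| = |v|-1 \le m-2 < m-1$ and $|g| \le m$, the translated hypothesis for $v'$ and $i$ yields $\sigma_{i}(\mathcal{O}(uv'))(g) = \mathcal{O}(uv'i)(g) = \mathcal{O}(uv)(g)$, which closes the induction.

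The only genuine obstacle is the bookkeeping forced by cancellation in the free group: a naive induction on $|v|$ allowing arbitrary $g$ fails as soon as the letter appended to $v$ cancels into $v$, because then $vg$ is a strictly shorter group element and the hypothesis need not be available at the place one wants it. Restricting $(\star)$ to those $g$ for which $vg$ is reduced removes this difficulty, since prepending a single letter $i$ to such a $g$ — in the situation where $v'ig$ is still reduced — preserves reducedness, and the length constraint $|v|+|g| \le m$ is preserved exactly under this operation, so the induction closes cleanly. In the free monoid case there is no cancellation at all, so the extra hypotheses on $g$ and $vg$ are automatic and the same argument applies verbatim.
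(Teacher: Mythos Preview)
Your argument is correct and is essentially the paper's proof rephrased combinatorially: both induct along the letters of $v$, using the pseudo-orbit hypothesis at each step, with the paper tracking the metric bound $d(\sigma_{v_0\cdots v_i}(\mathcal{O}(u)),\mathcal{O}(uv_0\cdots v_i))<2^{-m+i}$ via the Lipschitz property of the shift, while you translate to symbol agreement at the outset and track the shrinking radius $|v|+|g|\le m$ directly. Your auxiliary claim $(\star)$ is exactly the symbolic restatement of the paper's metric estimate, and your care with reducedness of $vg$ makes explicit what the paper leaves implicit.
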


\begin{proof}
	Fix $ v = v_{0} \dots v_{n} \in \Sigma^{m-1} $. As $ d( \sigma_{v_{0}}(\mathcal{O}(u)), \mathcal{O}(uv_{0}) ) < 2^{-m} $,
	the uniform continuity of the shift maps gives
	$ d( \sigma_{v_{0}v_{1}}(\mathcal{O}(u)), \sigma_{v_{1}}(\mathcal{O}(uv_{0})) ) < 2^{-m + 1} $.
	Since  $ d( \sigma_{v_{1}}(\mathcal{O}(uv_{0})), \mathcal{O}(uv_{0}v_{1})) < 2^{-m} $
	we have $ d( \sigma_{v_{0}v_{1}}(\mathcal{O}(u)), \mathcal{O}(uv_{0}v_{1})) < 2^{-m + 1} $.
	By induction we see $ d( \sigma_{v_{0}\dots v_{i}}(\mathcal{O}(u)), \mathcal{O}(uv_{0}\dots v_{i})) < 2^{-m + i} $.

	Therefore $ d( \sigma_{v_{0}\dots v_{n}}(\mathcal{O}(u)), \mathcal{O}(uv_{0}\dots v_{n})) < 2^{-m + n} \leq 2^{-1} $.
	This implies that  $ \sigma_{v}(\mathcal{O}(u))(e)  = \mathcal{O}(uv)(e) $.
	The left-hand side of this equation is equal to $ \mathcal{O}(u)(v)$, so our lemma holds.
\end{proof}

\begin{thm}\label{CICT}
	If $ X $ is a shift of finite type and $ Y \in CICT$, then $ Y = \omega_{w}(x)$ for some
	$ x \in X $ and $ w \in W_{\infty} $.
\end{thm}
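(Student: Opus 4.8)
The plan is to realize $Y$ as $\omega_w(x)$ by concatenating the index words of ever-finer $CICT$-chains through a dense subset of $Y$ into a single infinite word $w$, and then building one point $x\in X$ whose orbit along $w$ tracks those chains with accuracy tending to perfection. Assume $X$ is an $M$-step SFT all of whose forbidden blocks are $M$-blocks. Fix a countable dense set $\{y_1,y_2,\dots\}\subseteq Y$ and an enumeration $\seq{p_j}_{j\in\omega}$ of points of $Y$ in which each $y_i$ occurs infinitely often (for instance list $y_1$; then $y_1,y_2$; then $y_1,y_2,y_3$; and so on), and set $\epsilon_j=2^{-(M+3+j)}$. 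For each $j$, $CICT$ supplies an $\epsilon_j$-chain in $Y$ from $p_j$ to $p_{j+1}$ indexed by a reduced word $v^{(j)}\in W$ beginning with $i(p_j)$ and ending with $t(p_{j+1})$. At each junction the last letter $t(p_{j+1})$ of $v^{(j)}$ and the first letter $i(p_{j+1})$ of $v^{(j+1)}$ satisfy $i(p_{j+1})\neq t(p_{j+1})^{-1}$ by the defining clause of $CICT$, so no cancellation occurs; since each $v^{(j)}$ is itself reduced, $w:=v^{(0)}v^{(1)}v^{(2)}\cdots$ genuinely lies in $W_\infty$. Splicing the chains at their shared endpoints yields a sequence $\seq{x_k}_{k\in\omega}$ in $Y$ with $x_0=p_0$, with $d(\sigma_{w_k}(x_{k-1}),x_k)<\epsilon'_k$ for $k\ge1$, where $\epsilon'_k$ is the $\epsilon_j$ of whichever chain contributes the $k$-th letter of $w$ (so $\epsilon'_k\to0$), and with each $p_j$ equal to $x_{k_j}$ for a strictly increasing sequence $\seq{k_j}_{j\in\omega}$.

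I would then turn this ray of chain points into a pseudo-orbit. The Cayley graph of $G$ is a tree, so each $g\in G$ determines a well-defined branch index $k(g)$, namely the length of the longest initial segment common to the geodesic $[e,g]$ and the geodesic ray $\rho=(e,w_1,w_1w_2,\dots)$, together with a reduced word $r(g)$ not continuing along $\rho$ with $g=w_1\cdots w_{k(g)}\,r(g)$. Define $\mathcal O\colon G\to X$ by $\mathcal O(g):=\sigma_{r(g)}(x_{k(g)})$; this lies in $X$ because $X$ is shift-invariant, and $\mathcal O(w_1\cdots w_k)=x_k$. A short case analysis shows $\sigma_i(\mathcal O(g))=\mathcal O(gi)$ \emph{exactly} for all $g\in G$ and $i\in S$, \emph{except} when $g$ and $gi$ are consecutive vertices of $\rho$, where $d(\sigma_i(\mathcal O(g)),\mathcal O(gi))<2\epsilon'$ for the corresponding chain quality; in particular every such error is below $2^{-(M+1)}$, and since $\epsilon'_k\to0$, letting $m_k$ be the largest $m$ for which the hypothesis of Lemma~\ref{shiftPseudoOrbit} holds at $w_1\cdots w_k$ with parameter $m$, we have $m_k\to\infty$. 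Finally put $x(g):=\mathcal O(g)(e)$.

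It then remains to check $x\in X$ and $\omega_w(x)=Y$. Since all pseudo-orbit errors are below $2^{-(M+1)}$, Lemma~\ref{shiftPseudoOrbit} applies at every $g$ with $m=M+1$, so $\sigma_g(x)(v)=x(gv)=\mathcal O(gv)(e)=\mathcal O(g)(v)$ for all $v\in\Sigma^{M}$; thus $\sigma_g(x)|_{\Sigma^M}=\mathcal O(g)|_{\Sigma^M}$ is an allowed $M$-block and $x$ contains no forbidden block. Applying Lemma~\ref{shiftPseudoOrbit} instead at $u=w_1\cdots w_k$ with $m=m_k$ gives $\sigma_{w_1\cdots w_k}(x)(v)=\mathcal O(w_1\cdots w_k)(v)=x_k(v)$ for $v\in\Sigma^{m_k-1}$, so $d(\sigma_{w_1\cdots w_k}(x),x_k)\le 2^{-(m_k-1)}\to0$. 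As each $x_k\in Y$ and $Y$ is closed, every subsequential limit of $\seq{\sigma_{w_1\cdots w_k}(x)}_k$ lies in $Y$, giving $\omega_w(x)\subseteq Y$; and since each $y_i$ equals $x_{k_j}$ for infinitely many $j$ (with $k_j\to\infty$), $\sigma_{w_1\cdots w_{k_j}}(x)\to y_i$ along these $j$, so $\{y_i\}\subseteq\omega_w(x)$ and hence $Y\subseteq\omega_w(x)$.

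The main obstacle is exactly the construction of $x$ and the verification that its $w$-orbit converges to the chain: the naive attempt $x=x_0$ fails, because errors along an $\epsilon$-chain compound by a factor of $2$ per step, so a long chain wipes out all accuracy. The tree structure of $G$ is what saves the day — it permits extending $\mathcal O$ off $\rho$ so that it is \emph{exactly} consistent there, and Lemma~\ref{shiftPseudoOrbit} then upgrades approximate local consistency near $w_1\cdots w_k$ to exact agreement on a ball of radius $m_k\to\infty$. The remaining delicate point, and the reason $CICT$ rather than plain $ICT$ is the correct hypothesis, is that $w$ must stay reduced across the junctions between consecutive chains — which is precisely what the requirement $i(p)\neq t(p)^{-1}$ provides. (In the free monoid case there are no inverses, hence no backward transitions in $\mathcal O$ and no reduction to worry about, and the same argument goes through.)
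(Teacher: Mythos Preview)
Your proof is correct and follows essentially the same approach as the paper's: both build an infinite reduced word $w$ by concatenating the index words of progressively finer $CICT$ chains through an increasingly dense collection of points in $Y$ (you use a countable dense set with each point repeated infinitely often, the paper uses a sequence of finite $2^{-k}$-covers), extend the resulting one-dimensional chain to a pseudo-orbit $\mathcal O$ on all of $G$ by shifting off the ray, define $x(g)=\mathcal O(g)(e)$, and invoke Lemma~\ref{shiftPseudoOrbit} both to certify $x\in X$ and to show $\sigma_{w_1\cdots w_k}(x)$ tracks the chain with vanishing error. Your explicit identification of the tree structure (branch index $k(g)$ and residual $r(g)$) and the observation that $\mathcal O$ is \emph{exact} off the ray make the argument somewhat cleaner than the paper's presentation, but the substance is the same.
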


\begin{proof}
	Let $ X $ be $ M $-step.
	For $ k \geq M + 1 $, let $ \left\{x^{k}_{i}\right\}_{i=0}^{n_{k}} \subseteq Y $ be sequence that $ 2^{-k} $ covers $Y$.
	As $ Y \in CICT $, there is a $ 2^{-k} $-chain from $x^{k}_{i} $ to $ x^{k}_{i+1} $ indexed by $ u_{i} $ where $ u_{i} $
	begins with $ i(x^{k}_{i}) $ and ends with $ t(x^{k}_{i+1}) $.
	By concatenating these chains, we can get a $ 2^{-k} $-chain $ \left\{y^{k}_{0}, \dots, y^{k}_{n_{k}} \right\} $ from $ x^{k}_{0} $ to $ x^{k}_{n_{k}} $ indexed by
	$ w_{k} = v^{k}_{1} \dots v^{k}_{m_{k}} $ such that $ v^{k}_{1} = i(x^{k}_{0}) $, $ v^{k}_{m_{k}} = t(x^{k}_{n_{k}}) $, and for every $ i $ there is an $ n $ such that $ x^{k}_{i} = y^{k}_{n} $.
	We also have a $ 2^{-k-1} $-chain $ \left\{z^{k}_{0}, \dots, z^{k}_{l_{k}} \right\} $ from $ x^{k}_{n_{k}} $ to $ x^{k+1}_{0} $ indexed by $ w'_{k} = v'_{1} \dots v'_{l_{k}} $ with $ v'_{1}  = i(x^{k}_{n_{k}}) $ and $ v'_{l_{k}} = i(x^{k+1}_{0}) $.

	Concatenating $ w = w_{M+1}w'_{M+1}w_{M+2}w'_{M+2} \dots $ and \newline $ \left\{y^{M+1}_{0}, \dots, y^{M+1}_{n_{M+1}}, z^{M+1}_{1}, \dots, z^{M+1}_{l_{M+1}}, y^{M+2}_{1}, \dots, y^{M+2}_{n_{M+2}}, \dots \right\} $,
	yields a sequence \newline $ \left\{z_{0}, z_{1}, \dots \right\} \subseteq Y $ and $ w = t_{1}t_{2} \dots \in W_{\infty} $ such
	that for all $ i \in \mathbb{N} $ $ d(\sigma_{t_{i+1}}(z_{i}), z_{i+1}) < 2^{-M-1} $ and for $ n > M+1 $ there is $ k_{n} $ such that for
	$ m > k_{n} $, $ d(\sigma_{t_{m}+1}(z_{m}), z_{m+1}) < 2^{-n} $.

	In order to keep track of these points and their interrelations, we define a function $ \mathcal{O}: G \rightarrow Y $. Set $ \mathcal{O}(t_{1}\dots t_{m}) = z_{m} $ and $ \mathcal{O}(e) = z_{0} $. For notation, let $ t_{0} = e $. For all other $ v \in G $
	let $ n_{v} $ be the largest integer such that $ t_{0} \dots t_{n_{v}} $ is a prefix of $ v = t_{0} \dots t_{n_{v}} v' $.
	Then let $ \mathcal{O}(v) = \sigma_{v'}(z_{n_{v}}) $.

	By our construction, $ \mathcal{O} $ has the properties that:
	\begin{enumerate}
		\item For $ u \in G $, $ v \in \Sigma^{M} $, and $ i \in S $, $ d(\sigma_{i}(\mathcal{O}(uv)), \mathcal{O}(uvi)) < 2^{-M-1} $.
		\item For every $ n \in N $ there is $ k_{n} \in \mathbb{N} $ such that for $ \left| u \right| > k_{n} $, $ v \in \Sigma^{n} $, and
		$ i \in R $, $ d(\sigma_{i}(\mathcal{O}(uv)), \mathcal{O}(uvi)) < 2^{-n-1} $.
	\end{enumerate}

	Define $ x: G \rightarrow \mathcal{A} $ by $ x(u) = \mathcal{O}(u)(e) $.
	We claim that $ x \in X $ and that $ \omega_{w}(x) = A $.
	By property (1) and Lemma \ref{shiftPseudoOrbit}, every $ M $-block in $ x $ is the central $M$-block for some $ \mathcal{O}(v) $.
	As every $ \mathcal{O}(v) \in X $, this implies $ x \in X $.
	Property (2) implies that $ d(\sigma(x), \mathcal{O}(v)) < 2^{-n} $ for $ \left| v \right| > k_{n} $.

	Let $ y \in Y $ and $ n > \mathbb{N} $.
	Find $ k, i, m $ such that $ d(y, x^{k}_{i}) < 2^{-n-1} $, $ \mathcal{O}(u_{1} \dots u_{m}) = x^{k}_{i} $, with $ m > k_{n+1} $.
	Therefore $ d(y, \sigma_{u_{1} \dots u_{m}}(x)) < 2^{-n} $, so $ y \in \omega_{w}(x) $.

	Now suppose $ y \in \omega_{w}(x) $. For $ n \in \mathbb{N} $ find $ m > k_{n+1} $ so that $ d(y, \sigma_{u_{1} \dots u_{m}}(x)) < 2^{-n-1}$ and $ d(\sigma_{u_{1} \dots u_{m}}(x), \mathcal{O}(u_{1} \dots u_{m})) < 2^{-n-1} $.
	Therefore $ d(y,  \mathcal{O}(u_{1} \dots u_{m})) < 2^{-n} $. As $ \mathcal{O}(u_{1} \dots u_{m} ) \in Y $ by construction
	and $ Y $ is closed, $ y \in Y $.

	Therefore $ Y  = \omega_{w}(x) $.
\end{proof}

\begin{cor}
	Let $X$ be a shift of finite type over $G$ . Then $\mathfrak W_w = CICT$. If $G$ is a free monoid, then $\mathfrak W_w= ICT$.
\end{cor}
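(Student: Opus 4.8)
The plan is to read the corollary off the two results that immediately precede it, together with the remark following the definition of $CICT$. First, the containment $\mathfrak{W}_w \subseteq CICT$ is exactly the Lemma proved just above Theorem \ref{CICT}; since the labels $i(y), t(y)$ there are built from consecutive letters of a reduced word and the constraint $i(y) \neq t(y)^{-1}$ is automatic when there are no inverses, that argument applies verbatim whether $G$ is a free group or a free monoid. Second, for the reverse containment I would take an arbitrary $Y \in CICT$ and apply Theorem \ref{CICT}: because $X$ is a shift of finite type, the theorem produces $x \in X$ and $w \in W_\infty$ with $Y = \omega_w(x)$, so $Y \in \mathfrak{W}_w$; again the construction there (the auxiliary map $\mathcal{O}$ and the application of Lemma \ref{shiftPseudoOrbit}) does not distinguish the group and monoid cases. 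Combining the two containments yields $\mathfrak{W}_w = CICT$.

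For the monoid statement, I would invoke the observation recorded right after the definition of $CICT$, namely that $CICT = ICT$ when $G$ is a free monoid. The inclusion $CICT \subseteq ICT$ is immediate; for the reverse, the point is that reduced positive words concatenate without cancellation, so given $Y \in ICT$ one may fix, for each $x \in Y$, letters $i(x)$ and $t(x)$ realized by loops based at $x$ of arbitrarily small mesh (using finiteness of $S$ and a pigeonhole argument), and then splice such loops onto any chain from $x$ to $y$ so as to prescribe its first and last letters, the vacuous condition $i(x) \neq t(x)^{-1}$ causing no trouble. With $CICT = ICT$ in hand, substituting into $\mathfrak{W}_w = CICT$ gives $\mathfrak{W}_w = ICT$.

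I do not anticipate a genuine obstacle here: the substantive content — closedness of the relevant classes, the explicit construction of the tracing point, and the verification via Lemma \ref{shiftPseudoOrbit} that it lies in $X$ — is all already in place, and the corollary is purely a matter of chaining the inclusions $\mathfrak{W}_w \subseteq CICT \subseteq \mathfrak{W}_w$ together with the identity $CICT = ICT$ in the monoid case. The only point warranting a second look is the monoid identity itself, and even there the argument is routine once one notes that no cancellation can occur when splicing positive words.
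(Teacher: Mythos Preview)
Your proposal is correct and matches the paper's approach exactly: the corollary is stated without proof because it is the immediate combination of the Lemma $\mathfrak{W}_w \subseteq CICT$, Theorem~\ref{CICT}, and the remark after the definition of $CICT$ that $CICT = ICT$ for free monoids. You supply more detail on the monoid identity (the loop-splicing and pigeonhole argument) than the paper does---the paper dispatches it in one line (``as there are no inverses'')---but this is harmless elaboration, not a different route.
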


\begin{definition}
	Given a metric space $ X $ and $ \delta > 0 $, a \emph{$ \delta $-G-pseudo orbit} is a function $ \mathcal{O}: G \rightarrow ֤֤֤X $ such that for $ u \in G $ and $ i \in S $ we have $ d(f_{i}(\mathcal{O}(u)), \mathcal{O}(ui) ) < \delta $.
\end{definition}

\begin{definition}
	Let $Y$ be a closed subset of $X$. $ Y $ is \emph{internally block transitive} ($Y \in IBT$)
	if for every $ \delta > 0 $ $ x_{1}, \dots , x_{n} \in Y $ there is a $ \delta $-G-pseudo orbit $ \mathcal{O} $ of $ Y $ containing each $ x_{i} $.
\end{definition}

The notion of internally block transitive is related to that of mesh transitivity explored in \cite{MR-Zd}. The choice of the word block in this terminology might seem spurious, but it is worth noting that the conditions on the pseudo-orbit are finite in nature and in the context of a free group or monoid, a $ \delta$-G-pseudo orbit can be easily generated by a extending a block function $ B: \Sigma_{n} \to Y $  with
$ d(B(ui), \sigma_{i}(B(u))< \delta$ for $ u \in G $ and $ i \in S $.
%However, the current formulation is preferable to simplify proofs.

\begin{thm}\label{IBTClosed}
 For any $ X $, $ IBT $ is closed.
\end{thm}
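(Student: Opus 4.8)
The plan is to mimic the proof that $ICT$ is closed: given a set $A$ in the Hausdorff closure of $IBT$, take a genuinely internally block transitive set $B$ that is Hausdorff-close to $A$, pull a fine pseudo-orbit of $B$ back into $A$ by shadowing its values with nearby points of $A$, and then check the pseudo-orbit estimate with a three-term triangle inequality. Suppose $A\in\overline{IBT}$; since the hyperspace of nonempty closed subsets of the compact space $X$ is itself compact, $A$ is closed, so it is a legitimate candidate for membership in $IBT$. Fix $\delta>0$ and finitely many points $x_1,\dots,x_n\in A$, which we may assume pairwise distinct (deleting repetitions does not weaken the conclusion we must prove). Using the uniform continuity of the finitely many maps $f_i$, $i\in S$, together with the separation of the $x_k$, choose $\eta>0$ with $\eta<\delta/3$, with $d(a,b)<\eta \Rightarrow d(f_i(a),f_i(b))<\delta/3$ for all $i\in S$, and with $3\eta<\min_{i\neq j}d(x_i,x_j)$.

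Next I would pick $B\in IBT$ with $d_H(A,B)<\eta$ and, for each $k$, a point $x_k'\in B$ with $d(x_k,x_k')<\eta$. The third condition on $\eta$ forces $d(x_i',x_j')>0$ for $i\neq j$, so the $x_k'$ are pairwise distinct. Applying the definition of $IBT$ to $B$ produces an $\eta$-$G$-pseudo orbit $\mathcal{O}'\colon G\to B$ whose image contains every $x_k'$; fix $u_k\in G$ with $\mathcal{O}'(u_k)=x_k'$. Because the $x_k'$ are distinct, so are the $u_k$, and the assignment $u_k\mapsto x_k$ is therefore consistent.

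Then I would define $\mathcal{O}\colon G\to A$ by setting $\mathcal{O}(u_k)=x_k$ for each $k$, and for every other $v\in G$ choosing $\mathcal{O}(v)\in A$ with $d(\mathcal{O}(v),\mathcal{O}'(v))<\eta$, which is possible since $d_H(A,B)<\eta$; note that the bound $d(\mathcal{O}(v),\mathcal{O}'(v))<\eta$ holds at the marked indices as well, since $d(x_k,x_k')<\eta$. A three-term triangle inequality, namely $d(f_i(\mathcal{O}(u)),\mathcal{O}(ui))\le d(f_i(\mathcal{O}(u)),f_i(\mathcal{O}'(u)))+d(f_i(\mathcal{O}'(u)),\mathcal{O}'(ui))+d(\mathcal{O}'(ui),\mathcal{O}(ui))<\delta/3+\delta/3+\delta/3$, where the first term is controlled by uniform continuity and $d(\mathcal{O}(u),\mathcal{O}'(u))<\eta$, the second by $\mathcal{O}'$ being an $\eta$-pseudo orbit, and the third by $d(\mathcal{O}(ui),\mathcal{O}'(ui))<\eta$, shows that $\mathcal{O}$ is a $\delta$-$G$-pseudo orbit of $A$ containing each $x_k$. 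Hence $A\in IBT$.

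The only delicate step, and the one requiring the extra hypothesis on $\eta$, is the well-definedness of $\mathcal{O}$ at the marked indices: a priori several of the chosen approximants $x_k'$ could coincide even when the $x_k$ do not, which would force a single element of $G$ to take two different values. Choosing $\eta$ smaller than a third of the minimal pairwise distance among the $x_k$ — possible precisely because the list is finite — eliminates this, after which the remainder is the same triangle-inequality bookkeeping used for $ICT$. This subtlety is considerably milder than the one arising for $CICT$, since there is no associated initial/terminal index data to reconcile here.
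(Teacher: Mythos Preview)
Your proof is correct and follows essentially the same approach as the paper: pick a nearby $B\in IBT$, pull an $\eta$-pseudo orbit of $B$ back to $A$ by replacing values with $\eta$-close points of $A$, and use the three-term triangle inequality. The paper's write-up simply says ``replace every $y_i$ in $\mathcal{O}$ with $x_i$'' without addressing the possibility that two approximants $y_i$ coincide; your extra condition $3\eta<\min_{i\neq j}d(x_i,x_j)$ and your explicit choice of a single marked index $u_k$ for each $k$ cleanly dispose of that well-definedness issue, but this is a refinement of the same argument rather than a different route.
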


\begin{proof}
 Suppose that $ Y \in \overline{IBT} $ and $ \epsilon > 0 $. By uniform continuity, choose $ \frac{\epsilon}{3} > \delta > 0 $
 so the $ d(p,q) < \delta $ implies $ d(f_{i}(p), f_{i}(q)) < \frac{\epsilon}{3} $.
 Choose $ x_{1}, \dots , x_{n} \in Y $ and $ B \in IBT $ with $ d_{H}(Y, B) < \delta $.
 Find $ y_{i} \in B $ with $ d(x_{i}, y_{i}) < \delta $ and let $ \mathcal{O} $ be a $ \delta $-G-pseudo orbit of $ B $ containing $ y_{1}, \dots , y_{n} $.
 We create a pseudo orbit $ \mathcal{O}' $ in $ A $ by replacing every $ y_{i} $ in $ \mathcal{O} $ with $ x_{i} $ and
 replacing any other $ y \in B $ with $ x \in A $ so that $ d(x,y) < \delta $.
 By the choice of $ \delta $, it is easy to see that $ \mathcal{O}'$ is an $ \epsilon $-G-pseudo orbit containing  $ x_{0}, \dots, x_{n} $.
 Therefore $ A \in IBT $ and $ IBT $ is closed.
\end{proof}

\begin{thm}
 If $ Y \in IBT $, $ Y $ is invariant.
\end{thm}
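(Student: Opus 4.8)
The plan is to establish the a priori stronger statement that $f_i(Y) \subseteq Y$ for every generator $i \in S$; composing the generator maps along a reduced word then gives $f_u(Y) \subseteq Y$ for all $u \in G$, and when $G$ is a free group this upgrades to $f_u(Y) = Y$ since $f_i$ and $f_{i^{-1}}$ are mutually inverse homeomorphisms of $X$.

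To prove $f_i(Y) \subseteq Y$, fix $y \in Y$ and $i \in S$. Because $Y$ is closed, it suffices to show $d(f_i(y), Y) < \epsilon$ for every $\epsilon > 0$. Given such an $\epsilon$, I would invoke the hypothesis $Y \in IBT$ with tolerance $\delta = \epsilon$ and the single chosen point $x_1 = y$: this yields a $\delta$-$G$-pseudo orbit $\mathcal{O}: G \to Y$ whose image contains $y$, so there is some $u \in G$ with $\mathcal{O}(u) = y$.

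Next I would evaluate $\mathcal{O}$ at the group element $ui$. Since $\mathcal{O}$ is a $\delta$-$G$-pseudo orbit, applying its defining inequality to the pair $(u,i)$ gives $d(f_i(\mathcal{O}(u)), \mathcal{O}(ui)) < \delta$, i.e. $d(f_i(y), \mathcal{O}(ui)) < \epsilon$. As $\mathcal{O}(ui) \in Y$, this shows $d(f_i(y), Y) < \epsilon$; since $\epsilon$ was arbitrary and $Y$ is closed, $f_i(y) \in Y$, which finishes the argument.

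I do not expect a genuine obstacle here. In contrast to the closedness proofs for $ICT$ and $CICT$, no uniform-continuity estimate and no compactness or diagonalization argument is needed: the pseudo-orbit condition is local and finitary, and a single application of $IBT$ with $n = 1$ already produces a point of $Y$ within $\epsilon$ of $f_i(y)$. The only points meriting care are purely bookkeeping, namely reading "containing $y$" as "$y$ lies in the range of $\mathcal{O}$" so that some $u$ with $\mathcal{O}(u) = y$ exists, and noting that $ui$ is a perfectly legitimate element of $G$ (after any cancellation) at which the pseudo-orbit is defined.
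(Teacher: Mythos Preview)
Your proof is correct and matches the paper's argument essentially line for line: both fix $y\in Y$ and $i\in S$, invoke $IBT$ with a small tolerance to obtain a pseudo-orbit $\mathcal O$ taking the value $y$ at some $u$, and then read off $d(f_i(y),\mathcal O(ui))<\epsilon$ from the pseudo-orbit inequality to conclude $f_i(y)\in Y$ by closedness. The only difference is cosmetic (you use a general $\epsilon$ where the paper uses $1/n$), and your added remarks about extending from generators to arbitrary $u\in G$ and upgrading to $f_u(Y)=Y$ in the group case are correct elaborations that the paper leaves implicit.
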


\begin{proof}
 Let $ y \in Y $ and consider $ f_{i}(y) $. For $ n \in \mathbb{N} $ there is
 a $ \frac{1}{n} $-G-pseudo orbit $ \mathcal{O}_{n} $ containing $ y $. Say $ \mathcal{O}_{n}(u_{n}) = y $ for all $ n $.
 Then $ d(f_{i}(\mathcal{O}_{n}(u_{n})), \mathcal{O}_{n}(u_{n}i) ) < \frac{1}{n}$.
 Hence $ d(f_{i}(y), \mathcal{O}_{n}(u_{n}i) ) < \frac{1}{n} $.
 As $ \mathcal{O}_{n}(u_{n}i) \in Y $ and $ Y $ is closed, $ f_{i}(y) \in Y $.
\end{proof}

With this definition, it is not guaranteed that we can concatenate these $ \delta $-G-pseudo orbits together in a meaningful way.
The following definitions will allow us sufficient conditions for which we have meaningful concatenation. The first is most applicable
to free group actions, the second to free monoid actions.

\begin{definition}
	If $ Y\in IBT$, an element $ y \in Y $ is \emph{i,j final}  if for every $ \delta > 0 $, $ x_{1}, \dots, x_{n} \in Y $ there is
	a $ \delta $-G-pseudo orbit $ \mathcal{O} $ of $ Y $ and indexes $ u_{k} $ such that $ \mathcal{O}(u_{k}) = x_{k} $ and $ u_{k} \neq u_{m} $ for $ k \neq m $
	and indexes $ u_{i}, u_{j} $ with $ \mathcal{O}(u_{i}) = \mathcal{O}(u_{j}) = y $, $ u_{i}, u_{j} $ end in $ i \neq j $ respectively and $ u_{i}, u_{j} $ are not
	prefixes of each other or any $ u_{k} $.
\end{definition}

\begin{definition}
	Let $Y$ be a closed subset of $X$. $Y\in IBT^*$ if and only if $Y\in IBT $ and there exists $y\in Y$ that is $i,j$-final.
\end{definition}

\begin{definition}
	If $ Y \in IBT $, an element $ y \in Y $ is \emph{final} if for every $ \delta > 0 $, $ x_{1}, \dots, x_{n} \in Y $ there is
	a $ \delta $-G-pseudo orbit $ \mathcal{O} $ of $ Y $ and indexes $ u_{k} $ and $ u_{y} $ such that $ \mathcal{O}(u_{k}) = x_{k} $, $ \mathcal{O}(e) = \mathcal{O}(u_{y}) = y $ with $ u_{y} $ not a prefix of any $ u_{k} $.
\end{definition}

\begin{definition}
	Let $Y$ be a closed subset of $X$. $Y\in IBT^{\circ}$ if and only if $Y\in IBT $ and there exists $y\in Y$ that is final.
\end{definition}

\begin{thm}\label{IBT*closed}
 $ IBT^{*} $ is closed.
\end{thm}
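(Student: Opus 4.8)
The plan is to follow the proof that $IBT$ is closed (Theorem~\ref{IBTClosed}), but to additionally carry a distinguished $i,j$-final point through the Hausdorff limit. Given $A\in\overline{IBT^{*}}$, the inclusion $IBT^{*}\subseteq IBT$ together with Theorem~\ref{IBTClosed} already gives $A\in IBT$, so the entire task is to exhibit one $i,j$-final point of $A$.

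First I would choose $B_{n}\in IBT^{*}$ with $d_{H}(A,B_{n})<\frac{1}{n}$ and pick an $i_{n},j_{n}$-final point $p_{n}\in B_{n}$. Since $S$ is finite there are only finitely many candidate pairs $(i_{n},j_{n})$, so after passing to a subsequence we may assume $(i_{n},j_{n})=(i,j)$ is fixed. Choosing $q_{n}\in A$ with $d(p_{n},q_{n})<\frac{1}{n}$ and using compactness of $A$, pass to a further subsequence so that $q_{n}$, and hence $p_{n}$, converges to some $y\in A$. The claim is that this $y$ is $i,j$-final in $A$.

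To verify the claim, fix $\delta>0$ and $x_{1},\dots,x_{m}\in A$. Using uniform continuity of the finitely many maps $f_{s}$, choose $\frac{\delta}{3}>\eta>0$ with $d(p,q)<\eta\Rightarrow d(f_{s}(p),f_{s}(q))<\frac{\delta}{3}$ for all $s\in S$, then pick $n$ from the subsequence with $d_{H}(A,B_{n})<\frac{\eta}{2}$ and $d(p_{n},y)<\frac{\eta}{2}$, and choose $x_{k}'\in B_{n}$ with $d(x_{k},x_{k}')<\frac{\eta}{2}$. Applying $i,j$-finality of $p_{n}$ in $B_{n}$ with tolerance $\eta$ to the points $x_{1}',\dots,x_{m}'$ produces an $\eta$-$G$-pseudo orbit $\mathcal{O}$ of $B_{n}$, pairwise distinct indices $v_{1},\dots,v_{m}$ with $\mathcal{O}(v_{k})=x_{k}'$, and indices $v_{i},v_{j}$ with $\mathcal{O}(v_{i})=\mathcal{O}(v_{j})=p_{n}$ ending in $i$ and $j$ respectively and not prefixes of each other or of any $v_{k}$. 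Since a word is a prefix of itself, these conditions force $v_{i},v_{j},v_{1},\dots,v_{m}$ to be pairwise distinct, so one may unambiguously define $\mathcal{O}'\colon G\to A$ by $\mathcal{O}'(v_{k})=x_{k}$, $\mathcal{O}'(v_{i})=\mathcal{O}'(v_{j})=y$, and $\mathcal{O}'(u)=a_{u}$ for any $a_{u}\in A$ with $d(a_{u},\mathcal{O}(u))<\eta$ on all remaining $u$. Exactly the triangle-inequality estimate of Theorem~\ref{IBTClosed} then gives $d(f_{s}(\mathcal{O}'(u)),\mathcal{O}'(us))<\frac{\delta}{3}+\eta+\eta<\delta$ for all $u\in G$ and $s\in S$, so $\mathcal{O}'$ is a $\delta$-$G$-pseudo orbit of $A$ witnessing $i,j$-finality of $y$; hence $A\in IBT^{*}$.

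The estimate verifying that $\mathcal{O}'$ is a pseudo orbit is routine and identical to Theorem~\ref{IBTClosed}. The step I would be most careful about is the bookkeeping that makes $y$ and its pair $(i,j)$ simultaneously well-defined: finiteness of $S$ is what freezes $(i,j)$ along a subsequence, and the clause ``not a prefix of each other or of any $u_{k}$'' is precisely what keeps the distinguished indices distinct so that the replacements defining $\mathcal{O}'$ do not collide. Unlike the proof that $CICT$ is closed, no diagonalization over a dense subset of $A$ is needed here, since $IBT^{*}$ asks only for the existence of a single final point rather than a coherent choice at every point.
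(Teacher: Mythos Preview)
Your proof is correct and follows essentially the same approach as the paper: pick approximating $IBT^{*}$ sets, use finiteness of $S$ to freeze the pair $(i,j)$ along a subsequence, extract a limit point $y$, and then transfer a witnessing pseudo-orbit across by the $\epsilon/3$ replacement argument of Theorem~\ref{IBTClosed}. The paper's version is much terser (it defers the verification to ``by a similar technique to Theorem~\ref{IBTClosed}''), and you have carefully filled in precisely those details, including the observation that the prefix condition forces the distinguished indices to be pairwise distinct so the replacements are consistent.
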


\begin{proof}
 Let $ Y \in \overline{IBT^{*}} $ and let $ \left\{ B_{n} \right\}_{n \in \mathbb{N}} \subseteq IBT^{*} $ converge to $ Y $.
 In each $ B_{n} $ there a point $ x_{n} $ that is $ i_{n}, j_{n} $ final.
 Choose $ i, j $ so that $ i = i_{n}, j= j_{n} $ infinitely often. By passing to a subsequence if necessary, we can assume
 $ i = i_{n}, j = j_{n} $ for all $ n $. Choose $ x \in X $ so $ \left\{ x_{n} \right\}_{n \in \mathbb{N}} $ converges to
 $ x $. By a similar technique to Theorem \ref{IBTClosed}, it is not hard to see $ Y \in IBT^{*} $ with $ x $ being $ i,j $ final.
\end{proof}

\begin{thm}
	 $ IBT^{\circ} $ is closed.
\end{thm}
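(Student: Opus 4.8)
The plan is to follow the proof of Theorem~\ref{IBT*closed} almost verbatim, with the role of the $i,j$-final point played by a \emph{final} point. Let $Y\in\overline{IBT^{\circ}}$ and choose a sequence $\{B_n\}_{n\in\mathbb N}\subseteq IBT^{\circ}$ with $d_H(B_n,Y)\to 0$. Each $B_n$ contains a final point $y_n$, and by compactness of $X$ we may pass to a subsequence along which $y_n\to y$ for some $y\in X$; since $d_H(B_n,Y)\to 0$ and $Y$ is closed, $y\in Y$. Because $IBT^{\circ}\subseteq IBT$ and $IBT$ is closed by Theorem~\ref{IBTClosed}, we already know $Y\in IBT$, so the only thing left to verify is that $y$ is final in $Y$.

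To that end, fix $\delta>0$ and $x_1,\dots,x_n\in Y$. Using uniform continuity of the generators, pick $\delta/3>\delta'>0$ with $d(p,q)<\delta'$ implying $d(f_i(p),f_i(q))<\delta/3$ for all $i\in S$; if $n>1$, shrink $\delta'$ further so that it is less than half of every positive distance $d(x_k,x_m)$, which guarantees that distinct $x_k$ receive distinct approximants below and hence that the pseudo orbit constructed next is well defined. Choose $N$ with $d_H(B_N,Y)<\delta'$ and $d(y_N,y)<\delta'$, and pick $x_k'\in B_N$ with $d(x_k,x_k')<\delta'$. Since $y_N$ is final in $B_N$, there is a $\delta'$-$G$-pseudo orbit $\mathcal O$ of $B_N$ together with indices $u_1,\dots,u_n,u_y\in G$ with $\mathcal O(u_k)=x_k'$, $\mathcal O(e)=\mathcal O(u_y)=y_N$, and $u_y$ not a prefix of any $u_k$.

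Now define $\mathcal O'\colon G\to Y$ by \emph{first} setting $\mathcal O'(u_k)=x_k$ and $\mathcal O'(e)=\mathcal O'(u_y)=y$, and \emph{then}, for every remaining $v\in G$, choosing $\mathcal O'(v)\in Y$ with $d(\mathcal O'(v),\mathcal O(v))<\delta'$ (possible since $d_H(B_N,Y)<\delta'$). The standard three-term estimate
\[ d(f_i(\mathcal O'(u)),\mathcal O'(ui))\le d(f_i(\mathcal O'(u)),f_i(\mathcal O(u)))+d(f_i(\mathcal O(u)),\mathcal O(ui))+d(\mathcal O(ui),\mathcal O'(ui))<\frac{\delta}{3}+\delta'+\delta'<\delta \]
shows $\mathcal O'$ is a $\delta$-$G$-pseudo orbit of $Y$; it contains every $x_k$, it satisfies $\mathcal O'(e)=\mathcal O'(u_y)=y$, and since only the values of the pseudo orbit were altered while its index set and prefix structure were left untouched, $u_y$ is still not a prefix of any $u_k$. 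Hence $y$ is final in $Y$, so $Y\in IBT^{\circ}$, and $IBT^{\circ}$ is closed.

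I expect no genuine obstacle here beyond bookkeeping. In contrast with the proof that $CICT$ is closed—where every point of the limit set needed consistently chosen initial and terminal labels, forcing an appeal to separability—finality is witnessed by a single point, so extracting one convergent subsequence of the $y_n$ suffices and the argument is a direct analogue of Theorem~\ref{IBT*closed}. The one spot deserving a moment's attention is the replacement step: the distinguished indices $e$, $u_y$ and the $u_k$ must be assigned their values before filling in the rest, and one must observe (as above) that the purely combinatorial ``$u_y$ is not a prefix of any $u_k$'' clause is untouched by changing pseudo-orbit values.
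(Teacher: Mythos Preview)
Your proposal is correct and follows the same route as the paper, whose own proof is a one-line reference to the technique of Theorem~\ref{IBTClosed}; you have simply written that technique out in detail, extracting a limit of the final points in the spirit of Theorem~\ref{IBT*closed}. One minor bookkeeping point: you shrink $\delta'$ to prevent the conflict $u_k=u_m$ with $x_k\neq x_m$, but the same device should also be applied to rule out $u_k=e$ when $x_k\neq y$ (just take $\delta'<\tfrac12\, d(x_k,y)$ for each such $k$, or simply append $y$ to the list $x_1,\dots,x_n$ before shrinking), since otherwise $\mathcal O'(e)$ would be assigned two distinct values.
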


\begin{proof}
	This follows from the technique used in Theorem \ref{IBTClosed}.
\end{proof}

\begin{lem}
	Let $ w \in W_{\infty} $, $ x \in X $ be given. For a free group action, $ \omega_{F_{w}}(x) \in IBT^{*}$ with $ y \in \omega_{w}(x) $ i,j-final for some $ i, j $.
\end{lem}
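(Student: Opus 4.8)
The plan is, for a given $\delta>0$ and points $x_1,\dots,x_n\in Y:=\omega_{F_w}(x)$, to assemble a single $\delta$-$G$-pseudo orbit $\mathcal O\colon G\to Y$ out of the orbit of $x$: it will follow the ray $e,w_0,w_0w_1,\dots$ determined by $w$, fill in the rest of the Cayley tree of $G$ by genuine dynamics, and be reset along finitely many finite side branches in order to pass through the prescribed points and through a distinguished point of $\omega_w(x)$. The inputs are Lemma~\ref{omegaFSufClose} (for $u$ whose reduced form has a long enough prefix in common with $w$, $f_u(x)$ is within a prescribed distance of $Y$), the invariance of $Y$ under every $f_s$, $s\in S$ (so genuine orbit segments of points of $Y$ remain in $Y$), and the partial invariance theorem for $\omega_w$, which supplies the point $y$ and two \emph{directions} along $w$.

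First I would fix $\eta\in(0,\delta/2)$ by uniform continuity so that $d(p,q)<\eta$ forces $d(f_s(p),f_s(q))<\delta/2$ for all $s\in S$, and then fix $N$ by Lemma~\ref{omegaFSufClose} with $d(f_u(x),Y)<\eta$ whenever the reduced form of $u$ has $w|_N$ as a prefix. The backbone of $\mathcal O$ is the $w$-ray: on ray vertices of length at least $N$ I set $\mathcal O(w_0\cdots w_m)$ to be a point of $Y$ within $\eta$ of $f_{w_0\cdots w_m}(x)$, and on the finitely many shorter ray vertices I define $\mathcal O$ by backward genuine dynamics from $\mathcal O(w|_N)$ (which lies in $Y$ by invariance). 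Every vertex of the Cayley tree is either on this ray or lies in a subtree hanging off a ray vertex in a non-ray direction; on the latter I use forward genuine dynamics from the ray vertex. Then $\mathcal O$ takes values in $Y$, and every edge carries error $0$ except the ray edges, where the triangle inequality and the choice of $\eta$ give error at most $\delta/2+\eta<\delta$.

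Next I would reset $\mathcal O$ on finitely many finite branches. Since $x_k\in Y$, the definition of $\omega_{F_w}$ gives a word $v_k$ with $v_k|_N=w|_N$ and $d(f_{v_k}(x),x_k)<\eta$; choosing the $v_k$ pairwise distinct and deep (a pigeonhole argument on the approximating words shows this is possible), each $v_k$ leaves the ray at a vertex of depth at least $N$ and runs a finite distance to $v_k$, and along that off-ray stretch I reset $\mathcal O$ to follow the orbit of $x$ perturbed into $Y$, finishing with $\mathcal O(v_k)=x_k$; all new edges still carry error $<\delta$. For the distinguished point I would apply the partial invariance theorem for $\omega_w$ to obtain $y\in\omega_w(x)$, letters $a,b\in S$ with $ab$ reduced, and times $m_s\to\infty$ with $f_{w_0\cdots w_{m_s}}(x)\to y$, $w_{m_s}=a$, $w_{m_s+1}=b$; this also yields $f_b(y)\in\omega_w(x)\subseteq Y$, and since $f_b(y)\in\omega_w(x)$ it too is a limit of orbit points along the $w$-ray. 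At one large recurrence time for $y$ I clean up the ray so that $\mathcal O(w_0\cdots w_{m_s})=y$, giving an occurrence of $y$ at an index ending in $a$; at a (smaller, generically chosen) recurrence time $l$ for $f_b(y)$ I clean up the ray so that $\mathcal O(w_0\cdots w_l)=f_b(y)$, and then a single genuine step by $b^{-1}$ gives $\mathcal O(w_0\cdots w_l\,b^{-1})=f_{b^{-1}}(f_b(y))=y$ at an off-ray index ending in $b^{-1}$. As $b\ne a^{-1}$ we have $b^{-1}\ne a$, so the two occurrences of $y$ end in different letters; choosing $m_s$ and $l$ large relative to the depths of the $v_k$ makes these two indices incomparable with each other and with every $v_k$. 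This shows $Y\in IBT$ and that $y$ is $(a,b^{-1})$-final, i.e. $Y\in IBT^*$.

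The part requiring the most care is the index bookkeeping: that the $v_k$ may be taken pairwise distinct (and, where needed, incomparable with the two $y$-indices), that the two $y$-indices are genuinely incomparable and are not prefixes of any $v_k$, and that the finitely many side branches do not overlap in a way that forces inconsistent assignments. As in the proof that $CICT$ is closed, this is handled by pushing the special indices deep into the Cayley tree and fixing the finitely many perturbation choices once and for all. One must also dispatch a few degenerate cases: when $w_l=b$ or $w_{l+1}=b^{-1}$ along the relevant subsequence the step by $b^{-1}$ would cancel or stay on the ray, so one instead applies the partial invariance theorem to $f_b(y)$ (or to $f_{a^{-1}}(y)$) to relocate the second occurrence of $y$ off the ray with the correct terminal letter; and when $Y$ is a single (necessarily fixed) point the statement is immediate for any two distinct generators.
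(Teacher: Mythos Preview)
Your overall plan matches the paper's: build the $\delta$-pseudo-orbit out of the orbit of $x$ along and near the $w$-ray, use Lemma~\ref{omegaFSufClose} and invariance of $Y=\omega_{F_w}(x)$ to keep values in $Y$, branch off the ray at distinct departure points to hit the $x_k$, and exploit the ray-recurrence structure of a point $y\in\omega_w(x)$ to place $y$ twice with different terminal letters. You also arrive at the same pair of letters as the paper (in your notation $a=t(y)$ and $b^{-1}=i(y)^{-1}$). The essential difference is where the origin of $\mathcal O$ sits. The paper does \emph{not} anchor $\mathcal O$ at $e$: it shifts the origin to the ray vertex $w_1\cdots w_{k_i+1}$ immediately \emph{past} a recurrence time $k_i$ for $y$ (so $w_{k_i}=t(y)$, $w_{k_i+1}=i(y)$). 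Then the single backward step $u_i=w_{k_i+1}^{-1}$ lands at the recurrence and ends in $i(y)^{-1}$, while the forward ray segment $u_j=w_{k_i+2}\cdots w_{k_j}$ to a later recurrence ends in $t(y)$. Since $u_i$ and $u_j$ begin with different letters they are automatically incomparable, and placing the $x_m$-branches at ray positions $k_m$ with $k_i<k_m<k_j$ guarantees neither $u_i$ nor $u_j$ is a prefix of any $u_m$. No case analysis is needed.

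With your origin at $e$, one $y$-index is the ray vertex $w_0\cdots w_{m_s}$ and the other is supposed to be the off-ray vertex $w_0\cdots w_l\,b^{-1}$ obtained from a recurrence of $f_b(y)$. The degenerate cases you flag are a genuine gap, and the suggested workaround does not close it. If along every recurrence subsequence for $f_b(y)$ one has $w_{l+1}=b^{-1}$ (this can occur, for instance when $w$ is eventually periodic), then the step by $b^{-1}$ lands back on the ray and \emph{both} $y$-indices are ray vertices, hence comparable, violating the $i,j$-final condition. Applying the partial invariance theorem to $f_b(y)$ or to $f_{a^{-1}}(y)$ produces further points of $\omega_w(x)$, but nothing in that statement manufactures an index for $y$ itself ending in $b^{-1}$ and incomparable with the ray index. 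The clean repair is exactly the paper's trick: take two ray recurrences $m_{s_1}<m_{s_2}$ of $y$ with $w_{m_{s_r}}=a$ and $w_{m_{s_r}+1}=b$, and place the origin of $\mathcal O$ at $w_0\cdots w_{m_{s_1}+1}$; then the backward step $b^{-1}$ hits the first recurrence, the forward ray segment hits the second, and the prefix conditions fall out for free.
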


\begin{proof}
	Let $ x_{1}, \dots, x_{n} \in \omega_{F_{w}}(x) $, $ y \in \omega_{F_{w}}(x) $, and $ \delta > 0 $ be given.
	By the uniform continuity of the maps $ f_{i} $, find $ \frac{\delta}{3} > \eta > 0  $ such that if $ d(x,y) < \eta $ then $ d(f_{i}(x), f_{i}(y)) < \frac{\delta}{3} $
	for $ i \in S $.
	As $ y \in \omega_{w}(x) $ and $ \omega_{w}(x) $ is CICT, let $ i = i(y)^{-1} $ and $ j = t(y) $, so $ i \neq j $.

	Now we can find $ w_{1} \dots w_{k_{i}}w_{k_{i}+1} \in F $ with $ i = w_{k_{i}+1}^{-1} $, $ d(y, f_{w_{1} \dots w_{k_{i}}}(x)) < \eta $
	and $ k_{i} > N_{\eta} $ given in Lemma \ref{omegaFSufClose}.
	For $ x_{1} $ we can find $ w_{1} \dots w_{k_{1}}v_{x_{1}} \in F $ with $ k_{1} > k_{i} + 1 $ such that $ d(x_{1}, f_{w_{1} \dots w_{k_{1}}v_{x_{1}}}(x)) < \eta $ and
	$ v_{x_{1}} $ does not begin with $ w_{k_{1} + 1} $.
	By induction we can find $ k_{m+1} > k_{m} $, $ v_{x_{m+1}} \in F $ such that $ d(x_{m+1}, f_{w_{1} \dots w_{k_{m+1}}v_{x_{m+1}}}(x)) < \eta $
	and $ v_{x_{m+1}} $ does not begin with $ w_{k_{m+1}+1} $. Then we can get
	$ k_{j} > k_{n} $ with $ w_{k_{j}} = j $ and $ d(y, f_{w_{1} \dots w_{k_{j}}}(x)$.

	Now define $ \mathcal{O}: F \rightarrow \omega_{F_{w}}(x) $ by $ \mathcal{O}(w_{k_{i}+ 2} \dots w_{k_{m}}v_{x_{m}}) = x_{m} $,
	$ \mathcal{O}(w_{k_{i}+ 1}^{-1}) = \mathcal{O}(w_{k_{i}+ 2} \dots w_{k_{j}}) = y $.
	For $ u = w_{k_{i}+ 1}^{-1}v $, define $ \mathcal{O}(u) = f_{v}(y) $ and for all other $ u $ that has not yet been defined, we
	can choose $ z \in \omega_{w}(x) $ with $ d(z, f_{w_{1}\dots w_{k_{i}+1}u}(x)) < \eta $ and let $ \mathcal{O}(u) = z $.
 By construction $ \mathcal{O} $ is an $ \delta $-pseudo orbit.

	For the indexes, let $ u_{i} = w_{k_{i} + 1}^{-1} $, $ u_{j} = w_{k_{i} + 2} \dots w_{k_{j}} $, and $ u_{m} = w_{k_{i} + 2} \dots w_{k_{m}}v_{x_{m}} $.
	By construction $ \mathcal{O}(u_{m}) = x_{m} $ and $ u_{k} \neq u_{m} $ for $ k \neq m $,
	$ \mathcal{O}(u_{i}) = \mathcal{O}(u_{j}) = y $, $ u_{i}, u_{j} $ end in $ i, j $ respectively and $ u_{i}, u_{j} $ are not
	prefixes of each other or any $ u_{m} $.
\end{proof}

\begin{lem}
	Let $ w \in W_{\infty} $, $ x \in X $ be given. For a free monoid action, $ \omega_{F_{w}}(x) \in IBT^{\circ}$ with $ y \in \omega_{w}(x) $ final.
\end{lem}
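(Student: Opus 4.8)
The plan is to mirror the proof of the preceding lemma, making two adjustments dictated by the absence of inverses: the backward step $w_{k_i+1}^{-1}$ used there to plant a second copy of $y$ is replaced by a copy of $y$ found \emph{far out along $w$ in the forward direction}, and the two branching letters $i,j$ required for $i,j$-finality are replaced by the single requirement of the definition of \emph{final}, namely that $y=\mathcal O(e)=\mathcal O(u_y)$ with $u_y$ not a prefix of any of the chosen indices. Fix $y\in\omega_w(x)$; since $\omega_w(x)\subseteq\omega_{F_w}(x)$ we have $y\in\omega_{F_w}(x)$. Given $x_1,\dots,x_n\in\omega_{F_w}(x)$ and $\delta>0$, use uniform continuity of the $f_i$ to pick $0<\eta<\delta/3$ with $d(a,b)<\eta\Rightarrow d(f_i(a),f_i(b))<\delta/3$ for all $i\in S$, and let $N_\eta$ be as in Lemma~\ref{omegaFSufClose}.

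Next I would lay down the combinatorial skeleton. Since $y\in\omega_w(x)$, infinitely many prefixes $w_1\cdots w_k$ of $w$ satisfy $d(y,f_{w_1\cdots w_k}(x))<\eta$; fix such a $k_0>N_\eta$. For each $m$, using $x_m\in\omega_{F_w}(x)$ in its metric form, choose $u^{(m)}\in G$ agreeing with $w$ on an initial segment longer than anything chosen so far (and $>N_\eta$) with $d(f_{u^{(m)}}(x),x_m)<\eta$, and write $u^{(m)}=w_1\cdots w_{\ell_m}v_m$ with $\ell_m$ the length of the longest common prefix with $w$, so $v_m$ is empty or does not begin with $w_{\ell_m+1}$; choosing successively we may take $k_0<\ell_1<\cdots<\ell_n$. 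Finally pick $J>\ell_n$ (and larger than all $|u^{(m)}|$) with $d(y,f_{w_1\cdots w_J}(x))<\eta$, and set $u_y:=w_{k_0+1}\cdots w_J\in G$.

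Now define $\mathcal O: G\to\omega_{F_w}(x)$ as follows. Put $\mathcal O(e)=y$; for $k_0<j<J$ let $\mathcal O(w_{k_0+1}\cdots w_j)$ be a point of $\omega_{F_w}(x)$ within $\eta$ of $f_{w_1\cdots w_j}(x)$, which exists by Lemma~\ref{omegaFSufClose} since $w_1\cdots w_j$ is a prefix of $w$ of length $>N_\eta$; put $\mathcal O(u_y)=y$; along the branch at $\ell_m$ put $\mathcal O(w_{k_0+1}\cdots w_{\ell_m}v_m)=x_m$ with the intermediate nodes assigned points of $\omega_{F_w}(x)$ within $\eta$ of the matching orbit points of $x$ (again Lemma~\ref{omegaFSufClose}, the common prefix with $w$ being $w_1\cdots w_{\ell_m}$); for an index $u_y v$ put $\mathcal O(u_y v)=f_v(y)$, and for any remaining $u'v$ with $u'$ the longest already-defined ancestor put $\mathcal O(u'v)=f_v(\mathcal O(u'))$ --- these lie in $\omega_{F_w}(x)$ by its invariance. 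The estimate that $\mathcal O$ is a $\delta$-$G$-pseudo orbit is the same $\eta+\delta/3<\delta$ computation as in the previous lemmas (consecutive designated nodes are $\eta$-close to consecutive $x$-orbit points, the ``extension'' steps have error $0$, and at $u_y$ and at $e$ the planted value $y$ is itself $\eta$-close to the relevant orbit point); in particular $\omega_{F_w}(x)\in IBT$. By construction $\mathcal O$ attains each $x_m$ at $w_{k_0+1}\cdots w_{\ell_m}v_m$, $\mathcal O(e)=\mathcal O(u_y)=y$, and $u_y$ is not a prefix of any of these indices: $u_y$ follows $w$ for $J$ steps past $k_0$, while the $m$-th index leaves $w$ at step $\ell_m<J$ (or is strictly shorter than $u_y$). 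Hence $y$ is final and $\omega_{F_w}(x)\in IBT^{\circ}$.

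I expect the only genuine subtlety --- and the point where $w\in W_\infty$ and the structure of $\omega_{F_w}(x)$ are really used --- to be step two: each $x_m$ must be routed onto a branch whose divergence point $\ell_m$ is strictly below $J$, and simultaneously $y$ must reappear as $f_{w_1\cdots w_J}(x)$ for some $J$ past all those divergence points. Both are available because every element of $\omega_{F_w}(x)$ is approximated by $f_u(x)$ for $u$ agreeing with $w$ on an arbitrarily long initial segment, so the $\ell_m$ may be pushed out as far as needed, while $y\in\omega_w(x)$ forces arbitrarily long prefixes of $w$ itself to return near $y$. The lone degenerate case is when some $v_m$ is forced to be empty (so $x_m$ is approximated by a prefix of $w$); then one simply places $x_m$ directly on the spine, and $u_y$, being a strictly longer prefix of $w$, is still not a prefix of that index.
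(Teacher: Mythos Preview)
Your proof is correct and follows essentially the same strategy as the paper's: anchor $\mathcal O(e)$ at a prefix of $w$ that approximates $y$, lay the spine of $\mathcal O$ along $w$, branch off at successively deeper points $\ell_1<\cdots<\ell_n$ to hit each $x_m$, and terminate at a still farther prefix of $w$ that again approximates $y$, extending elsewhere by the action to stay inside the invariant set $\omega_{F_w}(x)$. Your version is in fact more carefully written than the paper's (which conflates the suffix word $u^y$ with the full index and writes $\omega_w(x)$ where $\omega_{F_w}(x)$ is needed for the range of $\mathcal O$), and your explicit handling of the ``$u_y$ not a prefix'' condition via $J>|u^{(m)}|$ together with the degenerate $v_m=\varnothing$ case is a clean touch the paper leaves implicit.
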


\begin{proof}
	Let $ x_{1}, \dots, x_{n} \in \omega_{F_{w}}(x) $, $ y \in \omega_{F_{w}}(x) $, and $ \delta > 0 $ be given.
	By the uniform continuity of the maps $ f_{i} $, find $ \frac{\delta}{3} > \eta > 0  $ such that if $ d(x,y) < \eta $ then $ d(f_{i}(x), f_{i}(y)) < \frac{\delta}{3} $
	for $ i \in S $.

	Now we can find $ w_{1} \dots w_{k} $ such that $ d(y, f_{w_{1} \dots w_{k}}(x)) < \eta $.
	Find $ k_{1} > k > N_{\eta}$ from Lemma \ref{omegaFSufClose} and $ u^{1} = u_{1} \dots u_{m} $ with $ u_{1} \neq w_{k_{1}+1} $ and $ d(x_{1}, f_{w_{1} \dots w_{k_{1}}u^{1}}(x)) < \eta$.
	Inductively find $ k_{i} > k_{i-1} $ and $ u^{k} = u_{1} \dots u_{m} $ with $ u_{1} \neq w_{k_{i}+1} $ and $ d(x_{i}, f_{w_{1} \dots w_{k_{i}}u^{k}}(x)) < \eta $. Finally, find $ k_{y} > k_{n} $ and $ u^{y} = u_{1} \dots u_{m} $ with $ u_{1} \neq w_{k_{n}+1} $ and
	$ d(y, f_{w_{1}\dots w_{k_{y}}u^{y}}, y) < \eta $.
 Define $ \mathcal{O}: H \rightarrow \omega_{w}(x) $ by $ \mathcal{O}(e) = \mathcal{O}(u^{y}) = y $, $ \mathcal{O}(u^{k}) = x_{k} $ and for
	all other $ u $ choose $ z \in \omega_{w}(x) $ so that $ d(f_{w_{1} \dots w_{k}u}(x), z) < \eta $.
	This satisfies the requirements for $ IBT^{\circ} $.
\end{proof}

\begin{thm}\label{IBTgroup}
	Suppose $ X \in \mathcal{A}^{F} $ is SFT with largest forbidden block size M, and let $ Y \subseteq X $ be IBT, invariant, and compact with some $ y \in Y $ i,j-final.
	Then $ Y = \omega_{F_{w}}(\bar{x}) $ for some $ w \in W $ and $ \bar{x} \in X $.
\end{thm}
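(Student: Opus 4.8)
The plan is to run, for $\omega_{F_w}$, the analogue of the construction in the proof of Theorem~\ref{CICT} (this is essentially the converse of the earlier lemma that $\omega_{F_w}(x)\in IBT^{*}$): replace the one-dimensional $\epsilon$-chains used there by the tree-shaped $\delta$-$G$-pseudo orbits furnished by the $IBT$ hypothesis, and use the $i,j$-final point to control how they are spliced together. Since $X$ is $M$-step and $Y$ is compact, for each $k\geq M+1$ I would first fix a finite $2^{-k}$-net $E_{k}$ of $Y$ and, applying the $i,j$-final property to the list $E_{k}\cup\{\bar y\}$ with $\delta=2^{-k}$, obtain a $2^{-k}$-$G$-pseudo orbit $\mathcal{O}_{k}\colon G\to Y$ together with pairwise distinct indices realizing the points of $E_{k}$ and two indices $p_{k},q_{k}$ ending in the fixed letters $i\neq j$ respectively, with $\mathcal{O}_{k}(p_{k})=\mathcal{O}_{k}(q_{k})=\bar y$ and with neither $p_{k}$ nor $q_{k}$ a prefix of the other or of any net index. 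Only the restriction of $\mathcal{O}_{k}$ to the finite subtree $T_{k}$ of $G$ spanned by (the prefixes of) these finitely many indices is used.

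Next I would glue the pieces $\mathcal{O}_{k}$ into a single $\delta$-$G$-pseudo orbit $\mathcal{O}\colon G\to Y$ along an infinite reduced word $w=w_{1}w_{2}\cdots\in W_{\infty}$, exactly in the spirit of Theorem~\ref{CICT}. Choosing checkpoints $0=l_{0}<l_{1}<l_{2}<\cdots$, the word $w$ is built so that $\mathcal{O}(w_{1}\cdots w_{l_{k}})=\bar y$ for every $k$, so that the support $T_{k+1}$ of the next piece is deposited in the cone of $G$ below $w_{1}\cdots w_{l_{k}}$, and so that $w$ then continues --- past the \emph{other} occurrence of $\bar y$ in that copy of $\mathcal{O}_{k+1}$ --- on to the next checkpoint. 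The two terminal letters $i\neq j$ play here the role that $i(x),t(x)$ played in Theorem~\ref{CICT}: since $i^{-1}\neq j^{-1}$, one of the two occurrences of $\bar y$ always ends in a letter whose inverse does not clash with the continuation we wish to attach, so the concatenation stays reduced; and since $p_{k},q_{k}$ are not prefixes of the net indices, the ray can leave each checkpoint without overrunning the deposited net portion or the pieces deposited later. Wherever $\mathcal{O}$ remains unspecified I would extend it by exact orbit segments, $\mathcal{O}(uv)=f_{v}(\mathcal{O}(u))$ with $u$ the longest already-defined prefix of $uv$; these stay in $Y$ because $Y$ is invariant, and they introduce no pseudo-orbit error.

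The remainder then follows Theorem~\ref{CICT} essentially verbatim. By construction $\mathcal{O}$ satisfies (1) $d(\sigma_{s}(\mathcal{O}(uv)),\mathcal{O}(uvs))<2^{-M-1}$ for all $u\in G$, $v\in\Sigma^{M}$, $s\in S$, and (2) for each $n$ there is $k_{n}$ with this distance $<2^{-n-1}$ once $|u|>k_{n}$, because the piece of mesh $2^{-k}$ sits at depth $l_{k}\to\infty$. Defining $\bar x(u)=\mathcal{O}(u)(e)$, property (1) and Lemma~\ref{shiftPseudoOrbit} force every $M$-block of $\bar x$ to be the central block of some $\mathcal{O}(v)\in X$, so $\bar x\in X$, while property (2) gives $d(\sigma_{u}(\bar x),\mathcal{O}(u))\to 0$ as $|u|\to\infty$. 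Then $\omega_{F_w}(\bar x)\subseteq Y$, since any $y'\in\omega_{F_w}(\bar x)$ is a limit of points $\sigma_{u_{j}}(\bar x)$ with $|u_{j}|\to\infty$, hence of points $\mathcal{O}(u_{j})\in Y$, and $Y$ is closed; and $Y\subseteq\omega_{F_w}(\bar x)$, since given $y'\in Y$, $\epsilon>0$ and $n$, taking $k$ with $l_{k}>n$ and $2^{-k}<\epsilon/2$, the copy of $\mathcal{O}_{k}$ deposited below $w_{1}\cdots w_{l_{k}}$ supplies an index $u$ with $u|_{n}=w|_{n}$, $|u|$ arbitrarily large, and $d(\mathcal{O}(u),y')<2^{-k}$, whence $d(\sigma_{u}(\bar x),y')<\epsilon$.

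The main obstacle is the gluing in the second step. In Theorem~\ref{CICT} the objects being concatenated were one-dimensional chains, and a single pair of boundary letters per point sufficed to keep the concatenation a reduced word; here each $\mathcal{O}_{k}$ is genuinely tree-shaped, and one must fit its finite support subtree into the cone hanging below the previous occurrence of $\bar y$ --- a cone whose root has one fewer child than $G$ does --- without creating a non-reduced word at the splice and without colliding with either the continuing ray or the pieces deposited later. This is precisely where plain $IBT$ is insufficient: the two free occurrences of $\bar y$ with distinct terminal letters are exactly what leaves a reduced splice available at every stage, and the ``not a prefix of any net index'' clause keeps the deposited subtrees out of the way. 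The invariance of $Y$ is likewise essential, as it is what permits $\mathcal{O}$ to be completed to all of $G$ with values still in $Y$.
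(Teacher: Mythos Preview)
Your proposal is correct and follows essentially the same approach as the paper. The paper makes the splice explicit --- setting $w_{n}=w_{n-1}(u_{i}^{M+n})^{-1}u_{j}^{M+n}$, so that the $u_{i}$-occurrence of $y$ in $\mathcal{O}_{M+n}$ is identified with the previous checkpoint and the ray exits through the $u_{j}$-occurrence --- and restricts each $\mathcal{O}_{n}$ to the set $\mathcal{D}_{n}$ of words in $\Sigma^{k_{n}}$ having neither $u_{i}^{n}$ nor $u_{j}^{n}$ as a proper prefix; but this is exactly the mechanism you describe, and your observations about why $i\neq j$ keeps $w$ reduced and why the non-prefix clause prevents collisions are precisely the content of the paper's well-definedness check.
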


\begin{proof}
	For $ n > M $, choose $ \left\{x_{1}^{n}, \dots, x_{k_{n}}^{n} \right\} \subseteq Y $ a $ 2^{-n} $ cover of $ Y $. By assumption, we can find a $ 2^{-n} $-pseudo orbit $ \mathcal{O}_{n} $
	with indexes $ u_{i}^{n}, u_{j}^{n} \in \Sigma^{k_{n}} $ ending in $ i $ and $ j $ respectively such that $ \mathcal{O}_{n}(u_{i}^{n}) =  \mathcal{O}_{n}(u_{j}^{n}) = y $
	and indexes $ u_{m}^{n} $ such that $ \mathcal{O}_{n}(u_{m}^{n}) = x_{m}^{n} $ and $ u_{i}^{n}, u_{j}^{n} $ are not prefixes of $ u_{m}^{n} $ for all $ m $.

	We will inductively construct a function $ \mathcal{O}: F \rightarrow Y $ and a word $ w \in W_{\infty} $.

	First we construct $ w $. Define $ w_{1} = u_{j}^{M+1} $ and for $ n > 1 $, $ w_{n} = w_{n-1}(u_{i}^{M+n})^{-1}u_{j}^{M+n} $. By induction we will prove $ w_{n} $ ends
	with $ j $ and begins with $ w_{n-1} $. Note $ u_{j}^{M+1} $ ends with $ j $ so $ w_{1} $ ends with $ j $. Suppose for our inductive step that $ w_{n-1} $ ends with $ j $.
	As $ u_{i}^{n} $ ends with $ i $ and is not a prefix of $ u_{j}^{n} $ and $ u_{j}^{n} $ ends with $ j $ we have $ (u_{i}^{n})^{-1}u_{j}^{n} $ beginning with $ i^{-1} $ and ending with $ j $.
	Therefore $ w_{n} $ ends with $ j $. Finally, because $ i \neq j $, $ w_{n} $ begins with $ w_{n-1} $. Define $ w = \lim_{n \rightarrow \infty} w_{n} $.

	For ease in constructing $ \mathcal{O} $, define $ \mathcal{O}'_{n} $ to be $ \mathcal{O}_{n} $ restricted to elements of $ \Sigma^{k_{n}} $ which do not have
	$ u_{i}^{n}, u_{j}^{n} $ as a proper prefix. Let $ \mathcal{D}_{n} $ be the domain of $ \mathcal{O}'_{n} $.
	For $ u \in \mathcal{D}_{M+1} $ define $ \mathcal{O}(u) = \mathcal{O}'_{M+1}(u) $.
	Then for $ u \in \mathcal{D}_{M+n} $ define $ \mathcal{O}(w_{n-1}(u_{i}^{M+n})^{-1}u) = \mathcal{O}_{M+n}'(u) $.
	We will show that this step is well-defined.
	Suppose for some $ n < m $ there is  $ v, v' $ in $ \mathcal{D}_{n}, \mathcal{D}_{m} $ respectively such that $ w_{n-1}(u_{i}^{M+n})^{-1}v = w_{m-1}(u_{i}^{M+m})^{-1}v' $.
	Write $  w_{m-1} = w_{n-1}(u_{i}^{M+n})^{-1}u_{j}^{M+n} \cdots (u_{i}^{M+m-1})^{-1}u_{j}^{M+m-1} $. \newline
	Thus $ (u_{i}^{M+n})^{-1}v = (u_{i}^{M+n})^{-1}u_{j}^{M+n} \cdots (u_{i}^{M+m-1})^{-1}u_{j}^{M+m-1}(u_{i}^{M+m})^{-1}v'$. \newline
	Therefore $ v = u_{j}^{M+n} \cdots (u_{i}^{M+m-1})^{-1}u_{j}^{M+m-1}(u_{i}^{M+m})^{-1}v' $.
	As $ v $ does not contain $ u_{j}^{M+n} $ as a proper prefix, $ v =  u_{j}^{M+n} $, $ m = n + 1 $ and $ v' = u_{i}^{M+n+1} $.
	In this case, $ \mathcal{O}(w_{n-1}(u_{i}^{M+n})^{-1}u_{j}^{M+n}) =  \mathcal{O}_{M+n}'(v) = y $ and
	$  \mathcal{O}(w_{n}(u_{i}^{M+n+1})^{-1}u_{i}^{M+n+1}) = \mathcal{O}_{M+n+1}'(u_{i}^{M+n+1}) = y $. Thus $ \mathcal{O} $, so far as it has been defined is well-defined.

	To complete the construction of $ \mathcal{O} $, for $ u \in F $ with $ \mathcal{O}(u) $ not already defined, let $ u' $ be the largest prefix of $ u $ with $ \mathcal{O}(u') $ already
	defined. Such $ u' $ always exists as $ \mathcal{O}(e) $ is already defined.
	Then define $ \mathcal{O}(u) = \sigma_{u'^{-1}u}(\mathcal{O}(u')) $.
	Thus we have defined $ \mathcal{O}: F \rightarrow Y $.

	From the construction, By our construction, $ \mathcal{O} $ has the properties that:
	\begin{enumerate}
		\item For $ u \in F $, $ v \in \Sigma^{M} $, and $ i \in S $, $ d(\sigma_{i}(\mathcal{O}(uv)), \mathcal{O}(uvi)) < 2^{-M-1} $.
		\item For every $ n \in N $ there is $ k_{n} \in \mathbb{N} $ such that for $ \left| u \right| > k_{n} $, $ v \in \Sigma^{n} $, and
		$ i \in S $, $ d(\sigma_{i}(\mathcal{O}(uv)), \mathcal{O}(uvi)) < 2^{-n-1} $.
	\end{enumerate}
	Just as in Theorem \ref{CICT}, by defining $ \bar{x} $ by $ \bar{x}(v) = \mathcal{O}(v)(e) $ these properties imply that
	$ \bar{x} \in X $ and $ d(\sigma(\bar{x}), \mathcal{O}(v)) < 2^{-n} $ for $ \left| v \right| > k_{n} $.

	Finally, we show that $ Y = \omega_{F_{w}}(\bar{x}) $. First $ \omega_{F_{w}}(\bar{x}) \subseteq Y $ as every N-block in $ \bar{x} $ is an N-block of an element of $ A $.
	Now let $ z \in Y $ and $ n \in \mathbb{N} $. We must find a $ u \in F $ with $ u_{|n} = w_{|n} $ and $ d(\sigma_{u}(\bar{x}), z) < \frac{1}{n} $.
	Note $ \left| w_{n} \right| \geq n $.
	Find $ x_{m}^{M+n+1} $ such that $ d(x_{m}^{M+n}, z) < 2^{-(M+n+1)} $. Let $ u = w_{n}(u_{i}^{M+n})^{-1}u_{m}^{M+n+1} $.
	Thus $ u_{|n} = w_{|n} $ and $ d(\sigma_{u}(\bar{x}), x_{m}^{M+n+1}) < 2^{-(M+n+1)} $. Therefore $  d(\sigma_{u}(\bar{x}), z) < 2^{-(M+n+1)} < \frac{1}{n} $.
	Thus $ Y = \omega_{F_{w}}(\bar{x}) $.
\end{proof}

Note that this construction depended only upon the properties of $ A \in IBT^{*} $. As long as one replaces the shift maps $ \sigma $ with
just a more general function $ f $, the construction works in all other $ IBT^{*} $ sets with $ F $-actions.

\begin{cor}
	Let $X$ be a shift of finite type over a free group $G$. Then $\mathfrak W_{F_w}=IBT^*$.
\end{cor}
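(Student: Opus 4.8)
The plan is to obtain the corollary by combining two results already proved in this section with the fact that $IBT$ sets are invariant; no new construction is needed.

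First I would establish $\mathfrak W_{F_w} \subseteq IBT^*$. A member of $\mathfrak W_{F_w}$ is, by definition, a set of the form $\omega_{F_w}(x)$ for some $w \in W_\infty$ and $x \in X$. The lemma immediately preceding Theorem~\ref{IBTgroup} states precisely that, for a free group action, every such $\omega_{F_w}(x)$ belongs to $IBT^*$, with an $i,j$-final point furnished by the $CICT$ structure of $\omega_w(x) \subseteq \omega_{F_w}(x)$. Note that this inclusion does not require $X$ to be a shift of finite type.

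Next I would establish $IBT^* \subseteq \mathfrak W_{F_w}$. Let $Y \in IBT^*$. By the definition of $IBT$ (and hence of $IBT^*$), $Y$ is a closed subset of $X$, so it is compact because $X$ is compact. Also $Y \in IBT$, and the theorem that every $IBT$ set is invariant gives that $Y$ is invariant. Finally, membership in $IBT^*$ provides a point $y \in Y$ that is $i,j$-final. Since $X$ is a shift of finite type, it has a largest forbidden block size $M$. Thus every hypothesis of Theorem~\ref{IBTgroup} is met, and that theorem yields $w$ and $\bar x \in X$ with $Y = \omega_{F_w}(\bar x)$; hence $Y \in \mathfrak W_{F_w}$.

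The only thing that needs care is checking that the hypotheses of Theorem~\ref{IBTgroup} are genuinely in hand: compactness from $X$ being compact and $Y$ closed, invariance from the invariance theorem for $IBT$ sets, and the $i,j$-final point directly from the definition of $IBT^*$. I do not anticipate any obstacle beyond this bookkeeping, since the substantive work was already carried by the preceding lemma and by Theorem~\ref{IBTgroup}.
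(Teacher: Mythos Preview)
Your proposal is correct and follows exactly the approach the paper intends: the corollary is stated without proof because it is immediate from the preceding lemma (giving $\mathfrak W_{F_w}\subseteq IBT^*$) together with Theorem~\ref{IBTgroup} (giving $IBT^*\subseteq \mathfrak W_{F_w}$), once one notes that $IBT^*$ sets are closed and invariant. Your verification that the hypotheses of Theorem~\ref{IBTgroup} are met is precisely the bookkeeping the paper leaves implicit.
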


	Using a slightly different, yet more straightforward, construction, we obtain the same result for monoid actions.

\begin{thm}\label{IBTmonoid}
	Suppose $ X \in \mathcal{A}^{H} $ is SFT with largest forbidden block size M, and let $ Y \subseteq X $ be IBT, invariant, and compact with some $ y \in Y $ final.
	Then $ Y = \omega_{F_{w}}(\bar{x}) $ for some $ w \in W_{\infty} $ and $ \bar{x} \in X $.
\end{thm}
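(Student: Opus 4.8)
The plan is to transcribe the construction of Theorem~\ref{IBTgroup} into the free monoid setting, where it becomes simpler because there are no inverses to track. A \emph{final} point $y\in Y$ comes with a single distinguished index $u_y$, and since $\mathcal{O}(e)=\mathcal{O}(u_y)=y$ with $u_y$ a prefix of none of the other indices, one can splice consecutive pseudo-orbits together by plain concatenation rather than by the reduced-word bookkeeping $w_n=w_{n-1}(u_i^{M+n})^{-1}u_j^{M+n}$ needed in the group case.

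First I would fix, for each $n>M$, a finite set $\{x_1^n,\dots,x_{k_n}^n\}\subseteq Y$ that $2^{-n}$-covers $Y$, and apply the definition of final to this list: this yields a $2^{-n}$-$G$-pseudo orbit $\mathcal{O}_n$ of $Y$, indices $u_m^n$ with $\mathcal{O}_n(u_m^n)=x_m^n$, and an index $u_y^n$ with $\mathcal{O}_n(e)=\mathcal{O}_n(u_y^n)=y$ and $u_y^n$ not a prefix of any $u_m^n$; since $k_n\ge 1$, this forces $u_y^n\ne e$, so $|u_y^n|\ge 1$. Setting $w_0=e$ and $w_n=w_{n-1}u_y^{M+n}$, each $w_{n-1}$ is a prefix of $w_n$, $|w_n|\ge n$, and the $w_n$ converge to a word $w\in W_\infty$. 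Letting $\mathcal{O}_n'$ be the restriction of $\mathcal{O}_n$ to those indices not having $u_y^n$ as a proper prefix, with domain $\mathcal{D}_n$, I would define $\mathcal{O}\colon H\to Y$ by $\mathcal{O}(w_{n-1}u)=\mathcal{O}_{M+n}'(u)$ for $u\in\mathcal{D}_{M+n}$, and then extend to all of $H$ by $\mathcal{O}(u)=\sigma_v(\mathcal{O}(u'))$, where $u'$ is the longest prefix of $u$ already assigned a value and $u=u'v$ (unambiguous in $H$). As in Theorem~\ref{IBTgroup}, $\mathcal{O}$ is well-defined: a coincidence $w_{n-1}u=w_{m-1}u'$ with $n<m$, $u\in\mathcal{D}_{M+n}$, $u'\in\mathcal{D}_{M+m}$ forces, using $w_{m-1}=w_{n-1}u_y^{M+n}\cdots u_y^{M+m-1}$ and the prefix condition defining $\mathcal{D}_{M+n}$, that $u=u_y^{M+n}$, $m=n+1$, $u'=e$, in which case both sides equal $y$.

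The remainder is a direct copy of the arguments in Theorems~\ref{CICT} and~\ref{IBTgroup}. From the pseudo-orbit estimates on the $\mathcal{O}_{M+n}$ and uniform continuity of the shift maps, $\mathcal{O}$ satisfies (1) $d(\sigma_i(\mathcal{O}(uv)),\mathcal{O}(uvi))<2^{-M-1}$ for all $u\in H$, $v\in\Sigma^M$, $i\in S$, and (2) for every $n$ there is $k_n$ with $d(\sigma_i(\mathcal{O}(uv)),\mathcal{O}(uvi))<2^{-n-1}$ whenever $|u|>k_n$, $v\in\Sigma^n$, $i\in S$. Defining $\bar x\colon H\to\mathcal{A}$ by $\bar x(u)=\mathcal{O}(u)(e)$, property~(1) with Lemma~\ref{shiftPseudoOrbit} shows every $M$-block of $\bar x$ is the central $M$-block of some $\mathcal{O}(v)\in X$, whence $\bar x\in X$, while property~(2) gives $d(\sigma_u(\bar x),\mathcal{O}(u))<2^{-n}$ for $|u|>k_n$. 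For $Y=\omega_{F_w}(\bar x)$: the inclusion $\omega_{F_w}(\bar x)\subseteq Y$ holds because for $u$ with $u_{|n}=w_{|n}$ and $|u|>k_n$ the point $\sigma_u(\bar x)$ lies within $2^{-n+1}$ of $\mathcal{O}(u)\in Y$ and $Y$ is closed. For the reverse inclusion, given $z\in Y$ and $\epsilon>0$, pick a level $p=M+j+1$ large enough that $2^{-p}<\epsilon/2$ and $|w_j|$ exceeds the threshold from~(2) appropriate to precision $\epsilon/2$, then a cover point $x_l^p$ with $d(x_l^p,z)<2^{-p}$, and set $u=w_j u_l^p$; since $w_j$ is a prefix of both $u$ and $w$ with $|w_j|\ge j$, we get $u_{|n}=w_{|n}$ for $n\le|w_j|$, and since $u_l^p\in\mathcal{D}_p$ we get $\mathcal{O}(u)=\mathcal{O}_p'(u_l^p)=x_l^p$, so $d(\sigma_u(\bar x),z)\le d(\sigma_u(\bar x),\mathcal{O}(u))+d(x_l^p,z)<\epsilon$; hence $z\in\omega_{F_w}(\bar x)$.

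The step needing the most care is the well-definedness of the glued function $\mathcal{O}$ — checking that the distinguished index $u_y^n$ genuinely isolates the grafted part $\mathcal{O}_n'$ so that distinct translates of the pieces agree wherever they overlap, always at the common value $y$. Beyond that, the only subtlety to keep honest is that $|w_n|\to\infty$, which is exactly what lets the prefix condition $u_{|n}=w_{|n}$ be met, and which is precisely why the elementary observation $u_y^n\ne e$ matters.
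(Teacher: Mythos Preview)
Your proposal is correct and follows essentially the same approach as the paper. The paper's proof is very terse: it sets $w=u_y^{M+1}u_y^{M+2}\cdots$, defines $\mathcal{O}(u)=\mathcal{O}_{m+1}(u')$ where $m$ is maximal with $u_y^{M+1}\cdots u_y^{m}$ a prefix of $u$ and $u'$ is the remainder, and then defers to the reasoning of Theorems~\ref{CICT} and~\ref{IBTgroup}; your version spells out the same gluing via restricted domains $\mathcal{D}_{M+n}$ and an explicit well-definedness check, which is just a different bookkeeping for the same ``maximal prefix'' idea (and makes your extension step $\mathcal{O}(u)=\sigma_v(\mathcal{O}(u'))$ superfluous, since every $u\in H$ already lies in some $w_{n-1}\mathcal{D}_{M+n}$).
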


\begin{proof}
	For $ n >M $, choose $ \left\{x_{1}^{n}, \dots, x_{k_{n}}^{n} \right\} \subseteq Y $ a $ 2^{-n} $ cover of $ Y $. By assumption,
	we can find a $ 2^{-n} $-pseudo orbit $ \mathcal{O}_{n} $
	with indexes $ u_{y}^{n}, u_{m}^{n} $ such that $ \mathcal{O}(e) = \mathcal{O}(u^{n}_{y}) = y$, $ \mathcal{O}_{n}(u_{m}^{n}) = x_{m}^{n} $
	and $ u_{y}^{n} $ is not a prefix of 	$ u_{m}^{n} $ for all $ m $.

	Define $ w = u_{y}^{M+1}u_{y}^{M+2} \dots $. For notation, define $ u_{y}^{0} = e $.
	For $ u \in H $, find the maximal $ m $ such that $ u = u_{y}^{0} \dots u_{y}^{m}u' $ and define $ \mathcal{O}(u) = \mathcal{O}_{m+1}(u') $.
	Letting $ \bar{x}(v) = \mathcal{O}(v)(e) $, the same reasoning as above gives $ \omega_{F_{w}}(\bar{x}) = Y $.
\end{proof}

\begin{cor}
	Let $X$ be a shift of finite type over a free monoid $H$. Then $\mathfrak W_{F_w}=IBT^\circ$.
\end{cor}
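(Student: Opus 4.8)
The plan is to prove the two inclusions $\mathfrak{W}_{F_w}\subseteq IBT^{\circ}$ and $IBT^{\circ}\subseteq\mathfrak{W}_{F_w}$ separately, each of which reduces immediately to a result already established above.

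For $\mathfrak{W}_{F_w}\subseteq IBT^{\circ}$, I would take an arbitrary member of $\mathfrak{W}_{F_w}$, which by definition has the form $\omega_{F_w}(x)$ for some $w\in W_{\infty}$ and $x\in X$, and apply the lemma stating that in a free monoid action $\omega_{F_w}(x)\in IBT^{\circ}$, witnessed by a final point lying in $\omega_w(x)$. This direction needs nothing beyond quoting that lemma, and in particular does not use that $X$ is a shift of finite type.

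For $IBT^{\circ}\subseteq\mathfrak{W}_{F_w}$, I would take $Y\in IBT^{\circ}$ and verify that it satisfies every hypothesis of Theorem \ref{IBTmonoid}. Being a closed subset of the compact space $X$, the set $Y$ is compact; being in $IBT$, it is invariant by the theorem asserting that $IBT$ sets are invariant; and the definition of $IBT^{\circ}$ directly supplies a final point $y\in Y$. Since $X$ is a shift of finite type over the free monoid $H$, it has a largest forbidden block size $M$, so all the hypotheses of Theorem \ref{IBTmonoid} are in place, and we obtain $Y=\omega_{F_w}(\bar x)$ for some $w\in W_{\infty}$ and $\bar x\in X$; hence $Y\in\mathfrak{W}_{F_w}$.

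I do not expect a genuine obstacle: the corollary is essentially the packaging of the monoid $IBT^{\circ}$ lemma together with Theorem \ref{IBTmonoid}, parallel to how the corollary following Theorem \ref{IBTgroup} combines the group $IBT^{*}$ lemma with that theorem. The only points deserving a word of care are that invariance, which Theorem \ref{IBTmonoid} requires, is not part of the bare definition of $IBT^{\circ}$ but is automatic for every $IBT$ set, and that the existence of the largest forbidden block size $M$ is exactly the standing hypothesis that $X$ is an SFT.
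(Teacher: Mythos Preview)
Your proposal is correct and matches the paper's intended argument: the corollary is stated without proof precisely because it is the immediate combination of the monoid lemma $\omega_{F_w}(x)\in IBT^{\circ}$ with Theorem \ref{IBTmonoid}, exactly as you describe. Your explicit verification that $Y\in IBT^{\circ}$ is compact and invariant (the latter via the theorem that every $IBT$ set is invariant) fills in the only small gap between the definition of $IBT^{\circ}$ and the hypotheses of Theorem \ref{IBTmonoid}.
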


%%%%%%%%%%%%%%%%%%%%%%%%%%%%%%%%%%%%%%%%%%%%%%%%
%%%%%%%%%%%%%%%%%%%%%%%%%%%%%%%%%%%%%%%%%%%%%%%%
%%%%%%%%%%%%%%%%%%%%%%%%%%%%%%%%%%%%%%%%%%%%%%%%
%%%%%%%%%%%%%%%%%%%%%%%%%%%%%%%%%%%%%%%%%%%%%%%%
%%%%%%%%%%%%%%%%%%%%%%%%%%%%%%%%%%%%%%%%%%%%%%%%
%%%%%%%%%%%%%%%%%%%%%%%%%%%%%%%%%%%%%%%%%%%%%%%%

\section{Shadowing in Group Actions}

In the previous section, we developed some connections between limit sets and internal transitivity properties in the context of shifts of finite type over finitely generated free groups. It is not surprising, given the results concerning these relationships in $\Z$- and $\N$-actions \cite{MR, BDG-tentmaps, BGKR-omega, GoodMeddaugh-ICT} that we are able to find analogous results outside of shift spaces. In order to demonstrate these properties, we first need to have an appropriate notion of shadowing for group actions.

\begin{definition}
	For $ \epsilon > 0 $ a function $ \mathcal{O}: G \rightarrow ֤֤֤X $ is \emph{$ \epsilon $-shadowed by $x \in X $}
	if $ d(f_{u}(x), \mathcal{O}(u)) < \epsilon $ for all $ u \in G $.
\end{definition}

\begin{definition}
	A $G$-action on a compact metric space $X$ has the \emph{G-shadowing property} if for every $ \epsilon > 0 $ there exists $ \delta > 0 $ such that every $ \delta $-$G$-pseudo orbit
	is $ \epsilon $-shadowed by a point in $ X $.
\end{definition}

\begin{lem}
	If $ X $ is not an SFT, then for every $ n \in \mathbb{N} $ there is $ m > n $ such that there is a forbidden m-block of $ X $ such that
	every sub-block is not forbidden.
\end{lem}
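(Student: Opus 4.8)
The plan is to argue directly, exploiting the canonical presentation of a subshift: write $X = X_{\mathcal F}$ where $\mathcal F$ is the \emph{maximal} set of forbidden blocks, i.e. $\mathcal F = \{B : B \text{ is a } k\text{-block for some } k \text{ and } B \text{ occurs in no point of } X\}$. For $k\in\mathbb{N}$ set $\mathcal F_{\le k} = \{B\in\mathcal F : |B|\le k\}$. Since $G$ is free on finitely many generators, each $\Sigma^{k}$ is finite, so over the finite alphabet $\mathcal A$ there are only finitely many $k$-blocks; hence $\mathcal F_{\le k}$ is finite. Also $\mathcal F_{\le k}\subseteq\mathcal F$, so $X = X_{\mathcal F}\subseteq X_{\mathcal F_{\le k}}$ for every $k$.

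First I would record the elementary fact that $X$ is an SFT if and only if $X = X_{\mathcal F_{\le k}}$ for some $k$. If $X = X_{\mathcal F_{\le k}}$ then $X$ is an SFT since $\mathcal F_{\le k}$ is finite. Conversely, if $X = X_{\mathcal G}$ for some finite $\mathcal G$ and $k=\max_{B\in\mathcal G}|B|$, then no block of $\mathcal G$ occurs in any point of $X = X_{\mathcal G}$, so $\mathcal G\subseteq\mathcal F$, hence $\mathcal G\subseteq\mathcal F_{\le k}$, and therefore $X = X_{\mathcal G}\supseteq X_{\mathcal F_{\le k}}\supseteq X_{\mathcal F} = X$, forcing $X = X_{\mathcal F_{\le k}}$.

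Now fix $n\in\mathbb{N}$. Since $X$ is not an SFT, the previous step gives $X\neq X_{\mathcal F_{\le n}}$, and since $X\subseteq X_{\mathcal F_{\le n}}$ there is a point $x\in X_{\mathcal F_{\le n}}\setminus X$. As $x\notin X = X_{\mathcal F}$, the point $x$ contains at least one forbidden block; among all forbidden blocks occurring in $x$, choose one, say $B$, of least possible size $m$. Because $x\in X_{\mathcal F_{\le n}}$, the point $x$ contains no forbidden block of size $\le n$, so $m > n$. Finally, let $B'$ be any proper sub-block of $B$. Then $B'$ occurs in $x$ (any point containing $B$ contains each of its sub-blocks) and $|B'| < m$, so minimality of $m$ forces $B'$ to be non-forbidden. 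Thus $B$ is a forbidden $m$-block with $m>n$ all of whose proper sub-blocks are non-forbidden, which is exactly the assertion.

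The only genuinely delicate point is the bookkeeping around "sub-block" in the free-group setting: one must check that restricting the $m$-block $B$ to a translate $v\,\Sigma^{k}\subseteq\Sigma^{m}$ yields a $k$-block that occurs in every point in which $B$ occurs — that is, that $\sigma_{u}(x)|_{\Sigma^{m}}=B$ forces $\sigma_{uv}(x)|_{\Sigma^{k}}$ to agree with this restriction. This reduces to $x((uv)w)=x(u(vw))$ together with $|vw|<m$, with cancellation causing no difficulty. Everything else — finiteness of $\mathcal F_{\le k}$ and the SFT characterisation — is routine.
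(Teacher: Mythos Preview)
Your argument is correct. Both your proof and the paper's rest on the same underlying observation---that $X$ is an SFT precisely when the truncated forbidden set $\mathcal F_{\le k}$ already defines $X$ for some $k$---but you organise it differently. The paper argues the contrapositive: assuming a bound $m$ on the sizes of ``minimal'' forbidden blocks, it sets $\mathcal F' = \mathcal F_{\le m}$ and shows $X_{\mathcal F'} = X_{\mathcal F}$ by an induction (every forbidden block of size $>m$ contains a smaller forbidden sub-block, hence eventually one of size $\le m$). You instead go direct: from $X \subsetneq X_{\mathcal F_{\le n}}$ you produce an explicit witness point $x$ and extract a minimal-size forbidden block occurring in it. Your choice to work with the \emph{maximal} $\mathcal F$ (all blocks missing from $X$) is a tidy normalisation; it makes ``not forbidden'' synonymous with ``occurs in some point of $X$,'' which is exactly what the subsequent application in the paper needs, whereas the paper's proof leaves the choice of $\mathcal F$ unspecified. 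Substantively the two arguments are equivalent, just dual in presentation.
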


\begin{proof}
	We will prove the contrapositive. Write $ X = X_{\mathcal{F}} $ for some set of forbidden blocks $ \mathcal{F} $.
	Let $ m $ be the largest integer such that there is an m-block $ B \in \mathcal{F} $ such that every sub-block of
	$ B $ is not in $ \mathcal{F} $. Let $ \mathcal{F}' = \left\{B \in \mathcal{F} : B \; \text{is a k-block for } \; k \leq m \right\} $.
	Note that $ \mathcal{F}' $ is finite as there only finitely many k-blocks for $ k \leq m $.
	We claim that $ X_{\mathcal{F}} = X_{\mathcal{F}'} $.
	As $ \mathcal{F}' \subseteq \mathcal{F} $, $ X_{\mathcal{F}'} \supseteq X_{\mathcal{F}} $.
	Now suppose $ x \notin X_{\mathcal{F}} $. Thus $ x $ contains a forbidden k-block $ B $ in $ \mathcal{F} $.
	If $ k \leq m $, $  B \in \mathcal{F}' $ so $ x \notin X_{\mathcal{F}'} $.
	If $ k > m $ then $ B $ contains a forbidden l-block for $ l < k $. By induction, $ B $ contains a forbidden l-block for $ l \leq m $.
	In either case, $ x $ contains a block forbidden in $ \mathcal{F}' $ so $ x \notin X_{\mathcal{F}'} $.
	As $ X = X_{\mathcal{F}} = X_{\mathcal{F}'} $, $ X $ is SFT.
\end{proof}

\begin{lem}
	Suppose $ \mathcal{O} $ is a $ 2^{-m} $-pseudo orbit. Then for $ u \in F $ and $ v \in \Sigma^{m-1} $, $ \mathcal{O}(u)(v) = \mathcal{O}(uv)(e) $.
\end{lem}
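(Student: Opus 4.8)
The plan is to recognize this statement as an immediate specialization of Lemma \ref{shiftPseudoOrbit}. By the definition of a $\delta$-$G$-pseudo orbit with $\delta = 2^{-m}$, the function $\mathcal{O}$ satisfies $d(\sigma_i(\mathcal{O}(w)), \mathcal{O}(wi)) < 2^{-m}$ for \emph{every} $w \in F$ and every $i \in S$. Fixing $u \in F$ and restricting attention to the words of the form $w = uv$ with $v \in \Sigma^{m-1}$, this is exactly the hypothesis required in Lemma \ref{shiftPseudoOrbit}: for each such $v$ and each $i \in S$, $d(\sigma_i(\mathcal{O}(uv)), \mathcal{O}(uvi)) < 2^{-m}$. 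Applying Lemma \ref{shiftPseudoOrbit} then yields $\mathcal{O}(u)(v) = \mathcal{O}(uv)(e)$, which is the desired conclusion. So my proof is essentially one sentence citing the earlier lemma.

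If a self-contained argument is preferred, I would reproduce the telescoping estimate: write $v = v_0 \cdots v_n$ with $|v| = n+1 < m-1$, and show by induction on $j \le n$ that $d(\sigma_{v_0 \cdots v_j}(\mathcal{O}(u)), \mathcal{O}(uv_0\cdots v_j)) < 2^{-m+j}$. The inductive step applies $\sigma_{v_{j}}$ to the previous estimate (which at most doubles the metric, since $d(x,y)=2^{-k}$ implies $d(\sigma_i(x),\sigma_i(y))\le 2^{-k+1}$), then uses the pseudo-orbit bound $d(\sigma_{v_j}(\mathcal{O}(uv_0\cdots v_{j-1})), \mathcal{O}(uv_0\cdots v_j)) < 2^{-m}$ together with the fact that $d$ is an ultrametric (agreement on $\Sigma^n$ is transitive) to combine the two estimates without loss. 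Taking $j = n$ and using $n \le m-3$ gives $d(\sigma_v(\mathcal{O}(u)), \mathcal{O}(uv)) < 2^{-m+n} \le 2^{-1}$, so $\sigma_v(\mathcal{O}(u))$ and $\mathcal{O}(uv)$ agree on $\Sigma^1 = \{e\}$; since $\sigma_v(\mathcal{O}(u))(e) = \mathcal{O}(u)(v)$ by definition of the shift action, this gives $\mathcal{O}(u)(v) = \mathcal{O}(uv)(e)$.

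There is no real obstacle here. The only point needing care is the index bookkeeping: one must check that the restriction $v \in \Sigma^{m-1}$ keeps the accumulated error $2^{-m+|v|-1}$ strictly below $2^{-1}$, which is precisely what forces the two elements to share their central coordinate. I would present the short version — a direct appeal to Lemma \ref{shiftPseudoOrbit} — as the actual proof, since the telescoping is already carried out there.
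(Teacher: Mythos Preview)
Your proposal is correct and matches the paper's own proof exactly: the paper simply states that the result follows from the same argument as Lemma~\ref{shiftPseudoOrbit}, which is precisely your short version. Your optional self-contained telescoping argument also mirrors the proof of Lemma~\ref{shiftPseudoOrbit} itself, so there is nothing to add.
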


\begin{proof}
	This result follows from the same argument as Lemma \ref{shiftPseudoOrbit}.
\end{proof}

\begin{thm}
	A shift space $ X $ is an SFT if and only if $ X $ has the shadowing property.
\end{thm}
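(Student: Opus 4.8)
The plan is to prove the two implications separately, using the two lemmas immediately preceding the theorem statement together with the definitions of shadowing and pseudo-orbit. Throughout, recall that in a shift space the metric is the standard one, so $d(x,y) \le 2^{-m}$ exactly when $x$ and $y$ agree on $\Sigma^m$, and that a $2^{-m}$-pseudo-orbit $\mathcal{O}$ satisfies $\mathcal{O}(u)(v) = \mathcal{O}(uv)(e)$ for $v \in \Sigma^{m-1}$ by the second preliminary lemma.

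For the forward direction, suppose $X$ is an SFT, say $M$-step, so every forbidden block is an $M$-block. Given $\epsilon > 0$, choose $m$ with $2^{-m+M} < \epsilon$ and set $\delta = 2^{-m}$; I claim every $\delta$-$G$-pseudo-orbit $\mathcal{O}$ is $\epsilon$-shadowed. Define $x \colon G \to \mathcal{A}$ by $x(u) = \mathcal{O}(u)(e)$. First, $x \in X$: by the pseudo-orbit lemma, for any $u \in G$ the $M$-block $\sigma_u(x)|_{\Sigma^M}$ equals the central $M$-block of $\mathcal{O}(u) \in X$ (this needs $m-1 \ge M$, i.e. we enlarge $m$ if necessary), hence is not forbidden; since all forbidden blocks are $M$-blocks, $x$ contains no forbidden block. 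Second, $x$ shadows $\mathcal{O}$: for $u \in G$ and $v \in \Sigma^{m-1}$, the lemma gives $\sigma_u(x)(v) = x(uv) = \mathcal{O}(uv)(e) = \mathcal{O}(u)(v)$, so $\sigma_u(x)$ and $\mathcal{O}(u)$ agree on $\Sigma^{m-1}$ and thus $d(\sigma_u(x), \mathcal{O}(u)) \le 2^{-m+1} < \epsilon$. (I would be slightly careful with the exact exponent, but any choice of $m$ making $2^{-m+1} < \epsilon$ works; the statement ``$x \in X$'' only requires $m-1 \ge M$.)

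For the converse, I argue the contrapositive: if $X$ is not an SFT, then $X$ fails the shadowing property. Fix any $\epsilon$ with $\epsilon < 1$, say $\epsilon = 2^{-1}$, and let an arbitrary $\delta > 0$ be given; write $\delta > 2^{-m}$. By the first lemma above (applied to $n = m$), there is some $m' > m$ and a forbidden $m'$-block $B$ of $X$ all of whose sub-blocks are allowed. The idea is to build a $2^{-m}$-pseudo-orbit $\mathcal{O}$ that is ``forced'' to produce the forbidden block $B$ under any shadowing point, which is impossible since that point would have to lie in $X$. Concretely: since every proper sub-block of $B$ is non-forbidden, one can find for each $u$ in a suitable finite ball an element $\mathcal{O}(u) \in X$ whose central portion realizes the appropriate sub-block of $B$ shifted to $u$, arranged so that consecutive values are compatible to within $2^{-m}$ (the pseudo-orbit condition involves only comparing $\sigma_i(\mathcal{O}(u))$ and $\mathcal{O}(ui)$ on $\Sigma^m$, which lands well inside the sub-blocks of $B$); outside that ball extend $\mathcal{O}$ arbitrarily into $X$ respecting the pseudo-orbit bound. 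If $x$ were to $2^{-1}$-shadow $\mathcal{O}$, then on the relevant ball $\sigma_u(x)$ agrees with $\mathcal{O}(u)$ enough to force $x$ to contain $B$, contradicting $x \in X$.

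The main obstacle is the converse, and specifically the bookkeeping in constructing the pseudo-orbit $\mathcal{O}$: one must pin down exactly which finite region of $G$ needs to carry the ``forbidden'' information, check that assigning to each $u$ in that region the shift (by $-u$) of some allowed sub-block of $B$ — padded out to a full configuration in $X$ using that every sub-block of $B$ is allowed — genuinely satisfies $d(\sigma_i(\mathcal{O}(u)), \mathcal{O}(ui)) < 2^{-m} \le \delta$ for all generators $i$, and finally verify that $\epsilon = 2^{-1}$-shadowing really does force the full block $B$ to appear in the shadowing point (so that the radius of the region and the size $m'$ must be chosen compatibly with $m$). The forward direction is essentially a repackaging of Lemma~\ref{shiftPseudoOrbit} and is routine.
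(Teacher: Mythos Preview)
Your proposal is correct and follows essentially the same route as the paper in both directions: define $x(u)=\mathcal{O}(u)(e)$ and invoke the pseudo-orbit lemma for the forward implication, and for the converse use the ``large forbidden block with allowed sub-blocks'' lemma to manufacture a pseudo-orbit whose shadow would have to contain that block. The one refinement worth noting is that the paper sidesteps the ``extend $\mathcal{O}$ arbitrarily outside a ball while respecting the pseudo-orbit bound'' issue you flag as the main obstacle: it chooses points $x_1,x_i\in X$ (for $i\in S$) realizing the central and $i$-centered $(k-1)$-sub-blocks of $B$, and sets $\mathcal{O}(e)=x_1$, $\mathcal{O}(iu)=\sigma_u(x_i)$ globally, so that away from the identity consecutive values are exact shifts of a single point and the pseudo-orbit condition holds with distance $0$; only the comparisons at the origin need checking, and these come directly from the overlap of the sub-blocks.
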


\begin{proof}
	Suppose $ X $ is an M-step SFT and let $ \epsilon > 0 $ be given. Choose $ k > M $ so $ 2^{-k} < \epsilon $.
	We claim every $ 2^{-k - 1} $-pseudo orbit can be $ \epsilon $-shadowed. Let $ \mathcal{O} $ be such a pseudo orbit.
	Construct $ x \in \mathcal{A}^{F} $ by $ x(u) = \mathcal{O}(u)(e) $.
	Let $ u \in F $ and $ v \in \Sigma^{k} $.
	By definition, $ \sigma_{u}(x)(v) = x(uv) = \mathcal{O}(uv)(1) $. By the previous lemma, the right-hand term equals $ \mathcal{O}(u)(v) $.
	Thus $ \sigma_{u}(x)_{|\Sigma^{k}} = \mathcal{O}(u)_{|\Sigma^{k}} $ so $ d(\sigma_{u}(x), \mathcal{O}(u)) < 2^{-k} $.
	This implies $ x $ $ \epsilon $-shadows $ \mathcal{O} $.Furthermore, $ x \in X $ as every M-block in $ x $ is an M-block in an element
	of $ X $, thus $ x $ contains no forbidden blocks.

	Now suppose $ X $ is not an SFT. Let $ \epsilon = 2^{-1} $ and suppose $ X $ has the shadowing property.
	Thus there is a $ \delta > 0 $ such that every $ \delta $-pseudo orbit can be $ \epsilon $-shadowed.
	Choose $ m $ with $ 2^{-m} < \delta $.
	By a previous lemma, there is a $ k > m + 2 $ such that $ X $ has a forbidden k-block $ B $ with all sub-blocks of $ B $ not forbidden.
	For $ i \in S $ let $ B_{i} $ be the (k-1)-block of $ B $ centered at $ i $.
	As these are not forbidden, there exists an $ x_{i} \in X $ such that $ x_{i |\Sigma^{k-1}} = B_{i} $.
	Let $ B_{1}  = B_{|\Sigma^{k-1}} $ and $ x_{1} \in X $ such that $ x_{1|\Sigma^{k-1}} = B_{1} $.
	For $ u \in F $, define $ \mathcal{O}(iu) = \sigma_{u}(x_{i}) $ and $ \mathcal{O}(e) = x_{1} $.
	For $ i \in S $, $ d(\sigma_{i^{-1}}(x_{i}), x_{1}) < 2^{-k + 2} $ as $ B_{i} $ and $ B_{0} $ overlap on a (k-2)-block.
	For $ i,j \in S $ and $ u \in F $ with $ iuj \neq 1 $, $ \mathcal{O}(iuj) = \sigma_{uj}(x_{i}) = \sigma_{j}(\sigma_{u}(x_{i})) =
	\sigma_{j}(\mathcal{O}(iu)) $. Therefore $ \mathcal{O} $ is a $ \delta $-pseudo orbit.
	Suppose that $ x \in X $ $ \epsilon $-shadows $ \mathcal{O} $.
	Then for $ u \in F $, $ d(\sigma_{u}(x), \mathcal{O}(u) )  < \epsilon $.
	Particularly, this implies $ x(u) = \mathcal{O}(u)(e) $.
	We claim that $ x $ contains $ B $.
	Clearly, $ x(e) = B(e) $. For $ i \in S $ and $ u \in \Sigma^{k-1} $,
	$ x(iu) = \sigma_{iu}(x)(e) = \mathcal{O}(iu)(e) = \sigma_{u}(x_{i})(e) = x_{i}(u) = \mathcal{O}(i)(u) = B_{i}(u) = B(iu) $.
	Therefore our claim is correct, hence $ \mathcal{O} $ cannot be $ \epsilon $-shadowed, contradicting the assumption of
	$ X $ having the shadowing property.
\end{proof}

\begin{thm}\label{CICTgeneral}
	For an $G$-action on $X$ with $G$-shadowing, then $CICT=\overline{\mathfrak{W}_w}$.
\end{thm}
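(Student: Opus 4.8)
The plan is to prove the two inclusions of $CICT=\overline{\mathfrak{W}_w}$ separately. The inclusion $\overline{\mathfrak{W}_w}\subseteq CICT$ is immediate from what has already been established: $\mathfrak{W}_w\subseteq CICT$ and $CICT$ is closed, so $\overline{\mathfrak{W}_w}\subseteq\overline{CICT}=CICT$. This requires no shadowing hypothesis. All the work is in the reverse inclusion $CICT\subseteq\overline{\mathfrak{W}_w}$.

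For that inclusion, fix $Y\in CICT$ and $\epsilon>0$; it suffices to produce $x\in X$ and $w\in W_\infty$ with $d_H(\omega_w(x),Y)<2\epsilon$, since $\epsilon$ is arbitrary and each $\omega_w(x)\in\mathfrak{W}_w$. First use $G$-shadowing to get $\delta_0>0$ so that every $\delta_0$-$G$-pseudo orbit is $\epsilon$-shadowed, and then (by uniform continuity of the finitely many maps $f_s$, $s\in S$) shrink to $0<\delta\le\delta_0$ with $d(p,q)<\delta\Rightarrow d(f_s(p),f_s(q))<\delta_0$ for all $s\in S$. By compactness choose a finite $\delta$-dense subset $\{a_1,\dots,a_k\}$ of $Y$. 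Since $Y\in CICT$, for each $i$ (read cyclically, so $a_{k+1}=a_1$) fix a $\delta$-chain in $Y$ from $a_i$ to $a_{i+1}$ indexed by a word $v_i\in W$ beginning with $i(a_i)$ and ending with $t(a_{i+1})$. Because $t(a_{i+1})\ne i(a_{i+1})^{-1}$, the infinite word $w$ obtained by concatenating $v_1,\dots,v_k$ and repeating forever is a legitimate reduced element of $W_\infty$; concatenating the chains (identifying shared endpoints) produces a sequence $z_0,z_1,z_2,\dots$ in $Y$ with $d(f_{w_{m+1}}(z_m),z_{m+1})<\delta$ for every $m$, in which each $a_i$ appears infinitely often.

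Now extend to a $\delta_0$-$G$-pseudo orbit $\mathcal{O}\colon G\to X$ by $\mathcal{O}(u)=f_{r(u)}(z_{n(u)})$, where $w|_{n(u)}$ is the longest common prefix of $u$ and $w$ and $r(u)=(w|_{n(u)})^{-1}u$; note $\mathcal{O}(w|_m)=z_m\in Y$. A short case analysis on the effect of appending a generator $i\in S$ to $u$ shows that every transition $\mathcal{O}(u)\to\mathcal{O}(ui)$ is one of: an exact identity $\mathcal{O}(ui)=f_i(\mathcal{O}(u))$ (whenever appending $i$ does not change the common prefix with $w$); a ``forward'' step $w|_m\to w|_{m+1}$, bounded by $\delta<\delta_0$ by construction; or a ``backward'' step $w|_m\to w|_{m-1}$ with $i=w_m^{-1}$, for which $d(f_{w_m^{-1}}(z_m),z_{m-1})=d\big(f_{w_m^{-1}}(z_m),f_{w_m^{-1}}(f_{w_m}(z_{m-1}))\big)<\delta_0$ by the choice of $\delta$. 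Hence $\mathcal{O}$ is a $\delta_0$-$G$-pseudo orbit; let $x\in X$ be a point that $\epsilon$-shadows it. Every point of $\omega_w(x)$ is an accumulation point of $(f_{w_1\cdots w_m}(x))_m$, each term within $\epsilon$ of $z_m\in Y$, so $d(y,Y)\le\epsilon$ for all $y\in\omega_w(x)$. Conversely, for each $i$ there are infinitely many $m$ with $z_m=a_i$, so some subsequence of $(f_{w_1\cdots w_m}(x))_m$ accumulates within $\epsilon$ of $a_i$, giving $d(a_i,\omega_w(x))\le\epsilon$; combined with $\delta$-density of $\{a_i\}$ this yields $d(y',\omega_w(x))<2\epsilon$ for all $y'\in Y$. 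Therefore $d_H(\omega_w(x),Y)<2\epsilon$, completing the inclusion.

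The main obstacle is the pseudo orbit construction and its verification: a $G$-pseudo orbit must be defined on all of $G$, and in the free group case this forces the ``backward'' transition $\mathcal{O}(w|_m)\to\mathcal{O}(w|_{m-1})$, which — unlike in the $\mathbb{N}$- or $\mathbb{Z}$-setting — is not controlled by the forward chain estimates alone; the fix is to build the chains with the sharper tolerance $\delta$ and recover the $\delta_0$ bound via uniform continuity of $f_{w_m^{-1}}$. The second essential point is that the index word $w$ must be genuinely reduced, which is exactly where passing from $ICT$ to $CICT$ (the compatible letters $i(x)\ne t(x)^{-1}$) is needed — the earlier example shows $ICT$ alone cannot force a limit set along a single infinite word. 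In the free monoid case there are no inverses, so the backward transition does not arise and the reducedness condition is vacuous; the argument goes through unchanged (and there $CICT=ICT$).
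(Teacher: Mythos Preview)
Your proof is correct and follows the same overall strategy as the paper: thread $\delta$-chains through a finite dense subset of $Y$ using the $CICT$ data to produce a reduced infinite word $w$ and a sequence $(z_m)$ in $Y$, extend to a full $G$-pseudo orbit by pushing forward from the longest common prefix with $w$, shadow, and bound $d_H(\omega_w(x),Y)$. Two differences are worth noting. First, you use a single fixed tolerance and a periodic word, whereas the paper concatenates chains through successively finer covers so that the tolerance decreases along $w$; your version is simpler for this theorem, but the paper's asymptotic construction is deliberately set up to be reused verbatim in the later weak-asymptotic-shadowing result, which a periodic pseudo-orbit would not serve. Second, you are explicitly careful about the backward transition $w|_m\mapsto w|_{m-1}$ in the free-group case, shrinking $\delta$ so that uniform continuity of $f_{w_m^{-1}}$ converts the forward chain estimate into the required pseudo-orbit bound --- a point the paper's proof passes over silently. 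One triviality: to get $d(y',\omega_w(x))<2\epsilon$ from $\delta$-density together with the $\epsilon$ estimate, you should also impose $\delta<\epsilon$ when you shrink; this is harmless.
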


\begin{proof}
 Let $ Y \in CICT $. For $ n \in \mathbb{N} $, find $ \delta_{n} $ such that any $ \delta_{n} $-pseudo orbit can $ \frac{1}{n} $ shadowed.
	Define $ \mathcal{O} $ as follows.
	Fix $ k_{0} > \frac{1}{\delta_{n}} $. For $ k > k_{0} $ let $ \left\{x^{k}_{i}\right\}_{i=0}^{n_{k}} \subseteq Y $ be sequence that $ \frac{1}{k} $ covers $Y$.
	As $ Y $ is CICT, there is a $ \frac{1}{k} $-chain from $x^{k}_{i} $ to $ x^{k}_{i+1} $ indexed by $ u_{i} $ where $ u_{i} $
	begins with $ i(x^{k}_{i}) $ and ends with $ t(x^{k}_{i+1}) $.
	By concatenating these chains, we can get a $ \frac{1}{k} $-chain $ \left\{y^{k}_{0}, \dots, y^{k}_{n_{k}} \right\} $ from $ x^{k}_{0} $ to $ x^{k}_{n_{k}} $ indexed by
	$ w_{k} = v^{k}_{1} \dots v^{k}_{m_{k}} $ such that $ v^{k}_{1} = i(x^{k}_{0}) $, $ v^{k}_{m_{k}} = t(x^{k}_{n_{k}}) $, and for every $ i $ there is an $ n $ such that $ x^{k}_{i} = y^{k}_{n} $.
	We also have a $ \frac{1}{k+1} $-chain $ \left\{z^{k}_{0}, \dots, z^{k}_{l_{k}} \right\} $ from $ x^{k}_{n_{k}} $ to $ x^{k+1}_{0} $ indexed by $ w'_{k} = v'_{1} \dots v'_{l_{k}} $ with $ v'_{1}  = i(x^{k}_{n_{k}}) $ and $ v'_{l_{k}} = i(x^{k+1}_{0}) $.

	Concatenating $ w = w_{k_{0}}w'_{k_{0}}w_{k_{0}+1}w'_{k_{0} + 1} \dots $ and \newline $ \left\{y^{k_{0}}_{0}, \dots, y^{k_{0}}_{n_{k_{0}}}, z^{k_{0}}_{1}, \dots, z^{k_{0}}_{l_{k_{0}}}, y^{k_{0}+1}_{1}, \dots, y^{k_{0}+1}_{n_{k_{0}+1}}, \dots \right\} $,
	yields a sequence \newline $ \left\{z_{0}, z_{1}, \dots \right\} \subseteq Y $ and $ w = t_{1}t_{2} \dots \in W_{\infty} $ such
	that for all $ i \in \mathbb{N} $ $ d(f_{t_{i+1}}(z_{i}), z_{i+1}) < \delta_{n} $ and for $ n > \frac{1}{\delta_{n}} $ there is $ k_{n} $ such that for
	$ m > k_{n} $, $ d(f_{t_{m}+1}(z_{m}), z_{m+1}) < \frac{1}{n} $.

	To construct the pseudo orbit, set $ \mathcal{O}(t_{1}\dots t_{m}) = z_{m} $ and $ \mathcal{O}(e) = z_{0} $. For notation, let $ t_{0} = 1 $. For all other $ v \in F $
	let $ n_{v} $ be the largest integer such that $ t_{0} \dots t_{n_{v}} $ is a prefix of $ v = t_{0} \dots t_{n_{v}}v'$.
	Then let $ \mathcal{O}(v) = f_{v'}(z_{m}) $.

	Let $ x $ be a point that shadows this pseudo-orbit. We want to show that
	$ d_{H}(\omega_{w}(x), Y) < \frac{1}{n} $. For any element of $ y \in \omega_{w}(x) $, $ d(y, Y) < \frac{1}{n} $ by the nature of the construction.
	If $ a \in Y $, then there is a prefix $ v_{m}$ of $ w_{n} $ with $ d(a, \mathcal(O)(v_{m}) ) < \frac{1}{m} $.
	By definition, $ \left\{ f_{v_{m}}(x) \right\} $ converges to a point $ z \in \omega_{w}(x) $.
	Thus $ d(a, z) < \frac{1}{n} $. Therefore $ d_{H}(\omega_{w}(x_{n}), Y) < \frac{1}{n} $. As $ n $ was arbitrary, $ Y \in \overline{\mathfrak{W}_w}$
	and $ CICT \subseteq \overline{\mathfrak{W}_w} $.
	We have already shown that $ \mathfrak{W}_w \subseteq CICT $ and $ CICT $ is closed; thus $ \overline{\mathfrak{W}_w} \subseteq CICT $.
\end{proof}

\begin{figure}
	\centering

	\begin{tikzpicture}[scale = .2]
	\draw (-8,6) -- (-8,4) -- (-8,0) -- (-8,-4) -- (-8,-6);
	\draw (-14,0) -- (-12,0) -- (-8,0) -- (-4,0) -- (-4,2);
	\draw (-10,-4) -- (-6,-4);
	\draw (-12,-2) -- (-12,2);
	\draw (-10,4) -- (-6,4);
	\draw[ultra thick] (-4,-2) -- (-4,0) -- (-2,0);
	\node at (-4,-2) [fill = white]{$y$};
	\node at (-2,0) [fill = white]{$y$};

	\draw (8,6) -- (8,4) -- (8,0) -- (8,-4) -- (8,-6);
	\draw (14,0) -- (12,0) -- (8,0) -- (4,0) -- (4,2);
	\draw (10,-4) -- (6,-4);
	\draw (12,-2) -- (12,2);
	\draw (10,4) -- (6,4);
	\draw (4,-2) -- (4,0) -- (2,0);
	\draw[ultra thick] (4,-2) -- (4,0) -- (8,0) -- (8,4) -- (10,4);
	\node at (4,-2) [fill = white]{$y$};
	\node at (10,4) [fill = white]{$y$};
	\draw[thick, dashed] (-1,0) -- (3,-2);

	\draw[ultra thick] (-2,-25) -- (-2,-16) -- (4,-16) -- (4,-12) -- (7,-12) --
	(7,-10) -- (8, -10);
	\draw (-2,-16) -- (-8,-16) -- (-12,-16) -- (-15,-16);
	\draw (-12, -19) -- (-12, -13);
	\draw (-8,-16) -- (-8,-12) -- (-8,-9);
	\draw (-11,-12) -- (-5,-12);
	\draw (-8,-16) -- (-8,-20) -- (-8,-23);
	\draw (-11,-20) -- (-5,-20);
	\draw (-2,-16) -- (-2,-8);
	\draw (1,-12) -- (4,-12) -- (4,-9);
	\draw (10,-12) -- (9,-12) -- (7,-12) -- (7,-14) -- (7,-15);
	\draw (6,-10) -- (7,-10) -- (7,-9);
	\draw (6,-14) -- (8,-14);
	\draw (9,-13) -- (9,-11);
	\node [below] at (-2,-25) {$ y $};
	\node [below right] at (4,-16) {$ y $};
	\node [above right] at (8,-10) {$ y $};

	\end{tikzpicture}
	\caption{One step of the construction.} \label{fig:M2}

\end{figure}
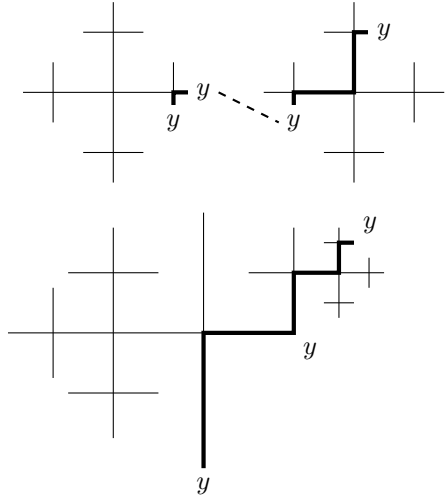

 With a similar construction, we obtain an analogous result for $ IBT $ sets.

\begin{thm}\label{IBTgeneralgroup}
	For an $F$-action on $X$ with $F$-shadowing, $IBT^*=\overline{\mathfrak{W}_{F_w}}$.
\end{thm}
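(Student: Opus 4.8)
The plan is to prove the two inclusions separately, with the forward one essentially free and the reverse one a shadowing-adapted version of the SFT construction in Theorem~\ref{IBTgroup}.

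The inclusion $\overline{\mathfrak{W}_{F_w}}\subseteq IBT^*$ is immediate from earlier results: by the lemma preceding Theorem~\ref{IBTgroup}, every $\omega_{F_w}(x)$ lies in $IBT^*$ (indeed with some $y\in\omega_w(x)$ being $i,j$-final), and $IBT^*$ is closed by Theorem~\ref{IBT*closed}, so the Hausdorff closure of $\mathfrak{W}_{F_w}$ stays inside $IBT^*$. For the reverse inclusion, fix $Y\in IBT^*$ with a distinguished $i,j$-final point $y\in Y$ (recall that $IBT$ already forces $Y$ invariant, and $Y$ closed in compact $X$ is compact). For each $n\in\mathbb N$ I would run the construction of Theorem~\ref{IBTgroup} verbatim except for its last, SFT-specific step: using $F$-shadowing, pick $\delta_n>0$ so that every $\delta_n$-$F$-pseudo-orbit is $\frac1n$-shadowed, and fix $k_0$ with $2^{-k_0}<\delta_n$; for each $k\ge k_0$ take a $2^{-k}$-cover $\{x^k_m\}$ of $Y$ and, since $y$ is $i,j$-final, a $2^{-k}$-$F$-pseudo-orbit $\mathcal O_k$ of $Y$ containing each $x^k_m$ at an index $u^k_m$ and containing $y$ at indices $u^k_i,u^k_j$ ending in $i$ and $j$ respectively, with $u^k_i,u^k_j$ not prefixes of each other or of any $u^k_m$. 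Gluing as there, $w_{k_0}=u^{k_0}_j$ and $w_{k+1}=w_k\,(u^{k+1}_i)^{-1}u^{k+1}_j$, yields a reduced word $w^{(n)}=\lim_k w_k\in W_\infty$ and a function $\mathcal O_n\colon F\to Y$ that is a $\delta_n$-$F$-pseudo-orbit (the splice-point well-definedness is checked exactly as in Theorem~\ref{IBTgroup}). Let $\bar x_n\in X$ be a point that $\frac1n$-shadows $\mathcal O_n$.

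It then remains to show $d_H(\omega_{F_{w^{(n)}}}(\bar x_n),Y)\to 0$, which gives $Y=\lim_n\omega_{F_{w^{(n)}}}(\bar x_n)\in\overline{\mathfrak{W}_{F_w}}$. Since $\mathcal O_n(u)\in Y$ for every $u$, we get $d(f_u(\bar x_n),Y)<\frac1n$ for all $u\in F$, whence $\omega_{F_{w^{(n)}}}(\bar x_n)\subseteq\{z:d(z,Y)\le\frac1n\}$. Conversely, given $z\in Y$, for each $k>k_0$ there is $x^k_m$ with $d(z,x^k_m)<2^{-k}$, and $x^k_m=\mathcal O_n(\tilde u_k)$ for an index $\tilde u_k$ lying deep inside the stage-$k$ block, so $\tilde u_k$ agrees with $w^{(n)}$ on a prefix whose length tends to infinity; thus $d(f_{\tilde u_k}(\bar x_n),z)<\frac1n+2^{-k}$, and by compactness these points accumulate at a point of $\omega_{F_{w^{(n)}}}(\bar x_n)$ within $\frac1n$ of $z$. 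Combining the two estimates gives $d_H(\omega_{F_{w^{(n)}}}(\bar x_n),Y)\le\frac2n$.

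The main obstacle is the bookkeeping in the middle step: one must check that the gluing from Theorem~\ref{IBTgroup} still produces a genuine $\delta_n$-$F$-pseudo-orbit (the subtle agreement of the two ``copies'' of $y$ at each splice, and reducedness of $w^{(n)}$ since $i\ne j$), and then track the three error sources—the $\frac1n$ shadowing error, the drift between the abstract pseudo-orbit $\mathcal O_n$ and the true orbit of $\bar x_n$, and the mesh $2^{-k}$ of the covers—so that both halves of the Hausdorff bound genuinely vanish. As the remark after Theorem~\ref{IBTgroup} notes, that construction ``depended only upon the properties of $A\in IBT^*$,'' so the real content here is merely substituting the $F$-shadowing step for the SFT step, which converts the exact equality $Y=\omega_{F_w}(\bar x)$ of Theorem~\ref{IBTgroup} into the limiting statement $Y\in\overline{\mathfrak{W}_{F_w}}$.
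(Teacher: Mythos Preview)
Your proposal is correct and follows essentially the same approach as the paper: one inclusion comes for free from the earlier lemma $\mathfrak{W}_{F_w}\subseteq IBT^*$ together with Theorem~\ref{IBT*closed}, and for the other you rerun the gluing construction of Theorem~\ref{IBTgroup} at each scale $n$, replace the SFT-specific step by $F$-shadowing to obtain a shadow point $\bar x_n$, and bound $d_H(\omega_{F_{w^{(n)}}}(\bar x_n),Y)$ by a quantity tending to zero. Your write-up is in fact somewhat more explicit than the paper's (which defers the Hausdorff estimate to ``an analogous argument to Theorem~\ref{CICTgeneral}''), but the strategy is identical.
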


\begin{proof}
	Let $ Y \in IBT^{*} $. For $ k \in \mathbb{N} $, find $ \delta_{k} $ such that any $ \delta_{k} $-pseudo orbit can $ \frac{1}{k} $ shadowed.	Define $ \mathcal{O} $ as follows.
	Fix $ k_{0} > \frac{1}{\delta_{k}} $. For $ k \geq k_{0} $ choose $ \left\{x_{1}^{n}, \dots, x_{k_{n}}^{n} \right\} \subseteq Y $ a $ \frac{1}{n} $ cover of $ Y $. By assumption, we can find a $ \frac{1}{n}$-pseudo orbit $ \mathcal{O}_{n} $
	with indexes $ u_{i}^{n}, u_{j}^{n} \in \Sigma^{k_{n}} $ ending in $ i $ and $ j $ respectively such that $ \mathcal{O}_{n}(u_{i}^{n}) =  \mathcal{O}_{n}(u_{j}^{n}) = y $
	and indexes $ u_{m}^{n} $ such that $ \mathcal{O}_{n}(u_{m}^{n}) = x_{m}^{n} $ and $ u_{i}^{n}, u_{j}^{n} $ are not prefixes of $ u_{m}^{n} $ for all $ m $.

	We will inductively construct $ \mathcal{O}: F \rightarrow Y $ and a word $ w \in W_{\infty} $.

	First we construct $ w $. Define $ w_{0} = u_{j}^{k_{0}} $ and for $ n > 0 $, $ w_{n} = w_{n-1}(u_{i}^{k_{0}+n})^{-1}u_{j}^{k_{0}+n} $. By induction we will prove $ w_{n} $ ends
	with $ j $ and begins with $ w_{n-1} $. Note $ u_{j}^{k_{0}} $ ends with $ j $ so $ w_{0} $ ends with $ j $. Suppose for our inductive step that $ w_{n-1} $ ends with $ j $.
	As $ u_{i}^{n} $ ends with $ i $ and is not a prefix of $ u_{j}^{n} $ and $ u_{j}^{n} $ ends with $ j $ we have $ (u_{i}^{n})^{-1}u_{j}^{n} $ beginning with $ i^{-1} $ and ending with $ j $.
	Therefore $ w_{n} $ ends with $ j $. Finally, because $ i \neq j $, $ w_{n} $ begins with $ w_{n-1} $. Define $ w = \lim_{n \rightarrow \infty} w_{n} $.

	For ease in constructing $ \mathcal{O} $, define $ \mathcal{O}'_{n} $ to be $ \mathcal{O}_{n} $ restricted to elements of $ \Sigma^{k_{n}} $ which do not have
	$ u_{i}^{n}, u_{j}^{n} $ as a proper prefix. Let $ \mathcal{D}_{n} $ be the domain of $ \mathcal{O}'_{n} $.
	For $ u \in \mathcal{D}_{k_{0}} $ define $ \mathcal{O}(u) = \mathcal{O}'_{M}(u) $.
	Then for $ u \in \mathcal{D}_{k-{0}+n} $ define $ \mathcal{O}^{k_{0}}(w_{n-1}(u_{i}^{k_{0}+n})^{-1}u) = \mathcal{O}_{k_{0}+n}'(u) $.
	We will show that this step is well-defined.
	Suppose for some $ n < m $ there is  $ v, v' $ in $ \mathcal{D}_{n}, \mathcal{D}_{m} $ respectively such that $ w_{n-1}(u_{i}^{k_{0}+n})^{-1}v = w_{m-1}(u_{i}^{k_{0}+m})^{-1}v' $.
	Write $  w_{m-1} = w_{n-1}(u_{i}^{k_{0}+n})^{-1}u_{j}^{k_{0}+n} \cdots (u_{i}^{k_{0}+m-1})^{-1}u_{j}^{k_{0}+m-1} $. \newline
	Thus $ (u_{i}^{k_{0}+n})^{-1}v = (u_{i}^{k_{0}+n})^{-1}u_{j}^{k_{0}+n} \cdots (u_{i}^{k_{0}+m-1})^{-1}u_{j}^{k_{0}+m-1}(u_{i}^{k_{0}+m})^{-1}v'$. \newline
	Therefore $ v = u_{j}^{k_{0}+n} \cdots (u_{i}^{k_{0}+m-1})^{-1}u_{j}^{k_{0}+m-1}(u_{i}^{k_{0}+m})^{-1}v' $.
	As $ v $ does not contain $ u_{j}^{k_{0}+n} $ as a proper prefix, $ v =  u_{j}^{k_{0}+n} $, $ m = n + 1 $ and $ v' = u_{i}^{k_{0}+n+1} $.
	In this case, $ \mathcal{O}(w_{n-1}(u_{i}^{k_{0}+n})^{-1}u_{j}^{k_{0}+n}) =  \mathcal{O}_{k_{0}+n}'(v) = y $ and
	$  \mathcal{O}(w_{n}(u_{i}^{k_{0}+n+1})^{-1}u_{i}^{k_{0}+n+1}) = \mathcal{O}_{k_{0}+n+1}'(u_{i}^{k_{0}+n+1}) = y $. Thus $ \mathcal{O} $, so far as it has been defined is well-defined.

	To complete the construction of $ \mathcal{O} $, for $ u \in F $ with $ \mathcal{O}(u) $ not already defined, let $ u' $ be the largest prefix of $ u $ with $ \mathcal{O}(u') $ already
	defined. Such $ u' $ always exists as $ \mathcal{O}(e) $ is already defined.
	Then define $ \mathcal{O}(u) = \sigma_{u'^{-1}u}(\mathcal{O}(u')) $.
	Thus we have defined $ \mathcal{O}: F \rightarrow Y $.

	From the construction, it is easy to see that $ \mathcal{O} $ is a $ \delta_{k} $-pseudo orbit.
	Let $ x \in X $ $ \frac{1}{k} $ shadow $ \mathcal{O} $. By an analogous argument to \ref{CICTgeneral},
	$ d_{H}(\omega_{F_{w}}(x), Y) < \frac{1}{k} $. Thus $ Y \in IBT^{*}$.
\end{proof}

\begin{thm}\label{IBTgeneralmonoid}
	For an $H$-action on $X$ with $H$-shadowing, $IBT^{\circ}=\overline{\mathfrak{W}_{F_w}}$.
\end{thm}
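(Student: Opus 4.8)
The statement is the free-monoid counterpart of Theorem~\ref{IBTgeneralgroup}, and the plan is to prove the two inclusions separately, the first being immediate and the second reusing the gluing scheme of Theorem~\ref{IBTmonoid} together with the shadowing estimate of Theorem~\ref{CICTgeneral}.

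For $\overline{\mathfrak{W}_{F_w}}\subseteq IBT^{\circ}$, recall that we have already shown that for every $w\in W_{\infty}$ and $x\in X$ the set $\omega_{F_w}(x)$ lies in $IBT^{\circ}$ (with a point of $\omega_w(x)$ final), so $\mathfrak{W}_{F_w}\subseteq IBT^{\circ}$; since $IBT^{\circ}$ is closed, passing to closures gives the inclusion. No shadowing is needed here.

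For $IBT^{\circ}\subseteq\overline{\mathfrak{W}_{F_w}}$, fix $Y\in IBT^{\circ}$ with a final point $y\in Y$ and fix $k\in\mathbb{N}$. Using $H$-shadowing, choose $\delta_k>0$ so that every $\delta_k$-$H$-pseudo orbit is $\frac{1}{k}$-shadowed by a point of $X$, and pick $k_0>\frac{1}{\delta_k}$. For each $n\ge k_0$ take a finite $\frac{1}{n}$-cover $\{x_1^n,\dots,x_{\ell_n}^n\}$ of $Y$ and, because $y$ is final, a $\frac{1}{n}$-$H$-pseudo orbit $\mathcal{O}_n$ of $Y$ with indices $u_y^n,u_m^n$ such that $\mathcal{O}_n(e)=\mathcal{O}_n(u_y^n)=y$, $\mathcal{O}_n(u_m^n)=x_m^n$, and $u_y^n$ is not a prefix of any $u_m^n$. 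Now stitch exactly as in Theorem~\ref{IBTmonoid}: set $w=u_y^{k_0}u_y^{k_0+1}\cdots\in W_{\infty}$, put $u_y^{k_0-1}=e$ for convenience, and for $u\in H$ let $m$ be maximal with $u=u_y^{k_0-1}u_y^{k_0}\cdots u_y^{k_0+m}u'$, defining $\mathcal{O}(u)=\mathcal{O}_{k_0+m+1}(u')$. The prefix-freeness of $u_y^n$ is precisely what makes this assignment well defined (as in Theorem~\ref{IBTmonoid}), each stitch comes from a $\mathcal{O}_n$ with $n\ge k_0>\frac{1}{\delta_k}$, so $\mathcal{O}$ is a $\delta_k$-$H$-pseudo orbit of $Y$, and along the prefixes of $w$ the pseudo-orbit defect drops below $\frac{1}{n}$ once the index is long enough, since deeper stitches are governed by finer $\mathcal{O}_n$.

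Let $x_k\in X$ be a point $\frac{1}{k}$-shadowing $\mathcal{O}$; I claim $d_H(\omega_{F_w}(x_k),Y)\le\frac{1}{k}$, which by the arbitrariness of $k$ yields $Y=\lim_k\omega_{F_w}(x_k)$ and hence $Y\in\overline{\mathfrak{W}_{F_w}}$. One direction: any $z\in\omega_{F_w}(x_k)$ is a limit of points $f_u(x_k)$ with $u$ agreeing with $w$ on longer and longer prefixes, and each such $f_u(x_k)$ is within $\frac{1}{k}$ of $\mathcal{O}(u)\in Y$, so $d(z,Y)\le\frac{1}{k}$. The other direction is the delicate one: given $a\in Y$ and large $n$, choose a cover point $x_m^{k_0+n+1}$ with $d(a,x_m^{k_0+n+1})<\frac{1}{k_0+n+1}$; by construction it is indexed in $\mathcal{O}$ by $u=u_y^{k_0}\cdots u_y^{k_0+n}u_m^{k_0+n+1}$, whose initial segment $u_y^{k_0}\cdots u_y^{k_0+n}$ is a prefix of $w$ of length at least $n+1$, so that $f_u(x_k)$ is within $\frac{1}{k}$ of $x_m^{k_0+n+1}$ while $u$ shares an arbitrarily long prefix with $w$; a convergent subsequence of these $f_u(x_k)$ then produces a point of $\omega_{F_w}(x_k)$ within $\frac{1}{k}$ of $a$. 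The Hausdorff estimate is carried out just as in Theorems~\ref{CICTgeneral} and~\ref{IBTgeneralgroup}. The main obstacle is bookkeeping rather than ideas: one must check that the stitched $\mathcal{O}$ is genuinely well defined and, crucially, that the cover points used to approximate points of $Y$ are inserted \emph{after} the gluing block $u_y^{k_0+n}$ so that their indices share long prefixes with $w$ — this is exactly the role played by the hypothesis that $u_y^n$ is not a prefix of any $u_m^n$, and it is the monoid counterpart of the reduced-word argument required in Theorem~\ref{IBTgeneralgroup}.
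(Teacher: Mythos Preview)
Your proposal is correct and follows essentially the same approach as the paper: the easy inclusion via closedness of $IBT^{\circ}$, and the hard inclusion by stitching the finite $\frac{1}{n}$-pseudo orbits $\mathcal{O}_n$ along the word $w=u_y^{k_0}u_y^{k_0+1}\cdots$ exactly as in Theorem~\ref{IBTmonoid}, then shadowing and running the Hausdorff-distance estimate of Theorem~\ref{CICTgeneral}. You have in fact supplied more detail than the paper does (it dispatches both the well-definedness check and the Hausdorff estimate by reference to earlier arguments), and your closing remark correctly pinpoints the role of the non-prefix condition on $u_y^n$; one small quibble is that the well-definedness of $\mathcal{O}$ comes simply from choosing $m$ maximal, while the non-prefix hypothesis is what guarantees the cover points sit at indices sharing long prefixes with $w$.
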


\begin{proof}
	Let $ Y \in IBT^{\circ} $. For $ k \in \mathbb{N} $, find $ \delta_{k} $ such that any $ \delta_{k} $-pseudo orbit can $ \frac{1}{k} $ shadowed.	Define $ \mathcal{O} $ as follows.

	Fix $ k_{0} > \frac{1}{\delta_{k}} $. For $ n \geq k_{0} $ choose $ \left\{x_{1}^{n}, \dots, x_{k_{n}}^{n} \right\} \subseteq Y $ a $ \frac{1}{n} $ cover of $ Y $. By assumption,	we can find a $ \frac{1}{n} $-pseudo orbit $ \mathcal{O}_{n} $
	with indexes $ u_{y}^{n}, u_{m}^{n} $ such that $ \mathcal{O}_{n}(e) = \mathcal{O}_{n}(u^{n}_{y}) = y$, $ \mathcal{O}_{n}(u_{m}^{n}) = x_{m}^{n} $
	and $ u_{y}^{n} $ is not a prefix of 	$ u_{m}^{n} $ for all $ m $.

	Define $ w = u_{y}^{k_{0}}u_{y}^{k_{0}} \dots $. For notation, define $ u_{y}^{k_{0}-1} = e $.
	For $ u \in H $, find the maximal $ m $ such that $ u = u_{y}^{k_{0}-1} \dots u_{y}^{m}u' $ and define $ \mathcal{O}(u) = \mathcal{O}_{m+1}(u') $.
	Clearly, $ \mathcal{O} $ is a $ \delta_{k} $-H-pseudo orbit.

	By the same argument as above, choosing $ x $ to $ \frac{1}{k} $-shadow $ \mathcal{O} $ implies $ d_{H}(\omega_{F_{w}}(x), Y) < \frac{1}{k} $. Thus $ Y \in IBT^{\circ}$.
\end{proof}

\begin{definition}
	An \emph{asymptotic F-pseudo orbit} is a function $ \mathcal{O}: F \rightarrow X $ such that for every $ \delta > 0 $ there is an integer
	$ n $ such that for $ \left| w \right| > n $ and $ u\in S $, $ d(f_{u}(\mathcal{O}(w)), \mathcal{O}(wu)) < \delta $.
\end{definition}

\begin{definition}
	A function $ \mathcal{O}: F \rightarrow X $ is \emph{asymptotically shadowed} if there is an $ x \in X $ such that for every $ \epsilon > 0 $ there
	is an integer $ n $ such that for $ \left| w \right| > n  $ $ d(f_{w}(x), \mathcal{O}(w)) < \epsilon $.
\end{definition}

It is not the case that in a shift of finite type that every asymptotic pseudo orbit can be asymptotically shadowed.
Consider for instance the shift of finite type of $ \left\{0,1\right\}^{F} $ given by forbidding any 0 adjacent to a 1.
This shift of finite type has two elements: $ x_{0} $ and $ x_{1} $, the constant maps of 0 and 1 respectively.
We can construct an asymptotic pseudo orbit by first choosing $ j \in S $, then defining
$ \mathcal{O}(ju) = x_{0} $ for $ u \in F $, $ \mathcal{O}(iu) = x_{1} $ for $ i \neq j \in S $  and $ u \in F $, and $ \mathcal{O}(1) = x_{1} $.
Suppose that some $ y $ in the subshift asymptotically shadows $ \mathcal{O} $.
Then $ y $ must contain both 0 and 1, meaning it contains a 0 adjacent to a 1,
so $ y $ is not in the subshift. This contradicts $ \mathcal{O} $ being asymptotically shadowed.

\begin{thm}
	If $ X $ is an m-step SFT, every asymptotic, $ 2^{-m-1} $ pseudo-orbit can be asymptotically shadowed.
\end{thm}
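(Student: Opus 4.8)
The plan is to recycle the construction used to prove that every SFT has the shadowing property, and then to invoke the asymptotic hypothesis to improve the uniform $2^{-m}$-tracking produced there into tracking that sharpens along longer and longer words.

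So fix an asymptotic $2^{-m-1}$-pseudo orbit $\mathcal{O}\colon F\to X$ and, exactly as in the shadowing proof, define $x\in\mathcal{A}^{F}$ by $x(u)=\mathcal{O}(u)(e)$. Because $\mathcal{O}$ is in particular a genuine $2^{-m-1}$-pseudo orbit, the estimate of Lemma~\ref{shiftPseudoOrbit} (in the form valid for $2^{-k}$-pseudo orbits, with $k=m+1$) gives $\mathcal{O}(u)(v)=\mathcal{O}(uv)(e)$ for all $u\in F$ and all $v\in\Sigma^{m}$, so that $\sigma_{u}(x)|_{\Sigma^{m}}=\mathcal{O}(u)|_{\Sigma^{m}}$. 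Hence the central $m$-block of $\sigma_{u}(x)$ equals that of $\mathcal{O}(u)\in X$ and is therefore not forbidden; since $u$ is arbitrary and $X$ is $m$-step, $x\in X$. This part is routine; the real content is to show that this same $x$ asymptotically shadows $\mathcal{O}$.

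For that, fix $\epsilon>0$ and choose $p$ with $2^{-p}<\epsilon$. Applying the definition of an asymptotic $F$-pseudo orbit with $\delta=2^{-p-1}$ produces $n_{0}$ such that $d(\sigma_{i}(\mathcal{O}(w')),\mathcal{O}(w'i))<2^{-p-1}$ whenever $|w'|>n_{0}$ and $i\in S$. Now take any $w\in F$ with $|w|>n_{0}+p$ and any $v=v_{0}\dots v_{l}\in\Sigma^{p}$. Every prefix $wv_{0}\dots v_{i}$ of $wv$ has length at least $|w|-p>n_{0}$, so the refined estimate is available at every step of the telescoping induction in the proof of Lemma~\ref{shiftPseudoOrbit}; carrying that induction out with per-step bound $2^{-p-1}$ yields $d(\sigma_{v_{0}\dots v_{i}}(\mathcal{O}(w)),\mathcal{O}(wv_{0}\dots v_{i}))<2^{-p-1+i}$ for $i\le l$, and in particular $d(\sigma_{v}(\mathcal{O}(w)),\mathcal{O}(wv))<2^{-p-1+l}<1$. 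Therefore $\mathcal{O}(w)(v)=\sigma_{v}(\mathcal{O}(w))(e)=\mathcal{O}(wv)(e)=x(wv)=\sigma_{w}(x)(v)$. Since $v\in\Sigma^{p}$ was arbitrary, $\sigma_{w}(x)|_{\Sigma^{p}}=\mathcal{O}(w)|_{\Sigma^{p}}$, whence $d(\sigma_{w}(x),\mathcal{O}(w))\le 2^{-p}<\epsilon$. Taking $n=n_{0}+p$ completes the argument.

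The one point needing care is the cancellation inherent in the free group: the telescoping argument works only if all the intermediate words $wv_{0}\dots v_{i}$, not merely $wv$ itself, are long enough to fall into the asymptotic regime, and this is precisely why $|w|$ must be required to exceed $n_{0}+p$ rather than only $n_{0}$. Beyond that, every ingredient is a reuse of the two block-reconstruction lemmas already proved for ordinary $2^{-k}$-pseudo orbits, now applied only to the tail of $\mathcal{O}$; the free-monoid case is identical and marginally easier, since there $|wv|=|w|+|v|$.
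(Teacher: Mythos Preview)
Your proof is correct and follows essentially the same route as the paper: define $x(u)=\mathcal{O}(u)(e)$, use the baseline $2^{-m-1}$-pseudo orbit hypothesis together with Lemma~\ref{shiftPseudoOrbit} to get $x\in X$, and then, for each precision level $p$, use the asymptotic hypothesis to find a threshold beyond which $\mathcal{O}$ behaves like a $2^{-p-1}$-pseudo orbit and rerun the telescoping argument to match $\sigma_w(x)$ and $\mathcal{O}(w)$ on $\Sigma^p$. Your explicit remark that $|w|$ must exceed $n_0+p$ (rather than just $n_0$) to absorb possible cancellation in the intermediate words is exactly the reason the paper requires $|u|>l_{k+1}+k$, though the paper states this more tersely.
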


\begin{proof}
	Let $ \mathcal{O} $ be such a pseudo orbit.
	Construct $ x $ by $ x(u) = \mathcal{O}(u)(1) $.
	By the previous theorem, $ x \in X $.

	For $ k > m + 1 $ find $ l_{k} $ such that for $ \left| u \right| > l_{k} $, $ d(\sigma_{i}(\mathcal{O}(u)), \mathcal{O}(ui)) < 2^{-k} $ for all $ i \in S $.
	We claim that for $ \left| u \right| > l_{k+1} + k $, $ d(\sigma_{u}(x), \mathcal{O}(u)) < 2^{-k} $.
	Notice by the choice of $ u $ that $ \mathcal{O}_{|u \Sigma^{k}} $ is a finite portion of a $ 2^{-k-1} $ pseudo orbit.
	Hence by a previous lemma, $ \mathcal{O}(u)(v) = \mathcal{O}(uv)(e) $ for $ v \in \Sigma^{k} $.
	Thus $ \sigma_{u}(x)(v) = x(uv) = \mathcal{O}(uv)(1) = \mathcal{O}(u)(v) $. Hence $ d(\sigma_{u}(x), \mathcal{O}(u)) < 2^{-k} $, so our claim
	is correct.
	Therefore $ x $ asymptotically shadows $ \mathcal{O} $.
\end{proof}

As it happens, this form of shadowing is sufficient for our purposes.

\begin{definition}
	A $G$-action on a compact metric spaces has the \emph{weak} $G$-asymptotic shadowing property if there exists $\delta>0$ such that every asymptotic $G$-pseudo-orbit which is also a $\delta$-$G$-pseudo-orbit  is asymptotically shadowed.
\end{definition}

\begin{thm}
	For an $G$-action on $X$ with weak $G$-asymptotic shadowing, $ \mathfrak{W}_{w}(X) = CICT $.
\end{thm}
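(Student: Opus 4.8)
The plan is to prove the two inclusions $\mathfrak{W}_w(X)\subseteq CICT$ and $CICT\subseteq \mathfrak{W}_w(X)$ separately. The first is already in hand: the Lemma establishing $\mathfrak{W}_w\subseteq CICT$ makes no use of any shadowing hypothesis, so it applies here verbatim. For the reverse inclusion, fix $Y\in CICT$ and let $\delta>0$ be the constant furnished by the weak $G$-asymptotic shadowing property. I would then reproduce, essentially verbatim, the combinatorial construction from the proof of Theorem \ref{CICTgeneral}: choose $k_0>\tfrac1\delta$ large enough to absorb the moduli of uniform continuity of the finitely many generators, and for each $k\geq k_0$ take a $\tfrac1k$-cover of $Y$ by points of $Y$, link successive cover points by $CICT$-chains whose index words begin and end with the prescribed symbols $i(\cdot)$, $t(\cdot)$, splice these chains across scales into a single one-sided chain $\{z_0,z_1,\dots\}\subseteq Y$ indexed by an infinite reduced word $w=t_1t_2\cdots\in W_\infty$, and extend to a function $\mathcal{O}\colon G\to X$ by $\mathcal{O}(t_1\cdots t_m v')=f_{v'}(z_m)$, where $t_1\cdots t_m$ is the longest prefix of $w$ that is a prefix of the reduced word in question.

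The first substantive point is that this $\mathcal{O}$ is simultaneously a $\delta$-$G$-pseudo-orbit \emph{and} an asymptotic $G$-pseudo-orbit. Every transition error is below $\delta$ for the same reason as in Theorem \ref{CICTgeneral}: the coarsest cover used has scale $\tfrac1{k_0}<\delta$ (and the uniform-continuity amplification occurring in the ``cancellation'' steps below is absorbed into the choice of $k_0$). For the asymptotic condition, I would examine where nonzero errors can occur. If the reduced word $u$ has a nonempty off-word tail $v'$, then for any $s\in S$ the identity $\mathcal{O}(us)=f_{v's}(z_m)=f_s(\mathcal{O}(u))$ holds (checking separately the subcase in which $v's$ cancels), so the error is exactly $0$. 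Nonzero errors therefore occur only when $u=t_1\cdots t_m$ lies along $w$, or reduces to such a word after appending $s$; in the first case the error is $d(f_{t_{m+1}}(z_m),z_{m+1})$, and in the second (only in the free group case, with $s=t_m^{-1}$) it is $d(f_{t_m^{-1}}(z_m),z_{m-1})$, which by $f_{t_m^{-1}}\circ f_{t_m}=\mathrm{id}$ and uniform continuity is controlled by $d(f_{t_m}(z_{m-1}),z_m)$. In either case the error tends to $0$ as $m\to\infty$ because the relevant cover scales shrink, and since a long reduced word with a nonzero error necessarily has a long on-word prefix, this is exactly the defining condition of an asymptotic pseudo-orbit.

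With this arranged, the weak $G$-asymptotic shadowing property supplies a point $x\in X$ asymptotically shadowing $\mathcal{O}$: for each $\epsilon>0$ there is $n_\epsilon$ with $d(f_u(x),\mathcal{O}(u))<\epsilon$ whenever $|u|>n_\epsilon$. I would then check $\omega_w(x)=Y$. For $\omega_w(x)\subseteq Y$: any $y\in\omega_w(x)$ is a limit of points $f_{t_1\cdots t_{k_j}}(x)$ with $k_j\to\infty$; asymptotic shadowing forces $d(f_{t_1\cdots t_{k_j}}(x),z_{k_j})\to 0$, hence $z_{k_j}\to y$, and since each $z_{k_j}\in Y$ with $Y$ closed, $y\in Y$. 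For $Y\subseteq\omega_w(x)$: given $a\in Y$ and $m\in\mathbb{N}$, pick $k$ so large that $\tfrac1k<\tfrac1{2m}$ and so that the prefix $v$ of $w$ with $\mathcal{O}(v)$ equal to the cover point that is $\tfrac1k$-close to $a$ has $|v|>\max\{m,n_{1/(2m)}\}$ — possible since such prefixes occur arbitrarily far along $w$ as $k$ grows — whence $d(f_v(x),a)\leq d(f_v(x),\mathcal{O}(v))+d(\mathcal{O}(v),a)<\tfrac1m$; thus $a\in\omega_w(x)$. Therefore $Y=\omega_w(x)\in\mathfrak{W}_w(X)$, giving $CICT\subseteq\mathfrak{W}_w(X)$ and completing the proof. (In the free monoid case $CICT=ICT$ and the cancellation subcases disappear, so the argument only simplifies.)

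I expect the main obstacle to be the bookkeeping in the second paragraph: verifying that the construction inherited from Theorem \ref{CICTgeneral} genuinely yields an \emph{asymptotic} pseudo-orbit rather than merely a $\delta$-pseudo-orbit. This requires tracking precisely which elements of $G$ carry nonzero transition error — keeping careful account of reductions at the junction between the on-word prefix and the off-word tail — and confirming that all such elements have long on-word prefixes so that the errors are small. Once that is established, the limit-set computation using asymptotic shadowing in the final paragraph is routine, paralleling the corresponding $\Z$-action argument of \cite{GoodMeddaugh-ICT}.
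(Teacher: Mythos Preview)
Your proposal is correct and follows essentially the same approach as the paper: reuse the construction of $\mathcal{O}$ and $w$ from Theorem~\ref{CICTgeneral}, observe that this $\mathcal{O}$ is both a $\delta$-$G$-pseudo-orbit and an asymptotic $G$-pseudo-orbit, apply weak $G$-asymptotic shadowing to obtain $x$, and verify $\omega_w(x)=Y$. The paper's own proof is extremely terse---it simply asserts ``by construction, $\mathcal{O}$ is an asymptotic $\delta$-$F$-pseudo orbit'' and ``it is not difficult to see that $A=\omega_w(x)$''---whereas you supply the bookkeeping the paper omits, in particular the case analysis showing that nonzero transition errors occur only at (or, in the group case, adjacent via cancellation to) prefixes of $w$, and the two-inclusion verification of $\omega_w(x)=Y$ using asymptotic shadowing rather than uniform $\tfrac{1}{n}$-shadowing.
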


\begin{proof}
	It remains to show that $ CICT \subseteq \mathfrak{W}_{w}(X) $.
	Let $ A \in CICT $ and find $ \delta > 0 $ that witnesses the asymptotic shadowing property.
	Choose $ k > \frac{1}{\delta} $ and let $ \mathcal{O} $ and $ w $ be as defined in Theorem \ref{CICTgeneral}.
	By construction, $ \mathcal{O} $ is an asymptotic $ \delta $-F-pseudo orbit. If $ x \in X $ asymptotically
	shadows $ \mathcal{O} $, it is not difficult to see that $ A = \omega_{w}(x) $.
\end{proof}

If instead we use the constructions given in Theorems \ref{IBTgeneralgroup} and \ref{IBTgeneralmonoid}, we obtain the following results.

\begin{thm}
	For an $G$-action on $X$ with weak $G$-asymptotic shadowing, \\ $ \mathfrak{W}_{F_{w}}(X) = IBT^{*} $.
\end{thm}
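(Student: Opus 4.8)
The plan is to follow the template of the preceding theorem on $CICT$. One inclusion, $\mathfrak{W}_{F_w}(X)\subseteq IBT^*$, is already established by the lemma showing that every $\omega_{F_w}(x)$ lies in $IBT^*$ (with some point $y$ being $i,j$-final), so the entire task is to prove $IBT^*\subseteq\mathfrak{W}_{F_w}(X)$. Given $Y\in IBT^*$, I would fix $\delta>0$ witnessing the weak $G$-asymptotic shadowing property, choose $k>\frac1\delta$, and then recycle verbatim the construction of the function $\mathcal{O}:F\to Y$ and the infinite word $w\in W_\infty$ produced in the proof of Theorem \ref{IBTgeneralgroup}. The essential observation is that this $\mathcal{O}$ is simultaneously a $\delta$-$F$-pseudo orbit (its first block is built from a $\frac{1}{k_0}$-pseudo orbit with $k_0>\frac1\delta$, and every later block carries strictly smaller error) and an \emph{asymptotic} $F$-pseudo orbit, because the $n$th stage of the construction uses a $\frac1n$-pseudo orbit $\mathcal{O}_n$ and these stages are concatenated at words of increasing length; hence the local tracing error at a word $u$ tends to $0$ as $|u|\to\infty$.

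Having verified this, weak $G$-asymptotic shadowing yields a point $x\in X$ that asymptotically shadows $\mathcal{O}$, and it then remains to show $Y=\omega_{F_w}(x)$. Here—unlike in Theorem \ref{IBTgeneralgroup}, which only delivers $d_H(\omega_{F_w}(x),Y)<\frac1k$—asymptotic shadowing upgrades the conclusion to exact equality. For $\omega_{F_w}(x)\subseteq Y$: if $y\in\omega_{F_w}(x)$ then there are words $u_n$ with $u_n|_n=w|_n$, hence $|u_n|\geq n$, and $f_{u_n}(x)\to y$; since $x$ asymptotically shadows $\mathcal{O}$ we get $d(f_{u_n}(x),\mathcal{O}(u_n))\to 0$ with $\mathcal{O}(u_n)\in Y$, and $Y$ closed forces $y\in Y$. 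For $Y\subseteq\omega_{F_w}(x)$: given $a\in Y$ and $m\in\mathbb{N}$, pick a stage $n$ large enough that $|w_n|\geq m$ and that the $\frac{1}{n+1}$-cover of $Y$ contains a point $x_i^{n+1}$ with $d(x_i^{n+1},a)$ arbitrarily small; the corresponding index $u$ (of the form $w_n$ followed by the path into $x_i^{n+1}$) satisfies $u|_m=w|_m$, $\mathcal{O}(u)=x_i^{n+1}$, and $|u|$ as large as desired, so asymptotic shadowing makes $d(f_u(x),a)$ as small as desired; hence $a\in\omega_{F_w}(x)$.

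The main obstacle I anticipate is the bookkeeping confirming that the $\mathcal{O}$ from Theorem \ref{IBTgeneralgroup} really is an asymptotic pseudo orbit: one must check that every sufficiently long word sits inside a deep stage of the construction where the only tracing errors present come from a $\frac1n$-pseudo orbit with $n$ large, \emph{including} at the junction words where consecutive stages are glued—these junctions land on the $i,j$-final point $y$, and the error there is inherited from the fine pseudo orbit $\mathcal{O}_n$ rather than newly introduced. Once that is in hand, and once one notes that the prefix lengths $|w_n|$ grow without bound (which follows since $i\neq j$ forces $w_{n-1}$ to be a proper prefix of $w_n$), the two inclusions above go through routinely, exactly paralleling the $CICT$ argument. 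The monoid analogue would be obtained identically, substituting the construction of Theorem \ref{IBTgeneralmonoid} and the notion of a final point for an $i,j$-final point.
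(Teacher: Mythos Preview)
Your proposal is correct and follows essentially the same approach as the paper: the paper's proof consists of the single sentence ``If instead we use the constructions given in Theorems \ref{IBTgeneralgroup} and \ref{IBTgeneralmonoid}, we obtain the following results,'' and your write-up is precisely the unpacking of that sentence along the lines of the preceding $CICT$ theorem. Your added detail on verifying the two inclusions $Y=\omega_{F_w}(x)$ and on why $\mathcal{O}$ is asymptotic (including the care at the junction words) is exactly what the paper is implicitly relying on.
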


\begin{thm}
	For an $H$-action on $X$ with weak $H$-asymptotic shadowing, \\ $ \mathfrak{W}_{F_{w}}(X) = IBT^{\circ} $.
\end{thm}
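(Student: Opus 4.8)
The only inclusion that still requires work is $IBT^{\circ}\subseteq\mathfrak{W}_{F_{w}}(X)$; the reverse containment $\mathfrak{W}_{F_{w}}(X)\subseteq IBT^{\circ}$ is already recorded, being exactly the earlier lemma that $\omega_{F_{w}}(x)\in IBT^{\circ}$ for every $x\in X$ and $w\in W_{\infty}$ in a free monoid action (this needs no shadowing hypothesis). So the plan is to show that each $Y\in IBT^{\circ}$ is \emph{equal} to some $\omega_{F_{w}}(x)$, not merely a Hausdorff limit of such sets, and the argument is the monoid analogue of the proof of the preceding theorem ($\mathfrak{W}_{w}(X)=CICT$ under weak asymptotic shadowing), using the construction of Theorem~\ref{IBTgeneralmonoid} in place of that of Theorem~\ref{CICTgeneral}.

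Concretely, I would fix $Y\in IBT^{\circ}$ with final point $y$, and let $\delta>0$ witness the weak $H$-asymptotic shadowing property. Then I would run the construction of Theorem~\ref{IBTgeneralmonoid} with $k_{0}>1/\delta$: for each $n\ge k_{0}$ take a $\frac{1}{n}$-cover $\{x^{n}_{m}\}$ of $Y$ and, since $y$ is final, a $\frac{1}{n}$-$H$-pseudo orbit $\mathcal{O}_{n}$ with indices $u^{n}_{y},u^{n}_{m}$ so that $\mathcal{O}_{n}(e)=\mathcal{O}_{n}(u^{n}_{y})=y$, $\mathcal{O}_{n}(u^{n}_{m})=x^{n}_{m}$, and $u^{n}_{y}$ is not a prefix of any $u^{n}_{m}$; put $w=u^{k_{0}}_{y}u^{k_{0}+1}_{y}u^{k_{0}+2}_{y}\cdots\in W_{\infty}$ (a genuine infinite word, each $|u^{n}_{y}|\ge1$); and splice the $\mathcal{O}_{n}$ together along $w$ via $\mathcal{O}_{n}(u^{n}_{y})=y=\mathcal{O}_{n+1}(e)$. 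The one feature I would insist on — just as in Theorems~\ref{IBTgeneralgroup} and~\ref{CICTgeneral} — is that only the finitely many coordinates near the indices $u^{n}_{y},u^{n}_{m}$ be read off $\mathcal{O}_{n}$, while on every longer word $u$ one sets $\mathcal{O}(u)=f_{v}(\mathcal{O}(u'))$ for $u'$ the longest already-defined prefix of $u$ and $u=u'v$. Since $Y$ is invariant this keeps $\mathcal{O}$ valued in $Y$, and because each $\mathcal{O}_{n}$ with $n\ge k_{0}$ is a $\frac{1}{n}$-pseudo orbit while splicing and the forward-orbit extension add no error, $\mathcal{O}$ is a $\delta$-$H$-pseudo orbit.

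The step I expect to be the main obstacle is verifying that $\mathcal{O}$ is moreover \emph{asymptotic}. For this one checks $d(f_{i}(\mathcal{O}(u)),\mathcal{O}(ui))\to0$ uniformly in $i$ as $|u|\to\infty$: a long word $u$ has either followed $w$ through many blocks $u^{n}_{y}$, so its error is governed by an $\mathcal{O}_{n}$ with $n$ large and is $<\frac{1}{n}$, or else branched off $w$ early, so its long tail lies in the forward-orbit regime where the error is $0$; as each $\mathcal{O}_{n}$ is consulted only on words of bounded length, below any prescribed threshold only finitely many words can fail, which gives the uniform bound. (This is precisely why the forward-orbit extension, rather than the plain recipe $\mathcal{O}(u)=\mathcal{O}_{m+1}(u')$, is needed, and it is the same mechanism by which the pseudo orbit of Theorem~\ref{CICTgeneral} is asymptotic.) Granting this, weak $H$-asymptotic shadowing supplies $x\in X$ asymptotically shadowing $\mathcal{O}$.

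It remains to identify $Y=\omega_{F_{w}}(x)$. If $z\in\omega_{F_{w}}(x)$, write $z=\lim_{j}f_{u_{j}}(x)$ with $u_{j}|_{j}=w|_{j}$ and $|u_{j}|\to\infty$; asymptotic shadowing forces $\mathcal{O}(u_{j})\to z$, and each $\mathcal{O}(u_{j})\in Y$ with $Y$ closed, so $z\in Y$. Conversely, given $a\in Y$ and $n$, choose $n'$ large enough that $\frac{1}{n'}$ is small, that the prefix $u^{k_{0}}_{y}\cdots u^{n'-1}_{y}$ of $w$ has length at least $n$, and that asymptotic shadowing is accurate on words this long; pick a cover point $x^{n'}_{m}$ with $d(a,x^{n'}_{m})<\frac{1}{n'}$, and let $u=u^{k_{0}}_{y}\cdots u^{n'-1}_{y}u^{n'}_{m}$. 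Because $u^{n'}_{y}$ is not a prefix of $u^{n'}_{m}$ the construction gives $\mathcal{O}(u)=x^{n'}_{m}$, while $u|_{n}=w|_{n}$; hence $d(f_{u}(x),a)<\frac{1}{n}$ and $a\in\omega_{F_{w}}(x)$. Therefore $Y\in\mathfrak{W}_{F_{w}}(X)$, completing the proof; the $IBT^{*}$ statement for free group actions is obtained identically, with the construction of Theorem~\ref{IBTgeneralgroup} in place of that of Theorem~\ref{IBTgeneralmonoid}.
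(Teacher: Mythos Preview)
Your proof is correct and follows exactly the approach the paper intends: invoke the monoid construction of Theorem~\ref{IBTgeneralmonoid} (just as the paper does with Theorem~\ref{CICTgeneral} for the preceding $CICT$ result), observe that the resulting pseudo orbit is an asymptotic $\delta$-$H$-pseudo orbit, apply weak asymptotic shadowing, and identify $Y$ with $\omega_{F_{w}}(x)$. Your explicit insistence on the forward-orbit extension $\mathcal{O}(u)=f_{v}(\mathcal{O}(u'))$ beyond a finite index set---paralleling Theorems~\ref{CICTgeneral} and~\ref{IBTgeneralgroup}---is in fact more careful than the paper's terse reference, since the literal recipe $\mathcal{O}(u)=\mathcal{O}_{m+1}(u')$ of Theorem~\ref{IBTgeneralmonoid} need not be asymptotic on words that branch off $w$ early.
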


%It is apparent that these arguments depend significantly on the existence of \emph{final} elements of an IBT set in order to construct an appropriate pseudo-orbit. It is however, not immediately evident that there are IBT sets which lack final elements. This leads to the following open questions.
%
%
%\begin{problem}
%	For an $ F $-action, if $ Y $ is IBT, compact, and invariant, does there necessarily exist some $ y \in Y $ which is $ i,j $-final?
%	For an $ H $-action, if $ Y $ is IBT, compact, and invariant, does there necessarily exist some $ y \in Y $ which is final?
%\end{problem}

\bibliographystyle{plain}
\bibliography{ComprehensiveBib}
\end{document}